\def\esp{\mathbb{E}}
\def\1{\mathbb{I}}
\newcounter{thm}[section]
\newcounter{appen}
\newtheorem{theor}[thm]{Theorem}
\newtheorem{cor}[thm]{Corollary}
\newtheorem{lem}[thm]{Lemma}
\newenvironment{proof}[1][Proof]{\noindent \textbf{#1.}
}{\rule{0.5em}{0.5em}}
\newtheorem{hp}{Assumption}
\begin{document}


\title{Testing the predictor effect on a functional response}

\author{Valentin Patilea\footnote{CREST (Ensai) \& IRMAR, France; patilea@ensai.fr. This author
gratefully acknowledges financial support from the Romanian National Authority for Scientific Research, CNCS-UEFISCDI, project
PN-II-ID-PCE-2011-3-0893.}\;\;\;\;\;\; C\'esar S\'anchez-Sellero\footnote{Facultad de Matem\'aticas, Univ. de Santiago de Compostela, Spain; cesar.sanchez@usc.es. This author
gratefully acknowledges support from the Spanish Ministry of Science, project MTM2008-03010, and from Ensai.}\;\;\;\;\;\; Matthieu Saumard\footnote{Pontificia Univ. Catol\'{i}ca de Valpara\'{i}so,
Chile; Matthieu.Saumard@gmail.com. This author gratefully acknowledges financial support fom CONICYT/FONDECYT project number 3140602.}}
\date{\today}
\maketitle

\begin{abstract}

{\small This paper examines the problem of nonparametric testing for the no-effect of a random  covariate (or predictor) on a functional response. This means testing whether the conditional expectation of the response given the covariate is almost surely zero or not, without imposing any model relating response and covariate. The covariate could be univariate, multivariate or functional.
Our test statistic is a quadratic form involving univariate nearest neighbor smoothing and the asymptotic critical values are given by the standard normal law. When the covariate is multidimensional or functional, a preliminary dimension reduction device is used which allows the effect of the covariate to be summarized into a univariate random quantity.
The test is able to detect not only linear but nonparametric alternatives. The responses could have conditional variance of unknown form and the law of the covariate does not need to be known. An empirical study with simulated and real data shows that the test performs well in applications.

\bigskip

\noindent \textbf{Keywords:} functional data regression, goodness-of-fit test, Karhunen-Lo\`eve decomposition,
nearest neighbor smoothing, $U-$statistics
}

\end{abstract}



\section{Introduction}

The analysis of functional data has captured  huge attention from the scientific community in recent years. One main reason is that functional data arise in many applications. Regression models for functional data are now common tools for  practitioners. As an illustration, in this paper we reconsider two extensively studied examples that lead to functional regression analysis and we provide new insight into the validity of commonly recommended models. The first, concerns the number of eggs laid daily by fruit flies, where the egg-laying trajectory for each fly is usually considered as a curve. Each fly is an individual, the egg-laying trajectory being its response that is modeled as a functional datum. See Chiou \emph{et al.} (2003) for a detailed description of this dataset. Another application will be the Canadian Weather data, intensively studied in Ramsay and Silverman (2005), where the daily rainfall throughout  the year, at each of 35 weather stations is considered as a curve to be explained through a regression model. Many other compelling examples can be found in the monographs of Ramsay and Silverman (2005) and Horv\'{a}th and Kokoszka (2012).

Several models have been designed for regression with functional response. A multiplicative effects model is proposed by Chiou \emph{et al.} (2003) for the egg-laying curves. Concurrent and functional linear models have been analysed by Ramsay and Silverman (2005) for the Canadian Weather dataset. For general functional responses, the functional linear model is the benchmark approach, see Chiou \emph{et al.} (2003), Yao, M\"{u}ller and Wang (2005), Gabrys, Horv\'{a}th and Kokoszka (2010) and the references therein. Recently, alternative nonparametric approaches have been considered; see Ferraty \emph{et al.} (2011), Lian (2011), Ferraty, Van Keilegom and Vieu (2012).

Checking for the goodness-of-fit of a model is an important step in the regression analysis. In a functional regression setup, aspects of the goodness-of-fit seem to be still open. In this paper a new method is proposed to check for the effect of a certain predictor covariate (or predictor) on a functional response. The statistical problem  is to build a  test of the null hypothesis \begin{equation}
H_{0} :\,\, \mathbb{E} \left( U | X \right) = 0 \quad
\mbox{
\rm almost surely (a.s.)} , \label{hh0}
\end{equation}
against the nonparametric alternative $\mathbb{P} [\mathbb{E} \left( U | X \right) = 0]<1$, where $U$ is a functional response, taking values in a separable Hilbert space $\mathcal{H}_1$, and $X$ being the covariate. Our method easily extends to the test of the goodness-of-fit of a regression model with functional response. To this end, it suffices to consider $U,$ the error term, in the mean regression model. In applications, the sample of $U$ is replaced by the residuals obtained from the model fit. \color{black} The predictor $X$ will be allowed to be an univariate or a multivariate or a functional variable taking values in a separable Hilbert space $\mathcal{H}_2$, possibly different from $\mathcal{H}_1$. In the case of egg-laying curves, the predictor will be the total number of eggs, which is a univariate random variable, while in the Canadian Weather dataset, the predictor will be the curve of the daily temperature throughout the year, at each weather station. When little is known about the structure of the data, it is preferable to allow for general alternatives when testing the goodness-of-fit. Moreover, when the link between the response and the predictor is modeled through a nonparametric approach, one should first check whether the predictor has an effect on the response or not.

To the best of our knowledge, only the contributions of Chiou and M\"{u}ller (2007) and Kokoszka \emph{et al.} (2008) have investigated the problem of goodness-of-fit with functional responses. Chiou and M\"{u}ller (2007) introduced diagnostics for the functional regression fit using plots of functional principal component (FPC) scores of the response and the covariate. They also used
residual versus fitted value FPC scores plots. (The FPC scores are the random coefficients in the Karhunen-Lo\`eve expansions.) It is easy to understand that such two-dimensional plots could not capture all types of effects of the covariate on the response, such as for instance, the effect of the interactions of the covariate FPC.
Kokoszka \emph{et al.} (2008) used the response and covariate FPC scores to build a test statistic with an $\chi^2$ distribution under the null hypothesis of the lack of dependence in the functional linear model. Again, by construction, the test of Kokoszka \emph{et al.} cannot detect any nonlinear alternative.

The goodness-of-fit or no-effect against \emph{nonparametric alternatives}, has been  very little explored in the functional data context. In the case of scalar response, Delsol, Ferraty and Vieu (2011) proposed a testing procedure adapted from the approach of H\"{a}rdle and Mammen (1993). Their procedure involves smoothing in the functional space and requires quite restrictive conditions. Patilea, S\'anchez-Sellero and Saumard (2012) and Garc\'{i}a-Portugu\'{e}s, Gonz\'{a}lez-Manteiga and Febrero-Bande (2012) have proposed alternative, nonparametric, goodness-of-fit tests for scalar response and functional covariate using  projections of the covariate. Such projection-based methods are less restrictive and perform well in applications. To the best of our  knowledge, no nonparametric statistical test of no-effect or goodness-of-fit is available when the response is functional.

The paper is organized as follows. In section \ref{section1} the proposed test for the univariate predictor is presented. The test is based on nearest neighbor smoothing. In section \ref{section2}, the test is extended to functional predictors, with a methodology based on projections, as in the spirit of Patilea, S\'anchez-Sellero and Saumard (2012). The functional predictor has to be decomposed in a Hilbert space basis, but we show that our results allow for a data-driven basis, for instance, that given by the functional principal component analysis. A wild bootstrap procedure for computing the critical values with finite samples is also proposed.
In section \ref{section3}, the proposed test is evaluated for simulated data, and used to test the goodness-of-fit of well-known models for the two real data examples mentioned above: the egg-laying trajectories of fruit-flies, and the Canadian Weather dataset.  We conclude that it performs well in applications and, in practice, is useful for model selection.  The proofs are postponed to the appendix.

\section{The  univariate predictor case}\label{section1}

For a clearer presentation and since this case is  important in its own right, we first consider the particular case of a univariate covariate (predictor) $X$.
Our approach seems to be the first one able to check the goodness-of-fit against general alternatives even in this particular case.
Given a sample $(U_1, X_1),\ldots,(U_n, X_n)$ from $(U, X)$, let
\begin{equation*}
Q_{n}=\frac{1}{n(n-1)}\sum\limits_{1\leq i\neq
j\leq n}\langle U_{i} , U_{j} \rangle \frac{1}{h}
K_{h}\left( F_n(X_{i})- F_n(X_{j}) \right),
\end{equation*}
where $\langle U_{i} , U_{j} \rangle$ is the inner product of $U_i$ and $U_j$ taking values in the Hilbert space $\mathcal{H}_1$, $K_{h}\left( \cdot \right) =K\left( \cdot /h\right) $,  $K(\cdot )$ is a kernel,  $h$ is the bandwidth, and $F_n$ is the empirical distribution function of the sample $X_1,\ldots,X_n$.

The statistic $Q_{n}$ is related to statistics considered by Fan and Li (1996) and Zheng (1996) for checks of parametric regressions for finite dimensional data. The main idea here is to replace the common products of responses (or residuals) for univariate response, by the inner products of the functional responses (or residuals).
While Fan and Li (1996) and Zheng (1996) used Nadaraya-Watson weights, symmetrized nearest neighbor weights, introduced by Yang (1981) and Stute (1984), are employed here.
It is well known that, in contrast to the Nadaraya-Watson estimator, the asymptotic variance of the nearest neighbor kernel estimator does not depend on the density of the covariate. This presents some advantages, especially in the case of a functional predictor that will be considered in the following.
Hence, our new statistic is more in the spirit of that introduced by Stute and Gonz\'alez-Manteiga (1996), to test simple linear models with scalar outcome and covariate and homoscedastic error term. Herein we allow for heteroscedasticity of unknown form and hence, in the particular case where $U$ and $X$ are scalar, we extend the framework of Stute and Gonz\'alez-Manteiga (1996).

Let $\|\cdot \|$ denote the norm associated with the inner product $\langle\cdot,\cdot\rangle$ and let us define
\begin{equation*}
Q=\mathbb{E}[\langle U, \mathbb{E}\{U\mid F(X) \}\rangle]= \mathbb{E}[\left\|\mathbb{E}\{U\mid F(X) \}\right\|^2],
\end{equation*}
which by construction is nonnegative.
The idea of the test comes from the fact that $H_0$ is true if and only if $Q=0.$
Then, it will follow from the theoretical results proven for the general covariate case that under $H_{0},$ the quantity $nh^{1/2}Q_{n}$ suitably standardized has asymptotic standard normal distribution. Meanwhile, when $H_0$ is not true  $Q_n$ will converge to a strictly positive $Q$ and this will guarantee consistency against any departure from $H_0$.
To standardize $Q_n$, the following simple variance estimator could be used
\begin{equation*}
\widehat{v}_{n}^{2}=\frac{2}{n(n-1)h}\sum\limits_{j\neq
i}\langle U_{i},U_{j}\rangle^2
K_{h}^{2}\left( F_{n}(X_{i})- F_{n}(X_{j}) \right).
\end{equation*}
Consequently, the test statistic we consider is
\begin{equation*}
T_{n} = n h^{1/2} \frac{Q_{n}}{\widehat{v}_{n}} \;
\end{equation*}
and the associated test of the asymptotic level $\alpha,$  is given by $\mathbb{I} \left(T_{n}\geq z_{1-a} \right) $, where $z_{1-a}$ is the $(1-a)$-th quantile of the standard normal distribution.

\section{The functional predictor case}\label{section2}

The approach introduced in the previous section is based on univariate smoothing and could not be immediately extended to multivariate or functional covariate $X.$ To reduce the case of \color{black}high-dimensional predictor \color{black} to univariate predictor, we use a new dimension reduction idea inspired by Lavergne and Patilea (2008).

\subsection{A dimension reduction lemma}\label{subsection1}

To simplify the presentation and without loss of generality, hereafter we focus on the case where the Hilbert spaces $\mathcal{H}_1$ and $\mathcal{H}_2$ are equal to $L^2[0,1],$ the space of square-integrable functions defined on $[0,1].$
Let
$\langle \cdot, \cdot \rangle$ denote the inner product in $L^2[0,1]$, that is
$$
\langle W_1, W_2 \rangle = \int_0^1 W_1(t) W_2(t) dt,\qquad \forall \; W_1,W_2 \in L^2[0,1].
$$
Let $\|\cdot\|$ be the associated norm. For the moment, let $\mathcal{R} = \{ \psi_1,\psi_2, \cdots \}$  be an arbitrarily  fixed orthonormal basis of the function space $L^2[0,1]$. The extension to a data-driven basis  is considered in section \ref{cesar_is_the_best3}.
Also without loss of generality, hereafter we suppose that $\mathbb{E}[X(t)]=0,$ $\forall t.$
Then the response and the predictor processes  can be expanded to give
\begin{equation}\label{basis_dec}
U(t) = \sum_{j=1}^\infty \langle  U, \psi_j \rangle \psi_j(t) \quad \text{and } \quad X(t) = \sum_{j=1}^\infty \langle  X, \psi_j \rangle \psi_j(t),\qquad t\in[0,1].
\end{equation}
For any integer $p\geq 1$  and any non random $\gamma = (\gamma_1, \cdots,\gamma_p) \in\mathbb{R}^p,$ let $$\langle X, \gamma \rangle = \sum_{i=1}^p \langle  X, \psi_j \rangle \gamma_j$$
and let $F_\gamma$ denote the distribution function (d.f.) of the real-valued variable $\langle X,\gamma \rangle$, \emph{i.e.,} $F_\gamma (y) = \mathbb{P}(\langle X,\gamma \rangle \leq y ),$ $\forall y\in\mathbb{R}.$
Moreover, let $\mathcal{S}^p =\{ \gamma\in\mathbb{R}^p: \|\gamma\|=1\}$ denote the unit hypersphere in $\mathbb{R}^p$.
Our approach relies on the following  extension of Lemma 2.1 of Lavergne and Patilea (2008) to Hilbert space-valued random variables.

\begin{lem}
\label{lem1} Let $U,X\in L^2[0,1]$  be random
functions. Assume that  $\mathbb{E} \| U \| <\infty $ and $\mathbb{E} ( U ) =0.$

(A)
The following statements are equivalent:

\begin{enumerate}
\item $\mathbb{E}(U \mid X)=0$ a.s.
\item $\mathbb{E}\left[\langle U, \mathbb{E}\left(U \mid \langle X,\gamma \rangle \right) \rangle \right]=0$ a.s. $
 \forall p \geq 1 , \forall \gamma \in\mathcal{S}^p.$
\item $\mathbb{E}\left[\langle U, \mathbb{E}\left\{U \mid F_{\gamma}(\langle X,\gamma \rangle) \right\} \rangle \right]=0$ a.s. $
 \forall p \geq 1 , \forall \gamma \in\mathcal{S}^p.$
\end{enumerate}

(B) Suppose, also, that there exists  $s>0$ such that
\begin{equation}\label{cond_x_cond}
\mathbb{E}(\|U\|\exp\{ s \|X\| \}) <\infty.
\end{equation}
If $\mathbb{P} [\mathbb{E} (U\mid X) = 0] <1$, there then exists an  integer $p_0>0$ such that   $\forall p\geq p_0$, the set
$$
\mathcal{A}_p = \{\gamma\in\mathcal{S}^p : \mathbb{E}(U \mid \langle X, \gamma \rangle )=0 \,\, a.s.\, \}
= \{\gamma\in\mathcal{S}^p : \mathbb{E}(U \mid F_\gamma (\langle X, \gamma \rangle) )=0 \,\, a.s.\, \}
$$
has Lebesgue measure zero on the unit hypersphere  $\mathcal{S}^p$  and is not dense.
\end{lem}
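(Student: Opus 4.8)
\noindent The plan is to prove part~(A) by a Cram\'er--Wold/Fourier-uniqueness argument combined with the martingale convergence theorem, and part~(B) by an analyticity argument in the spirit of Bierens (1990). In part~(A) the implications $1\Rightarrow 2$ and $1\Rightarrow 3$ are immediate from the tower property, since $\sigma(\langle X,\gamma\rangle)$ and $\sigma(F_\gamma(\langle X,\gamma\rangle))$ are sub-$\sigma$-algebras of $\sigma(X)$; and $2\Leftrightarrow 3$ holds because, for each fixed $\gamma\in\mathcal{S}^p$, the variables $\langle X,\gamma\rangle$ and $F_\gamma(\langle X,\gamma\rangle)$ generate the same $\sigma$-algebra up to $\mathbb{P}$-null sets ($F_\gamma$ is nondecreasing and injective on the essential range of $\langle X,\gamma\rangle$, a flat stretch being an event of probability zero, so its generalized inverse recovers $\langle X,\gamma\rangle$ almost surely); the same remark also yields the second description of the set $\mathcal{A}$ in part~(B). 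For $2\Rightarrow 1$ I would use the identity $\mathbb{E}[\langle U,\mathbb{E}(U\mid Z)\rangle]=\mathbb{E}\|\mathbb{E}(U\mid Z)\|^2$, valid for any conditioning variable $Z$, so that statement~2 says $\mathbb{E}(U\mid\langle X,\gamma\rangle)=0$ a.s.\ for all $p\geq 1$ and all $\gamma\in\mathcal{S}^p$; projecting on the basis $\mathcal{R}$ and using that conditional expectation commutes with the continuous functionals $\langle\cdot,\rho_j\rangle$ reduces this to $\mathbb{E}(u_j\mid\langle X,\gamma\rangle)=0$ a.s.\ for all $j\geq 1$, all $p$ and all $\gamma\in\mathcal{S}^p$, where $u_j=\langle U,\rho_j\rangle$ is integrable as $\mathbb{E}|u_j|\leq\mathbb{E}\|U\|<\infty$. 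Fixing $j$ and $p$, and noting $\{s\gamma:s\in\mathbb{R},\ \gamma\in\mathcal{S}^p\}=\mathbb{R}^p$, conditioning yields $\mathbb{E}[u_j\exp(i\langle X^{(p)},t\rangle)]=0$ for every $t\in\mathbb{R}^p$; writing $u_j=u_j^+-u_j^-$, the two finite measures $A\mapsto\mathbb{E}[u_j^{\pm}\,\mathbb{I}(X^{(p)}\in A)]$ then have equal Fourier transforms, hence coincide, so $\mathbb{E}(u_j\mid X^{(p)})=0$ a.s.; and since $\sigma(X^{(p)})$ increases to $\sigma(X)$, the martingale convergence theorem gives $\mathbb{E}(u_j\mid X)=0$ a.s., whence $\mathbb{E}(U\mid X)=\sum_j\mathbb{E}(u_j\mid X)\rho_j=0$ a.s.

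For part~(B) I would exploit \eqref{cond_x_cond} to pass to entire functions. For $j,p\geq 1$ set $G_{j,p}(z)=\mathbb{E}[u_j\exp(\langle X^{(p)},z\rangle)]$ for $z\in\mathbb{C}^p$, the inner product $\langle X^{(p)},z\rangle=\sum_{l=1}^p x_l z_l$ being extended bilinearly. Because $|u_j\exp(\langle X^{(p)},z\rangle)|\leq\|U\|\exp(\|z\|\,\|X\|)$ and $\mathbb{E}(\|U\|\exp(s\|X\|))<\infty$ for every $s>0$, dominated convergence and Morera's theorem show that $G_{j,p}$ is entire on $\mathbb{C}^p$, with everywhere-convergent Taylor expansion $G_{j,p}(z)=\sum_{k\geq 0}P_{j,p,k}(z)$ into homogeneous polynomials $P_{j,p,k}(z)=(k!)^{-1}\mathbb{E}[u_j\langle X^{(p)},z\rangle^k]$ of degree $k$. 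The crucial claim is the equivalence, for each $\gamma\in\mathcal{S}^p$, of the three conditions: $\mathbb{E}(u_j\mid\langle X,\gamma\rangle)=0$ a.s.; \ $G_{j,p}(s\gamma)=0$ for all $s\in\mathbb{R}$; \ $P_{j,p,k}(\gamma)=0$ for all $k\geq 0$. Here the first $\Rightarrow$ is plain conditioning (integrability from \eqref{cond_x_cond}); the reverse follows since $s\mapsto G_{j,p}(s\gamma)$ is entire and vanishes on $\mathbb{R}$, hence identically, so $\mathbb{E}[u_j\exp(is\langle X,\gamma\rangle)]=0$ for all $s\in\mathbb{R}$ and Fourier uniqueness applies; and the last equivalence is just that the power series $\sum_k s^k P_{j,p,k}(\gamma)$ in $s$ vanishes identically iff all its coefficients vanish. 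Consequently $\mathcal{A}=\bigcap_{j\geq 1}\mathcal{A}_j$, with $\mathcal{A}_j:=\{\gamma\in\mathcal{S}^p:\mathbb{E}(u_j\mid\langle X,\gamma\rangle)=0\ \text{a.s.}\}=\bigcap_{k\geq 0}\{\gamma\in\mathcal{S}^p:P_{j,p,k}(\gamma)=0\}$, so it suffices to make one $\mathcal{A}_j$ negligible.

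To finish, note that from $\mathbb{P}[\mathbb{E}(U\mid X)=0]<1$ together with $\mathbb{E}(U\mid X)=\sum_j\mathbb{E}(u_j\mid X)\rho_j$, some coordinate has $\mathbb{E}(u_j\mid X)\neq 0$ on a set of positive probability. By conditional Jensen the map $p\mapsto\mathbb{E}|\mathbb{E}(u_j\mid X^{(p)})|$ is nondecreasing and, by martingale convergence, increases to $\mathbb{E}|\mathbb{E}(u_j\mid X)|>0$, so there is a $p_0$ with $\mathbb{E}(u_j\mid X^{(p)})\neq 0$ with positive probability for all $p\geq p_0$. For such $p$, if $G_{j,p}$ vanished identically on $\mathbb{R}^p$ it would vanish on all of $\mathbb{C}^p$ (identity theorem), forcing $\mathbb{E}(u_j\mid X^{(p)})=0$ a.s.\ by Fourier uniqueness, a contradiction; hence $G_{j,p}\not\equiv 0$ and some coefficient $P_{j,p,k_0}$ is a non-zero homogeneous polynomial. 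By homogeneity it cannot vanish on the whole sphere, so its restriction to the connected real-analytic manifold $\mathcal{S}^p$ is a non-trivial real-analytic function, whose zero set has zero surface measure and empty interior. Since $\mathcal{A}\subseteq\mathcal{A}_j\subseteq\{\gamma\in\mathcal{S}^p:P_{j,p,k_0}(\gamma)=0\}$ (and the same for the $F_\gamma$-description via the $\sigma$-algebra identification), this proves the statement with that $p_0$.

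I expect the main obstacle to be the middle chain in part~(B): one must verify that \eqref{cond_x_cond} really makes $G_{j,p}$ entire --- so that one may continue analytically from the real axis to the imaginary axis and invoke Fourier uniqueness --- and must handle carefully the passage from the Hilbert-space-valued $U$ to its scalar coordinates $u_j$, in particular that the conditional expectation of a Bochner-integrable variable commutes with continuous linear functionals and that $\|\mathbb{E}(U\mid Z)\|^2=\sum_j|\mathbb{E}(u_j\mid Z)|^2$. A secondary subtlety is that the exceptional set must come out negligible on the sphere $\mathcal{S}^p$ rather than on $\mathbb{R}^p$, which is exactly what the homogeneity of the Taylor coefficients $P_{j,p,k}$ provides.
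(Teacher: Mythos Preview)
Your proof is correct and follows essentially the same skeleton as the paper's: in part~(A) you reduce to the scalar coordinates $u_j=\langle U,\rho_j\rangle$, use a Cram\'er--Wold/Fourier-uniqueness step to pass from conditioning on $\langle X,\gamma\rangle$ to conditioning on $X^{(p)}$, and then invoke Doob's martingale convergence to pass from $X^{(p)}$ to $X$; in part~(B) you again reduce to a single coordinate $u_j$, find $p_0$ via martingale convergence, and show the exceptional set is negligible. The paper does exactly this, but outsources the two key steps to external references (Lemma~2.1(A) of Lavergne and Patilea (2008) for the $\langle X,\gamma\rangle\leftrightarrow X^{(p)}$ equivalence, and Lemma~2.1 of Patilea, S\'anchez-Sellero and Saumard (2012) for the negligibility of $\mathcal{A}_j$), while you spell out self-contained arguments. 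Your Bierens-type analyticity argument---showing that $G_{j,p}$ is entire thanks to~\eqref{cond_x_cond}, extracting a nonzero homogeneous Taylor coefficient $P_{j,p,k_0}$, and using that a nontrivial real-analytic function on $\mathcal{S}^p$ has a nowhere-dense null zero set---is precisely what those cited lemmas encapsulate, so the two proofs are the same in substance. One cosmetic remark: your appeal to connectedness of $\mathcal{S}^p$ needs $p\geq 2$; for $p=1$ the sphere is $\{\pm 1\}$ and the conclusion is immediate since $\sigma(\langle X,\pm 1\rangle)=\sigma(x_1)=\sigma(X^{(1)})$.
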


Point (A) is a cornerstone for proving the behavior of our test under the null and the alternative hypotheses.  Point (B) shows that in applications it is rather easy  to find directions $\gamma$ able to reveal the failure of the null hypothesis (\ref{hh0}) since, under mild conditions, such directions represent almost all the points on the unit hyperspheres $\mathcal{S}^p$, provided $p$ is sufficiently large. By the Cauchy-Schwarz inequality, condition (\ref{cond_x_cond}) holds if $\mathbb{E}(\|U\|^c)<\infty$ for some $c>1$ and $\mathbb{E}(\exp\{ \varrho \|X\| \}) <\infty$ for $ \varrho = s c/(c-1).$ The exponential moment condition on $\|X\|$ is satisfied in many situations, for instance when  $X$ is a mean-zero Gaussian process with  $\sup_{t\in[0,1]} \mathbb{E} [X^2(t)]<\infty .$  Moreover, in general, moment restrictions on the covariate are not restrictive for goodness-of-fit testing purposes. Indeed, if $X$ does not satisfy condition (\ref{cond_x_cond}), it suffices to transform  $X$ into some variable  $W\in L^2[0,1]$ that generates the same  $\sigma-$field and satisfies (\ref{cond_x_cond}).

Lemma \ref{lem1} contains the case of a multivariate, finite dimension covariate $X$ as a particular case. It will be clear from the following how the testing procedure could be adapted to this situation and hence we focus on the more general case of a functional $X.$

Let
\begin{equation*}
Q(\gamma )=\mathbb{E}[\langle U\;, \mathbb{E}\{U\mid F_{\gamma} (\langle X,
\gamma\rangle) \}\rangle]
\end{equation*}
The following new formulation of $H_{0}$ is a direct consequences of Lemma \ref{lem1} above.

\begin{cor}
\label{charac} Consider a $L^2[0,1]-$valued random variable  $U$
such that  $\mathbb{E}\| U\| <\infty $.
The following statements are equivalent:
\begin{enumerate}
\item The null hypothesis $H_0$ in (\ref{hh0}) holds true.

\item\label{point_33}  $\forall p\geq 1$ and any set $B_p\subset\mathcal{S}^p$ with a strictly positive Lebesgue measure on $\mathcal{S}^p,$
\begin{equation}\label{test_a}
\max_{\gamma\in B_p}Q(\gamma )=0.
\end{equation}
\end{enumerate}
\end{cor}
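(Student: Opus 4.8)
The plan is to derive both implications directly from Lemma \ref{lem1}, the only preliminary being the elementary observation that $Q(\gamma)\ge 0$ for every $\gamma$. Indeed, writing $Z_\gamma=F_\gamma(\langle X,\gamma\rangle)$, the random function $\mathbb{E}(U\mid Z_\gamma)$ is $\sigma(Z_\gamma)$-measurable and square-integrable, so the tower property for Hilbert-space valued integrable variables (approximate $\mathbb{E}(U\mid Z_\gamma)$ by $\sigma(Z_\gamma)$-measurable step functions and use bilinearity and continuity of $\langle\cdot,\cdot\rangle$; see Parthasarathy (1967)) gives
$$
Q(\gamma)=\mathbb{E}\big[\langle U,\mathbb{E}(U\mid Z_\gamma)\rangle\big]=\mathbb{E}\big[\langle \mathbb{E}(U\mid Z_\gamma),\mathbb{E}(U\mid Z_\gamma)\rangle\big]=\mathbb{E}\big\|\mathbb{E}\{U\mid F_\gamma(\langle X,\gamma\rangle)\}\big\|_{L^2}^2\ \ge 0 .
$$
In particular $Q(\gamma)=0$ if and only if $\mathbb{E}\{U\mid F_\gamma(\langle X,\gamma\rangle)\}=0$ a.s., so that for any $p$ and any $B_p\subset\mathcal{S}^p$ the identity $\max_{\gamma\in B_p}Q(\gamma)=0$ holds if and only if $\mathbb{E}\{U\mid F_\gamma(\langle X,\gamma\rangle)\}=0$ a.s.\ for every $\gamma\in B_p$.

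For $(1)\Rightarrow(2)$ I would simply invoke part (A) of Lemma \ref{lem1}: under $H_0$ statement $1$ implies statement $3$, i.e.\ $Q(\gamma)=0$ for every $p\ge1$ and every $\gamma\in\mathcal{S}^p$; hence $\max_{\gamma\in B_p}Q(\gamma)=0$ for every $p$ and every $B_p$.

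For $(2)\Rightarrow(1)$ I would argue by contraposition, which is where part (B) of Lemma \ref{lem1} --- and, implicitly, the moment condition (\ref{cond_x_cond}), see the footnote there --- comes in. Assume $\mathbb{P}[\mathbb{E}(U\mid X)=0]<1$. By Lemma \ref{lem1}(B) there is a $p_0$ such that the set $\mathcal{A}=\{\gamma\in\mathcal{S}^{p_0}:\mathbb{E}(U\mid F_\gamma(\langle X,\gamma\rangle))=0\ \text{a.s.}\}$ has Lebesgue measure zero on $\mathcal{S}^{p_0}$. Then any $B_{p_0}\subset\mathcal{S}^{p_0}$ of strictly positive Lebesgue measure satisfies $B_{p_0}\setminus\mathcal{A}\ne\emptyset$, and for $\gamma^*\in B_{p_0}\setminus\mathcal{A}$ the nonnegative random variable $\|\mathbb{E}\{U\mid F_{\gamma^*}(\langle X,\gamma^*\rangle)\}\|_{L^2}^2$ is not almost surely zero, whence $Q(\gamma^*)>0$ by the first paragraph and $\max_{\gamma\in B_{p_0}}Q(\gamma)>0$. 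Thus $(2)$ fails, which is the contrapositive of $(2)\Rightarrow(1)$.

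No step here is a genuine obstacle --- the corollary is essentially bookkeeping on top of Lemma \ref{lem1}. The only two points worth writing out are the Hilbert-space tower-property computation giving $Q(\gamma)\ge0$ (so that $\max_{\gamma\in B_p}Q(\gamma)=0$ is equivalent to $Q$ vanishing on all of $B_p$), and making explicit that the \emph{only if} implication relies on part (B) of Lemma \ref{lem1} and hence on condition (\ref{cond_x_cond}).
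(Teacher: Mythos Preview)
Your proposal is correct and is exactly the argument the paper has in mind: the paper does not give a separate proof of the corollary but simply declares it ``a direct consequence of Lemma \ref{lem1},'' and your write-up is the natural elaboration --- $(1)\Rightarrow(2)$ from part (A), $(2)\Rightarrow(1)$ by contraposition from part (B). The identity $Q(\gamma)=\mathbb{E}\|\mathbb{E}\{U\mid F_\gamma(\langle X,\gamma\rangle)\}\|_{L^2}^2\ge 0$ that you single out is indeed the key bookkeeping step, and the paper already records it inside the proof of Lemma \ref{lem1}(A); your observation that the reverse implication tacitly uses condition (\ref{cond_x_cond}) (via the footnote) is a useful clarification the paper leaves implicit.
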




\label{sec3}

\setcounter{equation}{0}

\subsection{The test statistic with functional predictor}

In view of equation (\ref{test_a}), the goal is to estimate $Q(\gamma).$ Given a  sample  of $(U,X)$, let
\begin{equation*}
Q_{n}\left(\gamma \right) =\frac{1}{n(n-1)}\sum\limits_{1\leq i\neq
j\leq n}\langle U_{i} , U_{j} \rangle \frac{1}{h}
K_{h}\left( F_{\gamma,n}(\langle X_{i},\gamma\rangle)- F_{\gamma,n}(\langle X_{j},\gamma\rangle) \right),\quad \gamma\in\mathcal{S}^p,
\end{equation*}%
where  $K_{h}\left( \cdot \right) =K\left( \cdot /h\right) $,  $K(\cdot )$
is a kernel,  $h$ the bandwidth, and $F_{\gamma,n}$ is the empirical d.f. of the sample $\langle X_1,\gamma\rangle,\cdots,\langle X_n,\gamma\rangle$. \color{black}Ties in the values $\langle X_{i},\gamma\rangle,$ $1\leq i\leq n,$ could be broken by comparing indices, that is if $\langle X_{i},\gamma\rangle = \langle X_{j},\gamma\rangle,$ then we define $F_{\gamma,n}(\langle X_{i},\gamma\rangle)< F_{\gamma,n}(\langle X_{j},\gamma\rangle)$ if $i<j.$ However, for simplicity in our assumptions below we will assume that the $\langle X_{i},\gamma\rangle' $s have a continuous distribution for all $\gamma.$ \color{black}

The statistic $Q_{n}\left(\gamma \right)$ is related to the statistic considered by Patilea, S\'anchez-Sellero and Saumard (2012) who used a Nadaraya-Watson regression estimator instead of the  nearest neighbor (NN) approach. \color{black}
Since the asymptotic variance of the NN kernel estimator does not depend on the density of the covariate, in our case the covariate is $\langle X,\gamma\rangle,$ one could more confidently use the same bandwidths $h$ for any $\gamma$ to define $Q_n(\gamma).$
\color{black}
The projections of the covariates were also considered by Lavergne and Patilea (2008); see also Cuesta-Albertos \emph{et al.} (2007), Cuesta-Albertos, Fraiman and Ransford (2007). The extension of the scope  to functional responses seems to be new. As in the univariate predictor case,  we allow for heteroscedasticity of unknown form.

Under $H_{0}$, by the Central Limit Theorem (CLT) for degenerate $U-$statistics,
for  fixed $p$ and $\gamma\in\mathcal{S}^p$,  $nh^{1/2}Q_{n}\left(\gamma \right)$ has an
asymptotic centered normal distribution.
Here we use the CLT in Theorem 5.1 in de Jong (1987).
We will show de Jong CLT still applies and the asymptotic normal distribution is preserved even when $p$ grows at a suitable rate with the sample size. On the other hand, Lemma \ref{lem1}-(B) indicates that if $p$ is sufficiently large, the maximum of $Q\left(\gamma\right)$ over $\gamma$  stays away from zero under the alternative hypothesis and this will guarantee consistency against any departure from $H_{0}$.

The statistic $Q_{n}(\gamma)$ is expected to be close to $Q(\gamma  )$ uniformly in $\gamma$, provided $p$ increases suitably. Then a natural idea would be to build a test statistic using the maximum of $Q_{n}(\gamma)$ with respect to $\gamma$. However, as in the finite dimension covariate case, under $H_0$ one expects $Q_{n}(\gamma)$ to converge to zero for any $p$ and $\gamma$ and thus the objective function of the maximization problem to be flat.
Therefore we will choose a direction $\gamma$ as the least favorable direction for the null hypothesis $H_0$ obtained from a penalized criterion based on a standardized version of $Q_{n}\left(\gamma \right)$; see also Lavergne and Patilea (2008) for related approaches.
More precisely, let us fix some infinite-dimensional vector $(b_{01},b_{02},\cdots)\in \mathbb{R}^\infty$  with $\sum_{j=1}^\infty b_{0j}^2 <\infty.$ Such a vector could be interpreted as an initial \emph{guess} of an unfavorable direction for $H_0.$
For any given $p\geq 1$ such that $\sum_{j=1}^p b_{0j}^2 >0$, let
$$
\gamma_0^{(p)} = \frac{(b_{01},\cdots,b_{0p})}{\| (b_{01},\cdots,b_{0p})\|} \in\mathcal{S}^p\;,
$$
where here $\|\cdot \|$ denotes the norm in $\mathbb{R}^p$.

Let
\begin{equation}
\widehat{v}_{n}^{2}( \gamma) =\frac{2}{n(n-1)h}\sum\limits_{j\neq
i}\langle U_{i},U_{j}\rangle^2
K_{h}^{2}\left( F_{\gamma,n}(\langle X_{i},\gamma\rangle)- F_{\gamma,n}(\langle X_{j},\gamma\rangle) \right),
\label{var_est_v_n}
\end{equation}
be an estimate of the
variance of $nh^{1/2}Q_{n}(\gamma) ,$ $\gamma\in\mathcal{S}^p$. Given  $B_p\subset \mathcal{S}^p$ with positive Lebesgue measure in $\mathcal{S}^p$ and which contains $\gamma_0^{(p)}$,  the least favorable direction $\gamma$ for $H_0$ is defined by
\begin{equation}  \label{bet}
\widehat{\gamma }_{n} = \arg \max_{\gamma\in B_p} \left[ n h
^{1/2} Q_{n}(\gamma)/\widehat{v}_{n}(\gamma) - \alpha _{n} \mathbb{I}_{\left\{
\gamma \neq \gamma_0^{(p)}  \right\}} \right] \;,
\end{equation}
where $\mathbb{I}_A$ is the indicator function of a set $A$,  and $\alpha_n$, $n\geq 1$ is a sequence of positive real numbers decreasing to zero at an appropriate rate, which depends on the rates of $h$ and $p$ and will be made explicit below. Using a standardized version of $Q_{n}(\gamma)$ avoids scaling $\alpha_n$ according to the variability of the observations.
Let us note that the maximization used to define $\widehat{\gamma }_{n}\in B_p \subset \mathcal{S}^p$ is a finite dimension optimization problem. The choice of $\gamma_0^{(p)}$ will be shown to be theoretically irrelevant. It will not affect the asymptotic critical values and the consistency results. Practical aspects related to the choice of $\gamma_0^{(p)}$ and $B_p $ will be discussed in section \ref{cesar_is_the_best3}.

We will prove that with  suitable rates of increase for $\alpha_n$ and $p$ and decrease for $h$, the probability of the event $\{
\widehat{\gamma}_{n} = \gamma_{0}^{(p)}\}$ tends to 1 under $H_{0}$.
Hence $Q_{n}(\widehat{\gamma}_{n})/\widehat{v}_{n}(\widehat{\gamma})$ behaves asymptotically as $
Q_{n}(\gamma_{0}^{(p)})/\widehat{v}_{n}(\gamma_0^{(p)})$, even when $p$ grows with the sample size.
Therefore the test statistic we consider is
\begin{equation}\label{test_stat}
T_{n} = n h^{1/2} \frac{Q_{n}(\widehat{\gamma}_{n})}{
\widehat{ v}_{n} (\widehat{\gamma}_{n})} \; .
\end{equation}
We will show that an asymptotic $a$-level test is given by $\mathbb{I} \left(
T_{n}\geq z_{1-a} \right) .$

\subsection{Behavior under the null hypothesis}\label{beh_null_hyp}
\setcounter{hp}{3}

\begin{hp}
\label{D}
\hspace{-0.1cm}
\begin{enumerate}
\item[(a)]
The random vectors $(U_{1},X_{1})
,\ldots ,(U_{n},X_{n})$ are independent draws from the random vector $(U,X)\in L^2[0,1] \times L^2[0,1]$ that satisfies  $\mathbb{E}\| U \|^{8}<\infty ;$ moreover, $\exists \varrho >0$ such that $\mathbb{E}[\exp(\varrho\|X\|)]<\infty.$

\item[(b)] For any $p \geq 1$ and any $\gamma \in \mathcal{S}^p$, the d.f. $F_\gamma $ is continuous.

\item[(c)]
$\exists \ \underline{\sigma}^{2},$ $C_1, C_2 >0$ and $\nu >2$
such that:
\begin{enumerate}
\item[(i)] $0 < \underline{\sigma}^{2} \leq \mathbb{E}(\langle U_{1},U_{2}\rangle^2 \mathbb{I}_{\{ |\langle U_{1},U_{2}\rangle |\leq C_1 \}} \mid X_{1},X_{2})$ almost surely;
\item[(ii)] $ \mathbb{E}\left[\Vert U\Vert^{\nu} \mid X \right] \leq C_2$.
\end{enumerate}

\item[(d)] For any $p \geq 1$,  $ \gamma_0^{(p)} \in B_p\subset\mathcal{S}^p,$  $B_p$ are open subsets of $\mathcal{S}^p$ and  $B_p\times 0_{p^{\,\prime} - p} \subset B_{p^{\,\prime}},$ $\forall 1\leq p< p^{\,\prime}$ where $0_p\in\mathbb{R}^p$
denotes the null vector of dimension $p$.
\label{ass_dc}
\end{enumerate}
\end{hp}

The continuity condition  in Assumption \ref{D}-(b) is a mild assumption that simplifies the NN smoothing.
Assumption \ref{D}-(c)  serves to prove that the variance of $Q_n(\cdot)$ is uniformly bounded away from zero and infinity. The mild conditions  on  $B_p$ simplify the proofs for the consistency and   are satisfied, for instance, when $B_p$ is a half unit hypersphere.

\setcounter{hp}{10}
\begin{hp}
\label{K} \hspace{-0.1cm}
\begin{enumerate}
\item[(a)] The kernel $K$ is a continuous  density on the real line such that $K(x) = K(-x)$ and $K(\cdot)$ is non increasing on $[0,\infty)$. Moreover the Fourier Transform of $K$ is integrable.
\item[(b)] $h\rightarrow 0$ and $nh^{2} \rightarrow
\infty.$

\item[(c)] $p\geq 1$ increases to infinity with $n$ and there exists a constant $\lambda>0$ such that $p\ln^{-\lambda} n $ is  bounded.
\end{enumerate}
\end{hp}

The first step for deriving a test statistic is the study of the behavior of the process $Q_{n}(\gamma),$ $\gamma\in B_p$, under $H_{0}$ when $p$  increases with the sample size. This study is greatly simplified by the fact that for a fixed $n,$ up to permutations of lines and/or columns, the matrix with entries
$K_h (F_{\gamma,n} (\langle X_i,\gamma\rangle) - F_{\gamma,n} (\langle X_j,\gamma\rangle)),$ $1\leq i,j\leq n,$ is equal to that with entries matrix $K_h ((i-j)/nh),$ $1\leq i,j\leq n,$  for any dimension $p$ and direction $\gamma$.

\begin{lem}
\label{leem1}  Under  Assumptions \ref{D} and \ref{K} and if $H_0$ holds true,
$$\sup_{\gamma \in B_p\subset\mathcal{S}^p}|Q_{n}(\gamma)|= O_{\mathbb{P}
}(n^{-1}h^{-1/2} p \ln n) .$$ Moreover, if $\widehat v_n^2 (\gamma)$ is the estimate defined in equation (\ref{var_est_v_n}),
\begin{equation*}
\sup_{\gamma \in B_p\subset\mathcal{S}^p} \{1/\widehat v_n^2 (\gamma)\} = O_{\mathbb{P}}(1).
\end{equation*}
\end{lem}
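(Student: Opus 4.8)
The plan is to control the two suprema separately, in both cases reducing an over-the-sphere estimate to a finite combinatorial problem via Cover's theorem. First I would treat $\sup_{\gamma}|Q_n(\gamma)|$. The key observation is that $Q_n(\gamma)$ depends on $\gamma$ only through the relative ordering of the scalar products $\langle X_1,\gamma\rangle,\dots,\langle X_n,\gamma\rangle$: the NN-smoothing argument $F_{\gamma,n}(\langle X_i,\gamma\rangle)-F_{\gamma,n}(\langle X_j,\gamma\rangle)$ is, up to the normalization $1/n$, the difference of ranks, and $K_h(\cdot/h)$ evaluated there is a deterministic function of that rank-difference. Since the hyperplanes $\{\gamma:\langle X_i-X_j,\gamma\rangle=0\}$, $1\le i<j\le n$, partition $\mathcal{S}^p$ into at most $C(N,p)=2\sum_{k=0}^{p-1}\binom{N-1}{k}$ cells (with $N=\binom n2$) by Cover (1967), and $Q_n$ is constant on each cell, we have
\[
\sup_{\gamma\in B_p}|Q_n(\gamma)| = \max_{1\le \ell\le C(N,p)} |Q_n(\gamma_\ell)|,
\]
a maximum over polynomially-many (in $n$, for $p=O(\ln^\lambda n)$) fixed points, one $\gamma_\ell$ picked from each cell.

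For each fixed $\gamma_\ell$, $n h^{1/2}Q_n(\gamma_\ell)$ is a degenerate $U$-statistic of order two under $H_0$ (degeneracy because $\mathbb{E}(U\mid X)=0$ kills the linear projection), with kernel $\langle U_i,U_j\rangle h^{-1}K_h(\text{rank difference})$. Here I would invoke an exponential (Bernstein-type) inequality for degenerate $U$-statistics — e.g. the bounds in Giné, Latała and Zinn or the truncation-plus-Bernstein device applied together with moment condition \ref{D}(a) ($\mathbb{E}\|U\|^8<\infty$) and $h\to0$, $nh^2\to\infty$ — to get $\mathbb{P}(nh^{1/2}|Q_n(\gamma_\ell)|>t)\le C\exp(-c\,t/(\text{something}))$ or a polynomial tail sharp enough that, after a union bound over the $C(N,p)=O(n^{2p})$ cells and choosing $t \asymp p\ln n$, the total probability is $o(1)$. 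The $p\ln n$ factor is exactly the price of the union bound: $\ln C(N,p)=O(p\ln n)$. This yields $\sup_\gamma nh^{1/2}|Q_n(\gamma)|=O_{\mathbb{P}}(p\ln n)$, i.e. the claimed rate.

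For the second statement, $\widehat v_n^2(\gamma)$ is likewise constant on each Cover cell, so $\inf_{\gamma\in B_p}\widehat v_n^2(\gamma)=\min_\ell \widehat v_n^2(\gamma_\ell)$. Writing $\widehat v_n^2(\gamma)$ as a (non-degenerate) $U$-statistic with nonnegative kernel $\langle U_i,U_j\rangle^2 h^{-1}K_h^2(\cdot)$, its expectation is bounded below by a positive constant uniformly in $\gamma$: this is where Assumption \ref{D}(c)(i) enters — truncating $\langle U_1,U_2\rangle$ at $C_1$ and using the conditional lower bound $\underline\sigma^2$, together with $K(\cdot)$ being a continuous symmetric density (so $\int K^2>0$) and the fact that, conditionally on the ranks being within an $O(h)$ window, a positive fraction of index pairs contribute. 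Then I would show $\sup_\gamma|\widehat v_n^2(\gamma)-\mathbb{E}\widehat v_n^2(\gamma)|=o_{\mathbb{P}}(1)$ by the same Cover-cells-plus-union-bound-plus-$U$-statistic-concentration argument as above (the moments needed, $\mathbb{E}\|U\|^8<\infty$, are precisely why the eighth moment is assumed), whence $\inf_\gamma\widehat v_n^2(\gamma)$ is bounded away from $0$ in probability, giving $\sup_\gamma 1/\widehat v_n^2(\gamma)=O_{\mathbb{P}}(1)$.

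The main obstacle I anticipate is making the exponential/concentration inequality for the degenerate $U$-statistic $nh^{1/2}Q_n(\gamma_\ell)$ sharp enough that the union bound over $O(n^{2p})$ cells survives with only a $p\ln n$ inflation of the rate — i.e. one genuinely needs sub-exponential (not merely polynomial) tails in the deviation variable, which in turn forces a careful truncation of $\langle U_i,U_j\rangle$ (using the $\nu>2$ moment in \ref{D}(c)(ii) or the eighth moment in \ref{D}(a)) to handle the heavy-tailed part separately, plus a Hoeffding-type decomposition to isolate the genuinely degenerate second-order term from lower-order remainders. Controlling the dependence of all constants on $h$ (which shrinks) uniformly over cells is the delicate bookkeeping; everything else is a routine, if lengthy, combination of Cover's lemma and standard $U$-statistic machinery.
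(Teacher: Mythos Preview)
Your proposal is correct and follows essentially the same route as the paper: reduce the supremum to a finite maximum via Cover's (1967) combinatorial bound on linearly inducible orderings, then apply the Gin\'e--Lata{\l}a--Zinn exponential inequality to the truncated degenerate $U$-statistic (conditionally on the covariates) and a union bound, with a separate argument that the truncation remainder is negligible; for the variance, the same cell reduction plus Hoeffding's inequality and Assumption~\ref{D}(c)(i). Two minor remarks: the paper uses Cover's sharper count $q(n,p)\le n^{p+1}$ for orderings rather than the general hyperplane-arrangement bound with $N=\binom n2$ hyperplanes (both give $\ln(\text{cells})=O(p\ln n)$, so your rate is unaffected), and the Hoeffding decomposition you anticipate is unnecessary because under $H_0$ the conditional $U$-statistic is already fully degenerate.
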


We next describe the behavior of $\widehat{\gamma }_{n}$ under $H_{0}$. A suitable rate $\alpha_n$ will make $\widehat{ \gamma}_{n}$ to be equal to $\gamma_{0}^{(p)}$ with high probability. Under the null, $\alpha_n$ has to grow to infinity sufficiently fast to render the probability of the event $\{ \widehat{ \gamma}_{n} = \gamma_{0}^{(p)} \}$ close to 1. We will see below that, for better detection of  the alternative hypothesis,  $\alpha_n$ should grow as slowly as possible. Indeed, slower rates for $\alpha_n$ will allow directions $\hat\gamma_n$  to be selected which could be better suited than $\gamma_0^{(p)}$ for revealing the departure from the null hypothesis. The rate of $p$ is also involved in the search of a trade-off for the rate of $\alpha_n$: a larger $p$ renders  the rate of uniform convergence to zero of $Q_n (\gamma)$, $\gamma\in B_p$ slower, and hence requires a larger $\alpha_n$.

\begin{lem}
\label{beta} Under Assumptions \ref{D}, \ref{K}, for a positive sequence
$\alpha _{n} $, $n\geq 1$ such that $\alpha _{n}p^{-1}\ln^{-1} n \rightarrow
\infty$,  $$\mathbb{P}(\widehat{ \gamma}_{n} = \gamma_{0}^{(p)})
\rightarrow 1, \quad \text{ under } H_{0}.$$
\end{lem}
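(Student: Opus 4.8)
\emph{Proof strategy.} The plan is to show that, with probability tending to one, no direction $\gamma\in B_p$ other than $\gamma_0^{(p)}$ can overcome the penalty $\alpha_n$ in the criterion (\ref{bet}). Write the unpenalized part of the objective as
\begin{equation*}
\ell_n(\gamma) \;=\; n h^{1/2}\,\frac{Q_n(\gamma)}{\widehat v_n(\gamma)},\qquad \gamma\in B_p,
\end{equation*}
so that $\widehat\gamma_n$ maximizes $\ell_n(\gamma) - \alpha_n\mathbb{I}_{\{\gamma\neq\gamma_0^{(p)}\}}$ over $B_p$. First I would record the elementary inclusion of events
\begin{equation*}
\Big\{\, \sup_{\gamma\in B_p}\ell_n(\gamma) \;-\; \ell_n(\gamma_0^{(p)}) \;<\; \alpha_n \,\Big\} \;\subset\; \big\{\, \widehat\gamma_n = \gamma_0^{(p)} \,\big\}.
\end{equation*}
Indeed, on the left-hand event every $\gamma\neq\gamma_0^{(p)}$ satisfies $\ell_n(\gamma)-\alpha_n \le \sup_{\gamma'\in B_p}\ell_n(\gamma') - \alpha_n < \ell_n(\gamma_0^{(p)})$, so the penalized criterion at $\gamma_0^{(p)}$ (where the indicator vanishes) strictly exceeds its value at every competitor; in particular the supremum defining $\widehat\gamma_n$ is attained precisely at $\gamma_0^{(p)}$, which also settles on that event any question about existence of the maximizer over the open set $B_p$. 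It therefore suffices to show that the probability of the left-hand event converges to one.

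Next I would invoke Lemma \ref{leem1} to control $\sup_{\gamma\in B_p}|\ell_n(\gamma)|$. Since $\widehat v_n(\gamma)>0$, under $H_0$
\begin{equation*}
\sup_{\gamma\in B_p}|\ell_n(\gamma)| \;\le\; n h^{1/2}\Big(\sup_{\gamma\in B_p}|Q_n(\gamma)|\Big)\Big(\sup_{\gamma\in B_p}\widehat v_n(\gamma)^{-1}\Big) \;=\; n h^{1/2}\cdot O_{\mathbb{P}}\!\big(n^{-1}h^{-1/2}p\ln n\big)\cdot O_{\mathbb{P}}(1) \;=\; O_{\mathbb{P}}(p\ln n),
\end{equation*}
where I used $\sup_{\gamma\in B_p}\widehat v_n(\gamma)^{-1} = \big(\sup_{\gamma\in B_p}\widehat v_n(\gamma)^{-2}\big)^{1/2} = O_{\mathbb{P}}(1)$ from Lemma \ref{leem1}. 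Since $\gamma_0^{(p)}\in B_p$ by Assumption \ref{D}(d), the same bound applies to $|\ell_n(\gamma_0^{(p)})|$, whence
\begin{equation*}
0 \;\le\; \sup_{\gamma\in B_p}\ell_n(\gamma) - \ell_n(\gamma_0^{(p)}) \;\le\; 2\sup_{\gamma\in B_p}|\ell_n(\gamma)| \;=\; O_{\mathbb{P}}(p\ln n).
\end{equation*}

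Finally, the assumed rate $\alpha_n p^{-1}\ln^{-1} n\to\infty$ is exactly $p\ln n = o(\alpha_n)$, so a quantity that is $O_{\mathbb{P}}(p\ln n)$ is $o_{\mathbb{P}}(\alpha_n)$; hence $\mathbb{P}\big(\sup_{\gamma\in B_p}\ell_n(\gamma) - \ell_n(\gamma_0^{(p)}) < \alpha_n\big)\to 1$, and the inclusion above gives $\mathbb{P}(\widehat\gamma_n = \gamma_0^{(p)})\to 1$ under $H_0$. I do not expect a genuine obstacle here: all the nontrivial work, namely the uniform-in-$\gamma$ (and in $p$) control of $Q_n$ and of $\widehat v_n^{-2}$, has already been carried out in Lemma \ref{leem1} through Cover's combinatorial bound and exponential inequalities for degenerate $U$-statistics. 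The only mild point of care is reading the definition (\ref{bet}) of $\widehat\gamma_n$ as an exact (or, if one prefers, an $o_{\mathbb{P}}(\alpha_n)$-approximate) maximizer; as noted, the displayed inclusion of events is robust to this, so it is harmless.
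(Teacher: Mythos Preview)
Your argument is correct and is precisely the natural route: reduce to the event $\{\sup_{\gamma\in B_p}\ell_n(\gamma)-\ell_n(\gamma_0^{(p)})<\alpha_n\}$, then feed in the uniform rates from Lemma~\ref{leem1} and the assumed growth of $\alpha_n$. The paper itself omits the proof, noting only that it is similar to Lemma~3.2 in Lavergne and Patilea (2008); your write-up is essentially a clean instantiation of that same argument, so there is nothing to compare.
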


The following result shows that the asymptotic critical values of our test statistic are standard normal.

\begin{theor}
\label{as_law} Under the conditions of Lemma \ref{beta} and if the hypothesis $H_0$ in (\ref{hh0}) holds true, the  test statistic $
T_{n} $ converges in law to a standard normal. Consequently, the test given by $\mathbb{I}(T_n \geq z_{1- a})$, with $z_a$  the $(1- a)-$quantile of the standard normal distribution,  has an asymptotic level  $ a.$
\end{theor}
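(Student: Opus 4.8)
The plan is to combine the three preceding lemmas with a central limit theorem for degenerate $U$-statistics. The key structural observation is that Lemma~\ref{beta} reduces the problem to a \emph{fixed}-direction statement: since $\mathbb{P}(\widehat\gamma_n = \gamma_0^{(p)}) \to 1$ under $H_0$, we have $T_n = nh^{1/2} Q_n(\gamma_0^{(p)})/\widehat v_n(\gamma_0^{(p)}) + o_\mathbb{P}(1)$, so it suffices to show that this latter quantity converges in law to $\mathcal N(0,1)$. Note, however, that $\gamma_0^{(p)}$ still changes with $n$ as $p\to\infty$, so we cannot literally invoke a single fixed-$\gamma$ CLT; the argument must be uniform (or at least robust) with respect to the direction along the sequence $\gamma_0^{(p)}$.

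First I would write $nh^{1/2}Q_n(\gamma_0^{(p)}) = \sum_{1\le i\neq j\le n} W_{n,ij}$ where $W_{n,ij} = (h^{-1/2}/(n-1))\langle U_i,U_j\rangle K_h(F_{\gamma,n}(\langle X_i,\gamma\rangle) - F_{\gamma,n}(\langle X_j,\gamma\rangle))$ with $\gamma = \gamma_0^{(p)}$. Under $H_0$, since $\mathbb{E}(U\mid X)=0$ and the sample is i.i.d., one checks that $\mathbb{E}(W_{n,ij}\mid (U_i,X_i))=o_\mathbb{P}$-negligible terms, so after replacing the empirical d.f.\ $F_{\gamma,n}$ by the true $F_\gamma$ (a standard approximation step controlled using the uniform Glivenko--Cantelli rate and the smoothness/monotonicity of $K$ from Assumption~\ref{K}(a)), the statistic is, up to negligible error, a completely degenerate (clean) $U$-statistic. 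I would then verify the Lindeberg-type conditions of de~Jong's (1987) Theorem~5.1: one needs the normalized variance to converge, the fourth-moment-type quantity $\mathbb{E}[G_{n,ij}^4]$ and the ``$G_I$, $G_{II}$'' cross terms in de~Jong's notation to be $o(1)$ relative to the square of the variance. The moment condition $\mathbb{E}\|U\|^8<\infty$ in Assumption~\ref{D}(a) is exactly what is needed to bound these fourth-order terms, and the kernel being a bounded density with $nh^2\to\infty$ controls the $h$-powers that appear.

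Next I would pin down the variance normalization. The asymptotic variance of $nh^{1/2}Q_n(\gamma_0^{(p)})$ is $v^2(\gamma_0^{(p)}) := 2 h^{-1}\mathbb{E}[\langle U_1,U_2\rangle^2 K_h^2(F_\gamma(\langle X_1,\gamma\rangle) - F_\gamma(\langle X_2,\gamma\rangle))] + o(1)$, which by a change of variables (using that $F_\gamma(\langle X,\gamma\rangle)$ is uniform on $[0,1]$ when $F_\gamma$ is continuous, Assumption~\ref{D}(b)) equals $2\big(\int K^2\big)\,\mathbb{E}[\langle U_1,U_2\rangle^2 \mid F_\gamma(\langle X_1,\gamma\rangle)=F_\gamma(\langle X_2,\gamma\rangle)] + o(1)$, and Assumption~\ref{D}(c)(i) keeps this bounded away from zero while \ref{D}(c)(ii) and \ref{D}(a) keep it bounded above, uniformly in the direction. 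Then I would show $\widehat v_n^2(\gamma_0^{(p)})/v^2(\gamma_0^{(p)}) \cvp 1$; this is a law-of-large-numbers statement for the $U$-statistic in (\ref{var_est_v_n}), again handled by a first/second moment computation together with the empirical-d.f.\ replacement, and Lemma~\ref{leem1} already guarantees $1/\widehat v_n^2$ is $O_\mathbb{P}(1)$ uniformly, so no degeneracy issue arises. Combining the CLT for the numerator, Slutsky with the consistent variance estimator, and the event $\{\widehat\gamma_n=\gamma_0^{(p)}\}$ having probability tending to one gives $T_n \cvd \mathcal N(0,1)$, whence the level statement for $\mathbb{I}(T_n\ge z_{1-a})$ is immediate.

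The main obstacle I anticipate is making the de~Jong CLT and the variance-ratio convergence genuinely uniform along the drifting direction $\gamma_0^{(p)}$ as $p\to\infty$ — in particular, replacing $F_{\gamma_0^{(p)},n}$ by $F_{\gamma_0^{(p)}}$ with an error that is negligible uniformly over the increasing-dimensional sphere, and controlling the fourth-order degenerate terms with constants that do not blow up with $p$. This is precisely where Cover's (1967) combinatorial bound on the number of sign patterns (orderings) of the projections — already exploited in Lemma~\ref{leem1} — and the rate restriction $p\ln^{-\lambda}n$ bounded in Assumption~\ref{K}(c) must be brought to bear, so that union bounds over an $\varepsilon$-net of directions cost only a $p\ln n$ factor that is absorbed by the polynomial decay of the $U$-statistic tails. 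Everything else is a fairly routine, if lengthy, moment bookkeeping exercise.
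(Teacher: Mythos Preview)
Your high-level architecture is right --- reduce via Lemma~\ref{beta} to the direction $\gamma_0^{(p)}$, then invoke de~Jong's CLT for the numerator and establish consistency of $\widehat v_n^2$. But you are missing the structural feature of the NN construction that makes the paper's proof short and that dissolves precisely the obstacle you anticipate. Because the statistic is built from $F_{\gamma,n}(\langle X_i,\gamma\rangle)$, the kernel is evaluated at differences of \emph{ranks}: for any $\gamma$ and any realization of the covariates, the values $\{nF_{\gamma,n}(\langle X_i,\gamma\rangle):1\le i\le n\}$ are a permutation of $\{1,\ldots,n\}$. Consequently the matrix $\big(K_h(F_{\gamma,n}(\langle X_i,\gamma\rangle)-F_{\gamma,n}(\langle X_j,\gamma\rangle))\big)_{i,j}$ coincides, up to a simultaneous row--column permutation, with the fixed matrix $\big(K((i-j)/nh)\big)_{i,j}$, \emph{independently of $\gamma$ and of $X_1,\ldots,X_n$}. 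The paper exploits this by applying de~Jong's Theorem~5.1 \emph{conditionally on $X_1,\ldots,X_n$}: the kernel weights become non-random and direction-free, so the verification of de~Jong's Conditions~1--3 reduces to elementary bounds on $\sum_{j\ne i}K^2((i-j)/nh)$ and on the spectral norm of $(K((i-j)/nh))_{i,j}$, supplied by Lemma~\ref{db_integr}. The conditional-moment bounds in Assumption~\ref{D}(c) then control the $\langle U_i,U_j\rangle$ factors uniformly. There is no ``drifting direction'' issue whatsoever, and no uniformity argument over $\mathcal S^p$ is needed here.

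Your proposed route --- replacing $F_{\gamma_0^{(p)},n}$ by the true $F_{\gamma_0^{(p)}}$ via Glivenko--Cantelli and then controlling errors uniformly over an $\varepsilon$-net using Cover's bound --- is not only unnecessary but problematic under the stated hypotheses. Assumption~\ref{K}(a) does \emph{not} impose a Lipschitz condition on $K$ (that extra hypothesis is added only in Theorem~\ref{altern}(b)), so bounding $|K_h(a)-K_h(b)|$ by $C|a-b|/h$ is unavailable for the null-hypothesis theorem; the ``standard approximation step'' you invoke is not justified here. Moreover, Cover's combinatorial bound is used in the paper solely inside the proof of Lemma~\ref{leem1} (to handle $\sup_\gamma|Q_n(\gamma)|$); once Lemma~\ref{beta} has fixed the direction, it plays no role in the asymptotic normality of $T_n$. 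The variance-ratio statement $\widehat v_n^2(\gamma_0^{(p)})/\sigma^2(n)\to 1$ in probability is likewise obtained conditionally, by a direct second-moment computation on the rank-based sum, with no empirical-d.f.\ replacement.
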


Theorem \ref{as_law} could be derived in the case of a finite dimension  covariate $X$ under Assumption \ref{D}-(a,b,c)  and Assumption \ref{K}-(a,b). Since no dimensional reduction is required in the univariate case, no exponential moment condition  is required when $X$ is univariate.

Under technical conditions ensuring that the sample of $U$ is estimated sufficiently accurately, the test statistic $T_n$ will still have standard normal critical values when the $U_i$'s are replaced by some estimates. Patilea, S\'anchez-Sellero and Saumard (2012) provide complete arguments for their test  in the case where the $U_i$'s are the residuals of the functional linear model with scalar responses. Similar arguments could be used with functional responses.
To keep this paper to a  reasonable length, the theoretical investigation of the extension to the case of estimated responses $U_i$ will be omitted. However, some empirical evidence from extensive simulation experiments are reported in section \ref{section3}.

\subsection{The behavior under the alternatives}

Our test is consistent against the general alternative $$H_1:\; \mathbb{P}[\mathbb{E}(U\mid X)=0] <1,$$
\emph{i.e.,} the probability that the test statistic $T_n$ is larger than any quantile $z_{1- a}$ tends to one under $H_1.$ This could be rapidly understood from the following
simple inequalities:
\begin{equation}
T_{n} \geq \max_{\gamma\in B_p} \frac{nh^{1/2} Q_{n}(\gamma)}{\widehat{v}_{n}(\gamma) } -\alpha _{n}
\geq \frac{nh^{1/2} Q_{n}(\widetilde\gamma ) }{\widehat{v}_{n}(\widetilde\gamma)} -\alpha_{n}, \quad \forall \widetilde \gamma\in B_p\subset\mathcal{S}^p,
 \label{eqaa}
\end{equation}
with $\widehat{v}_{n}(\gamma)$  defined in  (\ref{var_est_v_n}). Since $Var(\langle U_{1},U_{2}\rangle \mid  X_{1},X_{2}) \geq \underline{\sigma}^2 $, it is clear that
$1/\widehat{v}_{n}(\widetilde \gamma)=O_{\mathbb{P}}(1)$ for all $\widetilde \gamma$. On the other hand, from Lemma \ref{lem1}, there exists a $p_0$ and a $\widetilde \gamma\in B_{p_0}$ such that the expectation of $ Q_{n}(\widetilde \gamma )$ does not approach zero as the sample size grows to infinity and $h$ decreases to zero. On the other hand, for any $p>p_0$ and any $n$ and $h$, clearly $\max_{\gamma \in B_p} Q_{n}(\gamma ) \geq Q_{n}(\widetilde \gamma )$, because $B_{p_0} \times 0_{p-p_0} \subset B_{p}$. All these facts show why our test is an omnibus test, that is consistent against nonparametric alternatives, provided that $p\rightarrow \infty.$

To formally state the consistency result, let $\delta(X)$ be some $L^{2}[0,1]$-valued function such that $\mathbb{E}[\delta(X)] = 0$ and $0<\mathbb{E}[\Vert \delta(X) \Vert^{4}]<\infty$,  and let  $r_n,$ $n\geq 1$  be a sequence of real numbers that either decrease to zero or  $r_n\equiv 1.$  Consider the sequence of alternative hypotheses:
\begin{equation*}
H_{1n}:\; U = U^0 + r_n \delta(X),\quad  n\geq 1,  \quad\text{with} \;\; U^0 \in L^2[0,1],\;\; \mathbb{E}(U^0\mid X) = 0.
\end{equation*}
We show below that such directional alternatives can be detected \color{black}as long as \color{black} $r_n^2 n h^{1/2}  / \alpha_n \rightarrow \infty.$ This is exactly the condition one would obtain with scalar covariate; see Lavergne and Patilea (2008).
However, in the functional data framework, to obtain the convenient standard normal critical values,  we need   $1/\alpha_n =o(p^{-1}\ln^{-1} n)$. Hence, the rate $r_n$ at which the alternatives $H_{1n}$ tend to the null hypothesis should satisfy  $r_n^{2}n h^{1/2}/\{p\ln n\}\rightarrow \infty $.

\begin{theor}
\label{altern}
Suppose that
\begin{enumerate}
\item[(a)] Assumption \ref{D}  holds true with $U$ replaced by $U^0$;
 \item[(b)] Assumption \ref{K} is satisfied and in addition $nh^4 \rightarrow \infty$ and  there exists a constant $C$ such that
 $|K(u) - K(v)|\leq C|u-v|,$ $\forall u,v\in\mathbb{R};$
 \item[(c)] $\alpha _{n}/\{ p \ln n \} \rightarrow
\infty$ \color{black}and
$r_n $, $n\geq 1$ such that \color{black} $r_n^2 n h^{1/2}  / \alpha_n\rightarrow \infty$;
  \item[(d)] $\mathbb{E}[\delta(X)] = 0$ and $0<\mathbb{E}[\Vert \delta(X) \Vert^{8}]<\infty;$

  \item[(e)] there exists $ p$  and $\widetilde \gamma\in B_{p}\subset \mathcal{S}^{p}$  (independent of $n$) such that $\mathbb{E}[\delta(X) \mid \langle X,\widetilde  \gamma\rangle]\neq 0$ and,  $\forall t\in[0,1]$, the Fourier Transform of $\overline \delta (t,\cdot) = \mathbb{E}[\delta(X)(t)\mid F_{\widetilde\gamma} (\langle X, \widetilde\gamma\rangle ) = \cdot ]$ is integrable.

\end{enumerate}
Then the test based on $T_n$ is consistent against the sequence of alternatives $H_{1n}.$
\end{theor}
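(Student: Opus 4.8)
The plan is to establish a lower bound for $T_n$ that diverges to $+\infty$ in probability under $H_{1n}$. Starting from the chain of inequalities~(\ref{eqaa}), it suffices to exhibit a fixed $\widetilde\gamma\in B_{p}$ (with $p$ as in hypothesis~(e), independent of $n$) such that
$$
\frac{nh^{1/2}Q_n(\widetilde\gamma)}{\widehat v_n(\widetilde\gamma)} - \alpha_n \;\cvp\; +\infty .
$$
Because $1/\widehat v_n(\widetilde\gamma) = O_{\mathbb P}(1)$ (as noted after~(\ref{eqaa}), using part~(c)(i) of Assumption~\ref{D} applied to $U^0$ together with the extra contribution of $r_n\delta(X)$, which is $o_{\mathbb P}(1)$ in the relevant scaling), the whole task reduces to showing that $nh^{1/2}Q_n(\widetilde\gamma)$ grows strictly faster than $\alpha_n$. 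I would therefore decompose $Q_n(\widetilde\gamma)$ according to $U_i = U^0_i + r_n\delta(X_i)$, giving a quadratic expansion with three types of terms: the pure $U^0$-term, the cross term in $\langle U^0_i, r_n\delta(X_j)\rangle$, and the pure $r_n^2$-term in $\langle \delta(X_i),\delta(X_j)\rangle$.

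The first (pure $U^0$) term is $Q_n(\widetilde\gamma)$ computed on a sample satisfying $H_0$, hence by Lemma~\ref{leem1} it is $O_{\mathbb P}(n^{-1}h^{-1/2}p\ln n)$, so after multiplication by $nh^{1/2}$ it is $O_{\mathbb P}(p\ln n) = o(\alpha_n)$ by~(c). The cross term, being linear in the conditionally centered variables $U^0_i$, has conditional mean zero; a variance computation (using $\mathbb E[\|U^0\|^\nu\mid X]\le C_2$ and $\mathbb E\|\delta(X)\|^4<\infty$) shows its standard deviation is of smaller order than the main term, so it too is negligible after rescaling. The key term is the $r_n^2$-term: its conditional expectation is, up to smoothing bias, $r_n^2\,\mathbb E[\langle\delta(X_i),\delta(X_j)\rangle\,h^{-1}K_h(\cdot)]$, which converges to $r_n^2\,\overline Q(\widetilde\gamma)$ where $\overline Q(\widetilde\gamma) = \int \|\mathbb E[\delta(X)\mid F_{\widetilde\gamma}(\langle X,\widetilde\gamma\rangle)=u]\|^2\,du$ — here the integrable-Fourier-transform hypothesis in~(e) guarantees $\mathbb E[\delta(X)\mid\langle X,\widetilde\gamma\rangle]\ne 0$ transfers to $\overline\delta(t,\cdot)\not\equiv 0$ for a non-null set of $t$, so $\overline Q(\widetilde\gamma)>0$ (this is exactly the direction-detection content of Lemma~\ref{lem1}, plus the use of the probability-integral transform to make the conditioning variable uniform). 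Hence $nh^{1/2}$ times this term is, to leading order, $r_n^2 n h^{1/2}\,\overline Q(\widetilde\gamma)\cdot(1+o_{\mathbb P}(1))$ plus a bias contribution controlled by the Lipschitz assumption on $K$ and $nh^4\to\infty$. Collecting the three pieces,
$$
\frac{nh^{1/2}Q_n(\widetilde\gamma)}{\widehat v_n(\widetilde\gamma)} - \alpha_n
\;\geq\; \frac{r_n^2 n h^{1/2}\,\overline Q(\widetilde\gamma)}{v(\widetilde\gamma)}\bigl(1+o_{\mathbb P}(1)\bigr) - \alpha_n - o_{\mathbb P}(\alpha_n)
\;=\; \alpha_n\Bigl(\frac{r_n^2 n h^{1/2}}{\alpha_n}\cdot\frac{\overline Q(\widetilde\gamma)}{v(\widetilde\gamma)}(1+o_{\mathbb P}(1)) - 1 - o_{\mathbb P}(1)\Bigr),
$$
which tends to $+\infty$ by hypothesis~(c). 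Since $\alpha_n/\{p\ln n\}\to\infty$ makes Lemma~\ref{beta}'s requirement hold, everything is consistent; it follows that $\mathbb P(T_n\geq z_{1-a})\to 1$.

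The main obstacle I anticipate is the careful control of the $r_n^2$-term: one must show that the NN-smoothed $U$-statistic $\frac{1}{n(n-1)h}\sum_{i\ne j}\langle\delta(X_i),\delta(X_j)\rangle K_h(F_{\widetilde\gamma,n}(\langle X_i,\widetilde\gamma\rangle)-F_{\widetilde\gamma,n}(\langle X_j,\widetilde\gamma\rangle))$ converges in probability to $\overline Q(\widetilde\gamma)>0$, handling simultaneously (i) the replacement of the empirical d.f.\ $F_{\widetilde\gamma,n}$ by the true $F_{\widetilde\gamma}$ inside the kernel (standard for NN estimators, controlled via Glivenko–Cantelli-type arguments and the Lipschitz property of $K$), (ii) the smoothing bias, which the Lipschitz condition on $K$ and $nh^4\to\infty$ are designed to kill, and (iii) the Hájek projection of the $U$-statistic, whose degenerate remainder must be shown negligible. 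A secondary but routine difficulty is verifying $1/\widehat v_n(\widetilde\gamma)=O_{\mathbb P}(1)$ under the alternative, which amounts to checking that the extra $r_n\delta(X)$ contributions do not destroy the lower bound $\underline\sigma^2$ on the relevant conditional second moment — this is immediate because $r_n$ is bounded and the added variance is nonnegative. These estimates are close in spirit to those in Lavergne and Patilea (2008) and in the proof of Lemma~\ref{leem1}, so they should go through with the stated moment and rate conditions.
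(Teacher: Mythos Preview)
Your overall strategy coincides with the paper's: start from inequality~(\ref{eqaa}), expand $Q_n(\widetilde\gamma)$ into the three terms $Q_{0n}+2r_nQ_{1n}+r_n^2Q_{2n}$, show $Q_{0n}$ and $Q_{1n}$ are negligible, and show $Q_{2n}(\widetilde\gamma)$ converges in probability to the strictly positive quantity $\int_0^1\int_0^1|\overline\delta(t,v)|^2\,dv\,dt$. The three ``obstacles'' you list for the $r_n^2$-term---replacement of $F_{\widetilde\gamma,n}$ by $F_{\widetilde\gamma}$ via the Lipschitz property of $K$ and $\Delta_n=O_{\mathbb P}(n^{-1/2})$ (this is precisely where $nh^4\to\infty$ enters, since the error is $O_{\mathbb P}(\Delta_n/h^2)$), control of the smoothing bias, and negligibility of the degenerate part of the $U$-statistic---are exactly the steps the paper carries out, the bias step being done through the Fourier representation of $\overline\delta(t,\cdot)$ afforded by hypothesis~(e).

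There is, however, one genuine logical slip. You write that because $1/\widehat v_n(\widetilde\gamma)=O_{\mathbb P}(1)$ the task reduces to showing $nh^{1/2}Q_n(\widetilde\gamma)$ outgrows $\alpha_n$. But $1/\widehat v_n=O_{\mathbb P}(1)$ only says $\widehat v_n$ is bounded \emph{away from zero}; to pass from divergence of $nh^{1/2}Q_n$ to divergence of $nh^{1/2}Q_n/\widehat v_n$ you need the opposite direction, namely $\widehat v_n(\widetilde\gamma)=O_{\mathbb P}(1)$. Under $H_{1n}$ this is not entirely automatic, since $\widehat v_n^2$ now contains the terms $r_n^4\langle\delta(X_i),\delta(X_j)\rangle^2$. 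The paper handles this explicitly: by Cauchy--Schwarz, $\widehat v_n^2(\widetilde\gamma)$ is bounded by $\||\mathcal K_2\||_2\sum_i\|\delta(X_i)\|^4$ (plus analogous $U^0$-terms), and the spectral-norm argument of equation~(\ref{spectral}) gives $\||\mathcal K_2\||_2=O_{\mathbb P}(n^{-1})$, so that $\widehat v_n^2(\widetilde\gamma)=O_{\mathbb P}(1)$ thanks to $\mathbb E\|\delta(X)\|^4<\infty$. You should insert this step; without it the final display in your argument does not follow. (Your closing remark addresses only the lower bound on $\widehat v_n^2$, which is the irrelevant direction here.) A minor side point: for fixed $\widetilde\gamma$ the paper obtains the sharper $Q_{0n}(\widetilde\gamma)=O_{\mathbb P}(n^{-1}h^{-1/2})$ directly from the CLT argument rather than invoking the uniform bound of Lemma~\ref{leem1}, but your weaker estimate $O_{\mathbb P}(p\ln n)=o(\alpha_n)$ suffices.
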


\bigskip

The additional Lipschitz condition on the kernel $K(\cdot)$  and the restriction on the bandwidth range in Theorem \ref{altern}-(b) are reasonable technical conditions that simplify the proof of consistency. The zero mean condition for $\delta(\cdot)$ keeps the mean of $U$ equal to zero under the alternative hypotheses $H_{1n}$. The existence of vectors $\widetilde \gamma$ with $\mathbb{E}[\delta(X) \mid \langle X,\widetilde  \gamma\rangle]\neq 0$ is guaranteed by Lemma \ref{lem1}-(B). In
Theorem \ref{altern}-(e) we impose a convenient mild technical condition on one of such vectors. Finally, Theorem \ref{altern} could be easily adapted to the case of a \color{black}finite dimension covariate. \color{black} The details are omitted.

\subsection{Practical aspects}\label{cesar_is_the_best3}

In the case of a functional covariate, the goodness-of-fit procedure we propose in this paper requires the choice of several quantities: the orthonormal basis $\mathcal{R}$ in the space of $X$, the order $p,$ the penalty amplitude $\alpha_n$, the privileged direction $\gamma_0^{(p)},$ the set $B_p$ and the bandwidth $h$. \color{black}In this section we provide some guidelines on how these quantities could be chosen by the practitioner, except for $\gamma_0^{(p)},$  $B_p$ and $h$ for which the choice will be discussed in the Supplementary Material. \color{black} Before doing this, let us point out that the choice of the basis in the space of $U$ is not really an issue. In applications, the statistician only has to compute the $n(n-1)$  products $\langle U_i, U_j \rangle$ and this could be easily done  with high accuracy and low computational costs in any basis.

Our theoretical results above are derived for a fixed basis $\mathcal{R}$ in the space of $X$. The assumptions used to derive these results impose only very mild conditions on the basis $\mathcal{R}$, see Assumption \ref{D}-(b) and condition (e) in Theorem \ref{altern}. However, the choice of the basis could influence the finite sample performances of the test. Clearly, the practitioner would prefer a basis that allows for an accurate \color{black}low-dimensional representation \color{black} of the covariate and hence for a low $p$ in our testing procedure. A widely used basis is that given by the eigenfunctions of the covariance operator $\Gamma$ of $X$ that is defined by:
$$
(\Gamma v)(t) = \int \sigma(t,s) v(s) ds ,\qquad v\in L^2[0,1],
$$
where $X$ is supposed to satisfy the condition $\int \mathbb{E}(X^2(t)) dt < \infty$ and $\sigma(t,s) = \mathbb{E}[ \{X(t) - \mathbb{E}(X(t))  \}  \{ X(s) - \mathbb{E}(X(s)) \}] $ is supposed positive definite. Let $\lambda_1\geq \lambda_2 \geq \cdots$ denote the ordered eigenvalues of $\Gamma$ and let $\mathcal{R}=\{\psi_1,\psi_2,\cdots\}$ be the corresponding basis of eigenfunctions  of $\Gamma$ that are usually  called the functional principal components (FPC). The FPCs represent the orthonormal basis of the Karhunen-Lo\`eve decomposition of $X$ and provide optimal  low-dimensional representations  of $X,$ with respect to the mean-squared error. See, for instance, Ramsay and Silverman (2005).
In some cases where the law of $X$ is given, the FPCs are available. However, most of the time this is not the case and the FPCs have to be estimated from the empirical covariance operator
$$
(\widehat \Gamma v)(t) = \int \widehat \sigma(t,s) v(s) ds ,
$$
where $\widehat \sigma(t,s) = n^{-1} \sum_{i=1}^n  \{X_i(t) - \overline X_n (t)  \}  \{ X_i(s) - \overline X_n (s) \}] $ and $\overline X_n (t) = n^{-1} \sum_{i=1}^n  X_i(t).$
Let $\widehat \lambda_1\geq \widehat \lambda_2 \geq \cdots\geq 0$ denote the eigenvalues of $\widehat \Gamma$ and let $\widehat \psi_1,\widehat \psi_2,\cdots$ be the corresponding basis of eigenfunctions, \emph{i.e.,} the estimated FPCs. We adopt the usual identification condition and we suppose that for any $j,$ $\langle  \psi_j, \widehat \psi_j\rangle \geq 0$. For any $\gamma = (\gamma_1,\cdots,\gamma_p)\in\mathcal{S}^p $ let
$$
\langle X_i ,\gamma\rangle_n = \sum_{k=1}^p\gamma_k \int_{[0,1]} X_i(t)\widehat\psi_k (t) dt.
$$
Let $\widehat T_n$ be the test statistic obtained from equations (\ref{bet}) and (\ref{test_stat}) after replacing all the inner products $\langle X_i ,\gamma\rangle$ by the estimated versions $\langle X_i ,\gamma\rangle_n.$
Below we show that the test $\mathbb{I}(
\widehat T_{n}\geq z_{1-a} ) $ behaves asymptotically like the test $\mathbb{I} \left(
T_{n}\geq z_{1-a} \right). $ For the behavior under the null hypothesis, no additional assumption is required. For  consistency, we impose  mild conditions on $X$ and a slightly more restrictive bandwidth range.

\begin{cor}\label{FPC}
a) Under the same conditions, the conclusion of Theorem \ref{as_law} remains true if $T_n$ is replaced by $\widehat T_n$.

b) In addition to the conditions of Theorem  \ref{altern} assume that
\begin{enumerate}
\item there exist $C,\eta>0$ such that $\lambda_j - \lambda_{j+1}\geq C j^{-\eta}$, $\forall j\geq 1$;

\item the vector $\widetilde \gamma\in \mathcal{S}^{p}$ in condition (e) of Theorem \ref{altern} is such that the variable $\langle X,\widetilde \gamma\rangle$ has a bounded density $f_{\widetilde \gamma}$ ;

\item $nh^4/p^{2\eta+1}\ln^2 n \rightarrow \infty.$
 \end{enumerate}
The conclusion of Theorem  \ref{altern}, then remains true if $T_n$ is replaced by $\widehat T_n$.
\end{cor}

The  condition on the spacings between the ordered eigenvalues of $\Gamma$
is a   common condition in functional data modeling.
In view of Lemma \ref{lem1}-(B), almost any unit norm vector of finite but sufficiently large dimension is a candidate to be $\widetilde\gamma$. Hence
the bounded density condition for some $\langle X,\widetilde \gamma\rangle$ is also a mild restriction. For instance, it is satisfied for any unit norm vector $\widetilde\gamma \in\mathbb{R}^{p}$ if $X$ is a gaussian process.

The value of $p$ needs to grow to infinity to guarantee consistency against general alternatives. Meanwhile, large $p$ makes the optimization over $B_p$ more difficult.
Using the FPC basis could be a good compromise to detect general alternatives with small $p.$ If the $\lambda_j$'s then decrease as fast as a power of $j,$  an automatic choice for $p$ could be given by $\min\{p : \sup_{j\geq p} \lambda_j \leq C\ln^{-1} n \}$ for some constant $C.$ This would result in a logarithmic rate for $p$. In practice $\lambda_j$ should be replaced by the estimates $\widehat \lambda_j$, but the  rate of $p$ will not change because $\sup_{j\geq 1} |\widehat \lambda_j - \lambda_j |$  is of order $O_{\mathbb{P}}(n^{-1/2})$ under mild conditions; see, for instance, Horv\'{a}th and Kokoszka (2012), chapter 2. In practice simple empirical rules work as well. For instance $p$ could be the smallest value such that more than some fixed high percentage, say 95\%, of the variance within the covariate sample is captured by the first $p$ principal components.


Under the null hypothesis, if $n\rightarrow\infty$ and $p$ increases with $n$ at a suitable rate, the ratio  $n h ^{1/2} Q_{n}(\gamma)/\widehat{v}_{n}(\gamma)$ behaves like a standard normal for any given sequence of $\gamma\in\mathcal{S}^p$.
Meanwhile the supremum of this ratio with respect to $\gamma\in B_p$ diverges in probability with a rate smaller or equal to $p \ln n .$ Hence $\alpha_n$ has to grow to infinity faster than $p \ln n.$ In practice, for sample sizes of hundreds, larger $\alpha_n$ (like for instance $\alpha_n = 10$) will likely result in taking $\widehat \gamma _n = \gamma_0^{(p)}$ and in this case the standard normal critical values will be quite accurate. Having $\widehat \gamma _n = \gamma_0^{(p)}$ might be reasonable when the practitioner judges $\gamma_0^{(p)}$ trustful for detecting alternatives. On the other hand, smaller $\alpha_n$ (for instance $\alpha_n=1$ or 2) will probably lead to a value of the test statistic equal to the maximum value of $n h
^{1/2} Q_{n}(\gamma)/\widehat{v}_{n}(\gamma)$ and hence in general, the test will overreject the null hypothesis. Meanwhile, smaller $\alpha_n$ is preferable for detecting general alternatives. On the basis of our detailed simulation investigations, we recommend  values for $\alpha_n$ between 2 and 5 and a correction of the critical values through resampling, as explained below.

%

Now, let us propose a wild bootstrap procedure that could be used for correcting the finite sample critical values. \color{black}In particular, such a correction is useful to take into account the effect of the penalty $\alpha_n$ with finite samples. \color{black} The bootstrap sample, denoted by $U_i^b$ , $1\leq i\leq n$, is obtained as follows:  $U_i^b = \zeta_i U_i$, $1\leq i\leq n$, where $\zeta_i$, $1\leq i\leq n$ are independent random variables with expectation zero and variance one. In particular, for their common distribution we chose the two-points distribution  proposed by Mammen (1993), that is, $\zeta_i=-(\sqrt{5}-1)/2$ with probability $(\sqrt{5}+1)/(2\sqrt{5})$ and $\zeta_i=(\sqrt{5}+1)/2$ with probability $(\sqrt{5}-1)/(2\sqrt{5})$. As with the original test statistics, a bootstrap test statistic $T_n^b$ is built from a bootstrap sample. Similarly, let $\widehat T_n^b$ be the bootstrap test statistic obtained from this procedure applied with the estimated FPC basis. As usually, for any $a\in(0,1)$, the $(1-a)-$th conditional quantile of $T_n^b$ or $\widehat T_n^b$ given $(U_1,X_1),\cdots, (U_n,X_n)$ could be approximated  using a Monte-Carlo method.
The asymptotic validity of this bootstrap procedure is guaranteed by Theorem \ref{as_law} and the following result.

\begin{theor}\label{bbot}
Under the null hypothesis $H_0$ and if  the conditions of Theorem \ref{as_law}  hold true,
$$
\sup_{x\in\mathbb{R}} \left| \mathbb{P}\left( T_n^b \leq x \mid U_1,X_1,\cdots, U_n,X_n \right) - \Phi(x) \right| \rightarrow 0,\qquad \text{in probability,}
$$
where $\Phi(\cdot)$ is the standard normal distribution function.
Under the sequence of alternative hypotheses $H_{1n}$ and if the conditions of Theorem \ref{altern}  hold true, for any $a\in (0,1),$ $\mathbb{P}\left(T_n >  z^b_{1-a, n}\right)\rightarrow 1,$ where $z^b_{1-a, n}$  is the $(1-a)-$th conditional quantile of the  statistic $T_n^b$ given
$(U_1,X_1),\cdots, (U_n,X_n).$ The statements remain true with $T_n^b$  replaced by $\widehat T_n^b$.
\end{theor}

\bigskip

Finally, the optimization problem $\max_{\gamma\in\mathcal{S}^p} Q_{n}(\gamma)/\widehat{v}_{n}(\gamma)$ can be solved with reasonable computational effort for small $p$ (up to $p=5$) by taking a grid of values in the hypersphere $\mathcal{S}^p$. For a larger $p$, a grid of values is not feasible in terms of computation time. In this case, we propose a sequential algorithm based on successive one-dimensional optimizations. If one considers $p=2,$ the directions $\gamma$ on the hypersphere $\mathcal{S}^2$ can be represented as $\gamma=(\cos\theta, \sin\theta)$ with $\theta\in[0,2\pi)$. As  mentioned, one can make the restriction of $\theta\in[0,\pi)$, since half of the circle is sufficient  to consider all directions in the plane. An equally-spaced grid of values of $\theta$ in $[0,\pi)$ provides an equally-spaced grid of directions in $\mathcal{S}^2$. Next, if $p=3,$ then the first step would be to optimize with respect to  the first two components as before. Let $(\gamma_1^*,\gamma_2^*)$ be such an optimal direction in two dimensions. The next step would be to optimize in the set of directions $\cos\theta \cdot (\gamma_1^*, \gamma_2^*, 0) +\sin\theta \cdot
(0, 0, 1)$ for $\theta\in[0,\pi)$. This is again a one-dimensional optimization that can be solved with a grid of values in the interval $[0, \pi)$. This procedure can be applied to a possible fourth dimension, from the optimal direction obtained with the first three dimensions, and so on until the chosen number of components $p$ is reached. This method would require $(p-1)$ one-dimensional optimizations. Simulations given in section \ref{sect_4_2} were carried out with a sequential algorithm and a grid of 50 points in each one-dimensional optimization. In the Supplementary Material, some empirical results are presented to show that the statistical properties obtained with the sequential algorithm are very close to those obtained with a full-dimensional optimization in $\mathcal{S}^p$. We also show in Lemma B in the Supplementary Material that the sequential search will lead toward a direction able to reveal any departure from the null hypothesis. On the other hand, the asymptotic behavior of the test under the null hypothesis is not affected by the sequential search, since the dominant part of the test statistic will still be given by $nh^{1/2} Q_n(\gamma_0^{(p)})/\widehat v_n(\gamma_0^{(p)}),$
exactly as in the case of a full-dimensional search.



\section{Empirical study}\label{section3}

The proposed methods were applied to simulated data as well as to real data. We first present the results obtained for a univariate predictor. Functional predictor models are considered later.

\subsection{Univariate predictor}

We consider a model where the response, $U_i$, is functional and the predictor, $X_i$, is univariate. Under the null hypothesis, $X_i$ has no effect on $U_i$ and a common curve $\mu(t)$ represents the expectation of $U_i(t)$, that is,
$$U_i(t)=\mu(t)+\epsilon_i(t),\quad 1\leq i\leq n, \qquad \mu(t)=0.01\cdot\exp(-4\cdot(t-0.3)^2), \;t\in[0,1],$$
where $\epsilon_i$ are independent Brownian bridges, also independent of $X_i$. The $X_i$'s have a log-normal distribution with mean $3$ and standard deviation $0.5$. Under the alternative,
$$U_i(t)=\mu(t)\cdot X_i+\epsilon_i(t),\qquad 1\leq i\leq n.$$
This is a multiplicative effects model, as proposed by Chiou, M\"{u}ller and Wang (2004) for the medflies data. Figure \ref{Figure0} represents the curve $\mu(t)$ which is the common curve shape for all individuals in the multiplicative effects model.

\medskip
\begin{center}
\emph{Insert Figure 1 here}
\end{center}
\medskip


The statistic was computed with the Epanechnikov kernel, $K(x)=(3/4)(1-x^2)\mathbb{I}_{\{|x|\leq 1\}}$. Table 1 below shows percentages of rejections for several nominal levels $10\%, 5\%, 1\%$ and sample sizes $n=100,200$, under the null hypothesis and under the alternative, and different values of the bandwidth, $h=c_h\cdot n^{-2/9}.$ The coefficient $c_h$ is indicated in the table. For each original sample, we used 499 bootstrap samples to compute the critical value. One thousand original samples were generated to approximate the percentages of rejection. Each original sample was generated once for all the significance levels and bandwidths.

The level is respected under the null hypothesis, with the approximation being better for larger sample size. Under the alternative, the power is increasing with  sample size, and there is not much effect of the bandwidth.

\medskip
\begin{center}
\emph{Insert Table 1 here}
\end{center}
\medskip


\subsubsection{Application to egg-laying curves of fruit flies}

As briefly explained in the Introduction, Chiou \emph{et al.} (2003) proposed a multiplicative effects model for the egg-laying curve of each fly, which can be expressed
$$U_i(t)=\mu(t)\phi(X_i)+\epsilon_i(t),\qquad 1\leq i\leq n,$$
where $U_i(t)$ is the number of eggs laid by the $i-$th fly on day $t$, $\mu(t)$ represents the common shape of the egg-laying curve for all flies, $\phi(X_i)$ is a multiplicative effect related to $X_i$ that denotes the total number of eggs laid by the $i-$th fly and $\epsilon_i(t)$ is an error term. The data under analysis consists of 936 flies that laid at least one egg in their lifetime. The complete data set is available on the web pages of the authors of Chiou \emph{et al.} (2003).

We applied the new test to check the effect of the total number of eggs on the egg-laying curve. The null hypothesis of no-effect versus nonparametric alternative was clearly rejected, with $p-$values extremely close to zero. The test was next also applied to check the goodness-of-fit of the multiplicative effects model. Note that under the multiplicative effects model, both functions $\mu(t)$ and $\phi(x)$ are nonparametrically estimated. To obtain the goodness-of-fit test, the residuals coming from the adjusted model were used in the expression of the test statistic.
Our test clearly rejects the model ($p-$value less than 0.001). The cause could be due to some discrepancies already found by Chiou \emph{et al.} (2003) in the peak of the egg-laying curve. Some flies showed a peak quite far from the model, and in particular those flies which produce fewer eggs (smaller value of $X_i$) typically had shorter lifetimes and an earlier peak.
When $191$ of these flies with an anomalous peak were deleted from the data set, the remainder sub-sample of $745$ flies provided a better adjustment of the model, which was no longer rejected by our test (the $p-$value was 0.124). The new test was then useful to confirm the anomalies found by Chiou, M\"{u}ller and Wang (2004) in some individuals with respect to the multiplicative effects model. Once these individuals were removed, the model was accepted by the test.

\subsection{Functional predictor}\label{sect_4_2}

We shall now assess the performance of the test in the case of a functional predictor. The sequential algorithm described in Section 3.5 will be used to compute the test statistic with a grid of 50 points in each one-dimensional optimization. In all our simulated models the empirical percentages of rejection will be provided on the basis of one thousand original samples. The critical values for each sample will be approximated by means of 499 bootstrap replicates.

Together with the assessment of the level under the null and the power under the alternative, we shall compare our test with the procedure proposed by Kokoszka \emph{et al.} (2008), which is a parametric test of the functional linear effect. 

The first situation we considered then was a functional linear model given by
\begin{equation} \label{eq:functional-linear}
U_i(t)=\int_0^1 \zeta(s,t) X_i(s)\,ds+\epsilon_i(t),\qquad 1\leq i\leq n
\end{equation}
where $X_i$ and $\epsilon_i$ are independent Brownian bridges and $\psi$ is square-integrable over $[0,1]\times[0,1]$. The kernel $\psi$ was chosen to be $\zeta(s,t)=c\cdot \exp(t^2+s^2)/2$, with $c=0$ under the null and $c\neq 0$ under the alternative.

The estimated functional principal components of the covariate are used as the basis. Different possibilities for the privileged direction $\gamma_0^{(p)}$ were considered. We present here the results for an uninformative direction, with the same coefficients in all basic elements. For the penalization we used the value $\alpha_n=2$, which provides a good trade-off between the privileged direction and the direction maximizing the standardized statistic.
The Epanechnikov kernel was again used to compute the statistic in each direction. The bandwidth was chosen following the rule $h=n^{-2/9}$.

Table 2 shows the empirical powers obtained for different significance levels, and sample sizes, with $c=0$ representing the null hypothesis and $c=0.25$, $0.50$ and $0.75$ under the alternative.
For the number of basic components, $p$, they were chosen for each simulated sample such that the percentage of explained variance was at least 95\%. The most frequent values observed for $p$ were 9 and 10. This is close to the dimension with 95\% of variance in the Brownian bridge, which is the distribution of the covariate.

The empirical powers of the Kokoszka \emph{et al.} (2008)'s test are also shown titled $KMSZ$. The same dimension $p$ with 95\% of explained variance is used as the dimension of the covariate in their test. Their test also requires choosing a dimension of the response. The value of $2$ was taken in all cases.

From Table 2, one can conclude that the new test is generally respects the nominal levels, while Kokoszka \emph{et al.} (2008)'s test is somewhat conservative, specially for small samples.

Regarding the power under the alternative, one would expect Kokoszka \emph{et al.} (2008)'s test to be more powerful, since their test is specifically designed to check this type of effect. We consider that this power comparison is affected by the conservative nature of their test for a high dimension such as 9 or 10, as  estimated here. We will see this in more detail in the following experiments with several fixed values of the dimension $p$.

\medskip
\begin{center}
\emph{Insert Table 2 here}
\end{center}
\medskip


Table 3 shows the empirical powers of the new test and Kokoszka \emph{et al.} (2008)'s test for fixed values of the dimension $p$ under the null hypothesis. The option ``random'' for $p$ represents the random number of components required to obtain at least 95\% of the explained variance. The new test  respects the nominal levels for any dimension, while Kokoszka \emph{et al.} (2008)'s test is conservative for high dimensions, especially with small sample size and small nominal level.

\medskip
\begin{center}
\emph{Insert Table 3 here}
\end{center}
\medskip


Table 4 is similar to Table 3, but under the alternative hypothesis coming from the functional linear effect with $c=0.5$. As expected, Kokoszka \emph{et al.} (2008)'s test is more powerful for low dimensions, since this is the ideal situation of their parametric test. An increasing dimension produces a power loss in both tests, as a consequence of more noise in the statistic, while low dimensions are sufficient to detect the alternative. The new test is less affected by the dimension than the parametric test. In particular, the new test becomes more powerful than its parametric competitor for high dimensions. Although part of parametric test's power loss can be assigned to the asymptotic distribution inaccuracy, it is also true that the new test is designed to overcome the problem of dimension, usually called {\it curse of dimension} in the literature of lack-of-fit tests.

\medskip
\begin{center}
\emph{Insert Table 4 here}
\end{center}
\medskip


Other alternatives were considered to complete the comparison with Kokoszka \emph{et al.}'s test. One of them is of the following type:
$$U_i(t)=\beta(t) X_i(t)+\epsilon_i(t),\qquad 1\leq i\leq n,$$
where $X_i$ and $\epsilon_i$ are independent Brownian bridges (as in the previous situation) and $\beta$ is a square-integrable function on $[0,1]$. This is the so-called concurrent model studied in detail in Ramsay and Silverman (2005), where the covariate at time $t,$ $X_i(t),$ only influences the response function at time $t,$ $U_i(t)$. The function $\beta$ was $\beta(t)=c\cdot\exp(-4(t-0.3)^2)$, with $c=0$ under the null and $c=0.4$ under the alternative.

A completely nonlinear alternative was also considered. In this case a quadratic model of this type was generated:
$$U_i(t)=H\left(X_i(t)\right)+\epsilon_i(t),\qquad 1\leq i\leq n,$$
where $X_i$ and $\epsilon_i$ are independent Brownian motion and Brownian bridge, respectively, and $H(x)=c\cdot(x^2-1)$. The null hypothesis is satisfied when $c=0$, while the alternative is represented by $c=0.5$.

Table 5 contains the percentages of rejection under the three alternative models for both tests with different significance levels and sample sizes. The bandwidth followed the rule $h=n^{-2/9}$, and the dimension $p$ was taken to explain 95\% of the variance in the empirical PCA.

Kokoszka \emph{et al.}'s test is more powerful than the new test under the linear alternative, and also under the concurrent alternative. This is not necessarily surprising since the concurrent model is in a sense, a degenerate functional linear model. On the other hand, Kokozska \emph{et al.}'s test, which was designed to detect only linear effects, is not powerful under the quadratic alternative.

\medskip
\begin{center}
\emph{Insert Table 5 here}
\end{center}
\medskip


We shall now consider a functional linear model with heteroscedastic error, given by
$$U_i(t)=\int_0^1 \zeta(s,t) X_i(s)\,ds+\sqrt{1/2+X_i(t)^2}\epsilon_i(t),\qquad 1\leq i\leq n$$
where $X_i$ and $\epsilon_i$ are independent Brownian bridges and the kernel $\psi$ was chosen to be $\zeta(s,t)=c\cdot \exp(t^2+s^2)/2$, with $c=0$ under the null and $c=0.35$ under the alternative. It is the same functional linear model introduced in (\ref{eq:functional-linear}), but with heteroscedastic error. The new test and Kokoszka {\it et al.} (2008)'s test were applied with the same configuration used for the homoscedastic functional linear model.

Table 6 below contains the empirical powers under the null hypothesis for different values of the dimension $p$ and a random $p$ with 95\% of explained variance, with different significance levels and sample sizes. It can be seen that the new test  respects the nominal level, while Kokoszka {\it et al.} (2008)'s test provides significant  deviations from the nominal level. Note that their test assumes homoscedasticity.

\medskip
\begin{center}
\emph{Insert Table 6 here}
\end{center}
\medskip


Table 7 below shows the percentages of rejections obtained under the functional linear effect with heteroscedastic errors. Kokoszka \emph{et al.} (2008)'s test is more powerful than the new test for small dimensions $p$ since their test is designed for detecting linear deviations. Nevertheless, the new test is more robust for larger dimensions.

\medskip
\begin{center}
\emph{Insert Table 7 here}
\end{center}
\medskip


\subsubsection{Application to Canadian Weather data}

%

The methods proposed in this paper will be applied to check the goodness-of-fit of several models for the Canadian weather dataset. This dataset is included in the R package fda (http://www.r-project.org), which implements the methods for functional data analysis described in the book by Ramsay and Silverman (2005).

The data consist of the daily mean temperature and rainfall registered in 35 weather stations in Canada. A curve is available for each station, describing the rainfall for each day of the year. The same type of curve with the temperature is used as covariate to assist rainfall predictions. Several regression models with functional covariate and functional response have been studied in Ramsay and Silverman (2005), and illustrated using the Canadian weather dataset. The purpose here is to use our goodness-of-fit technique to assess the validity of each of the common models for this dataset. To this end, the residuals coming from the model to be tested are used in the expression of the test statistic, while the covariate is the temperature curve.

Table 8 below contains the $p-$values for testing the goodness-of-fit of the models. The stations are classified into four climatic zones: Atlantic, Pacific, Continental and Arctic, giving place to functional ANOVA models, together with the possible linear or concurrent effect of the functional covariate. The formulae in Table 8 describe the nature of each model, where $Y_{ij}(t)$ represents the logarithm of the rainfall at the station $i$ of the climate zone $j$ on day $t$, $X_{ij}(t)$ is the temperature at the same station on day $t$ of the year.

The models were estimated as described in Ramsay and Silverman (2005) using a Fourier basis, where eleven basis functions were taken to estimate the components $\alpha_j(t)$, and seven basis functions to estimate the concurrent or functional linear components.

For the application of the test, the bandwidth was chosen to be $h=1.0\,n^{-2/9}=0.4538$, and the Epanechnikov kernel was used. Estimated functional principal components of the covariate were used as a basis. The first two principal components were taken, since they were able to capture 96\% of the variability. The column entitled $\psi_1$ in Table 8 contains the $p-$values when the privileged direction is the first eigenvector, while the column entitled $\psi_u$ corresponds to an uninformative direction with the same coefficient in each basis function. The penalization was taken to be $\alpha_n=2$. The $p-$values are based on 999 bootstrap replicates.

The resulting $p-$values are all small, so some pattern different from the considered models should be explored. However, amongst the models included here, the really significant $p-$values correspond to the models that do not include the ANOVA effects, that is, the effects coming from the climatic zone. Then, using the new test, we can conclude that the climatic zone has a significant effect on the rainfall curve, and that this effect is mostly described by the considered parametric models.

\medskip
\begin{center}
\emph{Insert Table 8 here}
\end{center}
\medskip


\newpage

\section{Appendix: proofs}

\label{secproofs} \setcounter{equation}{0}
\color{black}In this section and in the Supplementary Material, \color{black} $c,c_1, C, C_1,...$  denote constants that may have different values from line to   line. Moreover, for any integrable function $\phi$ defined on the real line, $\mathcal{F}[\phi]$ denotes its Fourier Transform, that is $\mathcal{F}[\phi] (t) = \int_{\mathbb{R}} \phi(x) \exp\{-2\pi i t x\} dx.$
Finally, recall that if
$\gamma = (\gamma_1,\cdots,\gamma_p)\in \mathcal{S}^p,$ then
$
\langle X,\gamma\rangle  = \sum_{j=1}^p \langle X, \psi_j\rangle \gamma_j.
$

\bigskip

\begin{proof}[Proof of Lemma \ref{lem1}]
(A) We have
\begin{eqnarray*}
\mathbb{E} (U\mid X) = 0 & \Leftrightarrow & \mathbb{E} (\langle U , \psi_j \rangle \mid X) = 0, \; \forall j\geq 1\\
& \Leftrightarrow & \mathbb{E} (\langle U , \psi_j \rangle \mid \langle X, \psi_1\rangle,\cdots,\langle X, \psi_p\rangle) = 0, \; \forall j\geq 1, \forall p\geq 1\\
& \Leftrightarrow &
\mathbb{E} (\langle U , \psi_j \rangle \mid \langle X, \gamma \rangle ) = 0, \; \forall j\geq 1, \forall p\geq 1, \forall \gamma \in \mathcal{S}^p\\
& \Leftrightarrow & \mathbb{E} ( U  \mid \langle X, \gamma \rangle ) = 0, \; \forall p\geq 1, \forall \gamma \in \mathcal{S}^p\\
& \Leftrightarrow & \mathbb{E} ( U  \mid F_\gamma (\langle X, \gamma \rangle) ) = 0, \; \forall p\geq 1, \forall \gamma \in \mathcal{S}^p.
\end{eqnarray*}
The first and the fourth equivalence in the last display are due to the fact that $\mathcal{R}$ is a basis in $L^2[0,1]$. Next,  note that by Cauchy-Schwarz inequality $\forall j$,
$  \mathbb{E} |\langle U, \psi_{j}\rangle| \leq \mathbb{E} \| U \|  <\infty$.
Thus the second equivalence in the last display is
guaranteed elementary properties of the conditional expectations and the Doob's Martingale Convergence Theorem, while the third equivalence  is given by
Lemma 2.1-(A) of Lavergne and Patilea (2008). \color{black}
The justification of the equality $\mathbb{E} ( U  \mid \langle X, \gamma \rangle ) = \mathbb{E} ( U  \mid F_\gamma (\langle X, \gamma \rangle) )$ giving the last equivalence is provided in the Lemma A in the Supplementary Material.
\color{black} To complete the proof of part (A) it suffices to note that
\begin{eqnarray*}
\mathbb{E}\left[\langle U, \mathbb{E}\left(U \mid \langle X,\gamma \rangle \right) \rangle \right]&=&\esp \left[ \Vert \mathbb{E} ( U  \mid \langle X, \gamma \rangle ) \Vert^{2} \right]\\&=&\esp \left[ \Vert \mathbb{E} ( U  \mid  F_\gamma (\langle X, \gamma \rangle) ) \Vert^{2} \right]\\ &=&\mathbb{E}\left[\langle U, \mathbb{E}\left\{U \mid F_{\gamma}(\langle X,\gamma \rangle) \right\} \rangle \right].
\end{eqnarray*}

(B) First note that $\mathcal{A}_p \subset \bigcap_{j\geq 1}\mathcal{A}_{pj}$ where
$$
\mathcal{A}_{pj} = \{\gamma\in\mathcal{S}^p : \mathbb{E}(\langle U, \psi_j\rangle \mid \langle X, \gamma \rangle )=0 \,\, a.s.\, \}.
$$
Now, if $\mathbb{P} [\mathbb{E} (U\mid X) = 0] <1$, then  $\exists j\geq 1$ such that $\mathbb{P} [\mathbb{E} (\langle U, \psi_j\rangle \mid X) = 0] <1.$ Set $j\geq 1$ with this property and apply  Lemma 2.1 in Patilea, Saumard and Sanchez (2012) to  deduce that there exists $p_0\geq 1 $  such that, for any $p\geq p_0$, $\mathcal{A}_{pj}$  has Lebesgue measure zero on $\mathcal{S}^p$ and is not dense. Since $\mathcal{A}_p$ is included in any $\mathcal{A}_{pj}$, the conclusion follows.
\end{proof}

\bigskip

One of the ingredients we shall use for the proof of Lemma \ref{leem1} is an exponential bound for tail probabilities of $U-$statistics presented in Lemma \ref{gine_zinn} below and due to Gin\'e, Lata{\l}a and Zinn (2000). To state the result we shall use, let us  introduce some notation.
Let $Z_1,\cdots,Z_n$ be independent random variables (not necessarily with the same distribution) taking values in a measurable space $(\mathcal{Z}, \Upsilon )$.
Let $h_{i,j}(\cdot,\cdot)$, $1\leq i,j\leq n$ be real-valued measurable functions on $\mathcal{Z}^2$ such that $h_{i,j}(z_i,z_j) = h_{j,i}(z_j,z_i)$ and $\mathbb{E}[h_{i,j}(z_i,Z_j)]=0$,  $\forall 1\leq i,j\leq n$, $\forall z_i,z_j$. The functions $h_{i,j}$ could be different for different values of $n$. Define
\begin{equation}\label{norms_glz1}
A_n = \max_{i,j} \| h_{i,j}  (\cdot,\cdot) \|_\infty, \;\;\; B^2_n = \max_{j}\left\| \sum_{i} \mathbb{E}[h_{i,j}^2(Z_i,\cdot)]\right\|_\infty,  \;\;\; C_n^2 = \sum_{i,j} \mathbb{E}[h_{i,j}^2(Z_i,Z_j)],
\end{equation}
(herein $\|\cdot\|_\infty$ denote the sup norm) and
\begin{equation}\label{norms_glz2}
D_n = \sup\left\{ \mathbb{E}  \sum_{ i, j} h_{ i, j}(Z_i,Z_j)f_i(Z_i) g_j(Z_j) \; : \;
\mathbb{E}  \sum_{ i } f^2_i(Z_i)
  \leq 1, \; \mathbb{E}  \sum_{ j } g^2_j(Z_j)
   \leq 1 \right\}.
\end{equation}
The following result is a simplified version of Theorem 3.3 in Gin\'e, Lata{\l}a and Zinn (2000).

\begin{lem}\label{gine_zinn}
There exist a universal constant $L<\infty$ (in particular, independent on $n$ and the functions $h_{i,j}$) such that
$$
\mathbb{P} \left\{ \left|\sum_{1\leq i\neq j\leq n} h_{i,j}(Z_i,Z_j)  \right| \geq t \right\} \leq L \exp \left[ -\frac{1}{L} \min \left( \frac{t^2}{C_n^2}, \frac{t}{D_n}, \frac{t^{2/3}}{B_n^{2/3}}, \frac{t^{1/2}}{A_n^{1/2}}\right) \right], \qquad \forall t>0.
$$
\end{lem}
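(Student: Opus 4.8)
The plan is to obtain this as a streamlined consequence of the moment and exponential bounds for canonical (completely degenerate) $U$-statistics of order two due to Gin\'e, Lata{\l}a and Zinn (2000): their Theorem 3.3 is, in essence, exactly this four-regime Bernstein-type inequality, and only three cosmetic adjustments are needed. First, their bound is two-sided, for $\mathbb{P}\{|\sum_{i\neq j}h_{i,j}(Z_i,Z_j)|\geq t\}$; retaining only the upper tail is a trivial weakening. Second, they state the result for a single kernel and i.i.d.\ observations, whereas here the kernel is allowed to be an array $h_{i,j}$ and the $Z_i$ are merely independent; this is harmless, because the proof machinery — decoupling, Rademacher symmetrization, hypercontractivity of Rademacher chaos, and a moment recursion — never uses a common law, and the four parameters $A_n,B_n,C_n,D_n$ are precisely the quantities that enter that machinery. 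Third, several explicit numerical constants appear in their statement; we take $L$ to be the maximum of all of them together with their reciprocals, producing the single-constant form above.

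If one wants a proof that is essentially self-contained, the route would be as follows. By the de la Pe\~na--Montgomery-Smith decoupling inequality for $U$-statistics, valid because each $h_{i,j}$ is symmetric and $\mathbb{E}[h_{i,j}(z_i,Z_j)]=0$, it suffices to bound the tails of the decoupled chaos $\sum_{i,j}h_{i,j}(Z_i,Z_j')$, where $(Z_j')$ is an independent copy of $(Z_j)$; this alters $A_n,B_n,C_n,D_n$ only by universal factors. Introducing independent Rademacher signs on each coordinate and using hypercontractivity reduces the task to estimating the $L^p$-norms of a Rademacher chaos of order two, and a Khintchine-type recursion (applied successively in the two families of signs, keeping separate track of the variational $\ell^2$-to-$\ell^2$ operator norm, of the conditional sup-variances, and of the crude uniform bound) yields, for every $p\geq 2$, an estimate of the shape $\big\|\sum_{i\neq j}h_{i,j}(Z_i,Z_j)\big\|_{p}\leq L\big(\sqrt{p}\,C_n+p\,D_n+p^{3/2}B_n+p^{2}A_n\big)$.

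The last step converts this moment bound into the tail bound: by Markov's inequality the probability in question is at most $e^{-p}$ as soon as $t\geq eL(\sqrt{p}\,C_n+p\,D_n+p^{3/2}B_n+p^{2}A_n)$, so choosing $p$ equal to a fixed fraction of $\min\{t^2/C_n^2,\ t/D_n,\ (t/B_n)^{2/3},\ (t/A_n)^{1/2}\}$ and renaming constants gives exactly the claimed inequality. The main obstacle in a from-scratch argument is the term $D_n$: unlike the other three parameters, which come from scalar second-moment and sup estimates, $D_n$ is genuinely two-dimensional — it is the variational, $\ell^2$-to-$\ell^2$ operator norm of the bilinear form attached to the kernel, comparable to the typical spectral norm of the random matrix $(h_{i,j}(Z_i,Z_j'))$ — and controlling it requires a duality argument over the unit balls of $\ell^2$ together with a chaining/net argument. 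That is the technically delicate core of Gin\'e, Lata{\l}a and Zinn's Theorem 3.3, and it is where essentially all the work lies.
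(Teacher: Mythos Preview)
Your proposal is correct and aligns with the paper's treatment: the paper does not prove this lemma at all but simply states it as ``a simplified version of Theorem 3.3 in Gin\'e, Lata{\l}a and Zinn (2000).'' You go further by spelling out the cosmetic adjustments needed (one-sided tail, array kernels, independent rather than i.i.d.\ variables, absorbing constants into a single $L$) and by sketching the decoupling/hypercontractivity/moment-recursion route underlying the original result, which is more than the paper itself provides.
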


Let $\gamma\in\mathcal{S}^p$ and let $x_1,\cdots,x_n$ be an arbitrary collection of non-random points in $L^2[0,1].$
Consider $\widetilde Z_1,\cdots, \widetilde Z_n$ independent random variables with values in $L^2[0,1]$ such that for each $1\leq i\leq n $ the law of $\widetilde Z_i$
is the conditional law of $U_i$ given $X_i=x_i$.
We shall apply Lemma \ref{gine_zinn} with
$h_{i,i} \equiv 0$ and for $ 1\leq i\neq j\leq n$
\begin{equation}\label{hn_here}
h_{ i, j}(Z_i,Z_j) = \frac{\langle  Z_{i} \; ,  Z_{j}   \rangle}{n(n-1)hM^2}
K_{h}\left( F_{\gamma,n}(\langle x_{i},\gamma\rangle)- F_{\gamma,n}(\langle x_{j},\gamma\rangle) \right),
\end{equation}
where $ Z_{i} = \widetilde Z_i \mathbb{I}_{\{ \| \widetilde Z_i \| \leq M \}} - \mathbb{E}[\widetilde Z_i \mathbb{I}_{\{ \| \widetilde Z_i \| \leq M \}}]$, $M>0$ is some constant (that we shall allow to increase with $n$). (Note  in particular that the values $ \mathbb{E}[Z_i \mathbb{I}_{\{ \| Z_i \| \leq M \}}]$ coincide with the values $\mathbb{E}[U_i \mathbb{I}_{\{ \| U_i \| \leq M \}} \mid X_i = x_i].$)
Here $F_{\gamma,n}$ is the empirical d.f. of the sample $\langle x_1,\gamma \rangle,\cdots, \langle x_n,\gamma \rangle.$
The following lemma provides upper bounds for the quantities $A_n$ to $D_n$ in this setup. The bounds are independent of the collection
$x_1,\cdots,x_n\in L^2[0,1]$, and of $p\geq 1$ and $\gamma\in\mathcal{S}^p.$

The proof of the following lemma is given in the Supplementary Material.

\begin{lem}\label{upp_bds}
Under the conditions of Lemma \ref{leem1}, for $h_{i,j}$  defined as in (\ref{hn_here})
$$
A_n = \frac{\|K\|_\infty}{n (n-1)h}, \quad B^2_n \leq  \frac{c}{n^3h M^2}, \quad C^2_n \leq  \frac{c }{n^2hM^4} \quad \text{and} \quad D_n \leq  \frac{c}{n M^2},
$$
for some constant $c,$ only depending on the upper bound of $\mathbb{E}(\|U\|^2\mid X)$ and $\int K^2$.
\end{lem}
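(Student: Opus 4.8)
The plan is to bound each of the four quantities directly, using two structural observations. First, by construction $\|Z_i\|\le 2M$ a.s. and the $Z_i$ are independent (each is a fixed measurable transformation of the independent $\widetilde Z_i$); moreover, writing $C=\sup_x\mathbb{E}(\|U\|^2\mid X=x)$, Jensen's inequality applied to Assumption \ref{D}(c)(ii) (recall $\nu>2$) gives $\mathbb{E}\|Z_i\|^2\le\mathbb{E}\|\widetilde Z_i\|^2=\mathbb{E}(\|U_i\|^2\mid X_i=x_i)\le C<\infty$, uniformly in $i$. Second, since $F_{\gamma,n}$ is the empirical d.f. of $\langle x_1,\gamma\rangle,\dots,\langle x_n,\gamma\rangle$ and (after breaking ties) these values are distinct, the numbers $F_{\gamma,n}(\langle x_i,\gamma\rangle)$, $1\le i\le n$, are exactly $1/n,2/n,\dots,n/n$ arranged according to the ranks; hence $F_{\gamma,n}(\langle x_i,\gamma\rangle)-F_{\gamma,n}(\langle x_j,\gamma\rangle)=(R_i-R_j)/n$, where $R_i\in\{1,\dots,n\}$ is the rank of $\langle x_i,\gamma\rangle$. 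This turns every sum of kernel evaluations into a sum over integer arguments and removes all dependence on $x_1,\dots,x_n$, $p$ and $\gamma$, as claimed.

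The key auxiliary estimate is that, for $m\in\{1,2\}$ and every fixed $j$,
$$
\sum_{1\le i\le n,\ i\neq j} K_h^m\!\left(\tfrac{R_i-R_j}{n}\right)\ \le\ \sum_{\ell\in\mathbb{Z}} K^m\!\left(\tfrac{\ell}{nh}\right)\ \le\ \|K\|_\infty^m+2\,nh\!\int_0^\infty K^m(z)\,dz\ \le\ c\,nh
$$
for $n$ large, the constant $c$ depending only on $\|K\|_\infty$ and $\int K^m$ (note $\int K^2\le\|K\|_\infty$). This follows, as in the proof of Lemma \ref{db_integr}, from the symmetry and monotonicity of $K$. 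Granting this: the bound on $A_n$ is immediate from $|\langle Z_i,Z_j\rangle|\le\|Z_i\|\,\|Z_j\|\le 4M^2$ and $|K_h|\le\|K\|_\infty$ (the harmless constant being absorbed). For $B_n^2$ and $C_n^2$ one uses $\mathbb{E}\langle Z_i,z_j\rangle^2\le\mathbb{E}\|Z_i\|^2\,\|z_j\|^2\le 4CM^2$ for $\|z_j\|\le 2M$, and, by independence, $\mathbb{E}\langle Z_i,Z_j\rangle^2\le\mathbb{E}\|Z_i\|^2\,\mathbb{E}\|Z_j\|^2\le C^2$; substituting these into the definitions, summing over $i$ (resp. over $i\neq j$) and invoking the displayed estimate with $m=2$ gives $B_n^2\le c/(n^3hM^2)$ and $C_n^2\le c/(n^2hM^4)$ after collecting the powers of $n$, $h$, $M$.

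For $D_n$ the idea is to decouple via independence. Since $h_{i,j}(Z_i,Z_j)=w_{ij}\,\langle Z_i,Z_j\rangle$ with the deterministic, symmetric weights $w_{ij}=[n(n-1)hM^2]^{-1}K_h((R_i-R_j)/n)=w_{ji}$, and $Z_i$ is independent of $Z_j$ for $i\neq j$,
$$
\mathbb{E}\!\!\sum_{1\le i\neq j\le n} h_{i,j}(Z_i,Z_j)f_i(Z_i)g_j(Z_j)=\sum_{1\le i\neq j\le n} w_{ij}\,\big\langle \mathbb{E}[Z_if_i(Z_i)]\,,\ \mathbb{E}[Z_jg_j(Z_j)]\big\rangle .
$$
Cauchy--Schwarz in $L^2[0,1]$ gives $\|\mathbb{E}[Z_if_i(Z_i)]\|\le C^{1/2}\phi_i$ with $\phi_i=(\mathbb{E}f_i^2(Z_i))^{1/2}$, and likewise $\psi_j$ for the $g$'s, where $\sum_i\phi_i^2\le1$ and $\sum_j\psi_j^2\le1$. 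Hence the right-hand side is at most $C\sum_{i,j}|w_{ij}|\phi_i\psi_j\le C\,\|(|w_{ij}|)\|_{\mathrm{op}}$, and, the matrix $(|w_{ij}|)$ being symmetric, the Schur test bounds its operator norm by its maximal row sum, $\max_i\sum_j|w_{ij}|=[n(n-1)hM^2]^{-1}\max_i\sum_j K_h((R_i-R_j)/n)\le c/(nM^2)$ by the displayed estimate with $m=1$. This yields $D_n\le c/(nM^2)$, and all constants depend only on $\sup_x\mathbb{E}(\|U\|^2\mid X=x)$, $\|K\|_\infty$ and $\int K^2$, as asserted.

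\textbf{Main obstacle.} The only genuinely delicate point is the treatment of $D_n$: one must notice that, thanks to the independence of $Z_i$ and $Z_j$, the supremum defining $D_n$ collapses to the operator norm of an explicit deterministic (and symmetric) weight matrix, which can then be controlled by a Schur/row-sum argument; everything else is bookkeeping of powers of $n$, $h$, $M$ combined with the kernel-sum estimate, itself a rerun of the computation in Lemma \ref{db_integr}.
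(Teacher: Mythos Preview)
Your proof is correct and follows essentially the same route as the paper: bound the moments via Cauchy--Schwarz and the uniform bound on $\mathbb{E}(\|U\|^2\mid X)$, convert all kernel sums to sums over integer rank differences (your ``key auxiliary estimate'' is exactly what Lemma~\ref{db_integr} supplies), and for $D_n$ reduce to the operator norm of the deterministic weight matrix, controlled by its maximal row sum. The only cosmetic difference is that for $D_n$ you factor the expectation through independence \emph{before} applying Cauchy--Schwarz, whereas the paper takes absolute values first; both arrive at the same row-sum bound on the spectral norm, which the paper writes out explicitly in~(\ref{spectral}).
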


\bigskip

Another ingredient for the proof of Lemma \ref{leem1} is an upper bound for the number of different possible orderings in the sample $\langle X_1,\gamma \rangle, \cdots, \langle X_n,\gamma \rangle$ when $\gamma$ belongs to the unit hypersphere in $\mathbb{R}^p.$  Let $w_1,\cdots, w_n$ be a collection of $n$ points in $\mathbb{R}^p$ and let $\pi$ be a permutation of the set of integers $\{1,2,\cdots,n\}$. Following Cover (1967), we say that $\gamma \in \mathcal{S}^p$ induces the ordering $\pi$ if
$$
\langle w_{\pi(1)}, \gamma \rangle < \langle w_{\pi(2)}, \gamma \rangle < \cdots < \langle w_{\pi(n)}, \gamma \rangle.
$$
Conversely, the ordering $\pi$ will be said to be linearly inducible if such a vector $\gamma$ exists. The following result is due to Cover (1967).

\begin{lem}\label{cover67}
There are precisely $q(n,p)$ linearly inducible orderings of $n$ points in general position in $\mathbb{R}^p$, where
$$
q(n,p) =  2 \sum_{k=0}^{p-1} S_{n,k} = 2 \left[ 1 + \sum_{2\leq i \leq n-1} i + \sum_{2\leq i < j \leq n-1} ij + \cdots \right] \quad (\text{$p$ terms}),
$$
where $ S_{n,k}$ is the number of the $(n-2)! / (n-2 -k)! k!$ possible products of numbers taken $k$ at a time without repetition from the set $\{ 2,3,\cdots,n-1 \}$
\end{lem}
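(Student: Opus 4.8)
The plan is to reinterpret $q(n,p)$ as the number of regions of a central hyperplane arrangement and then to obtain a recursion by deletion–restriction. Fix $x_1,\dots,x_n$ in general position in $\mathbb{R}^p$ and, for $1\le i<j\le n$, set $H_{ij}=\{\gamma\in\mathbb{R}^p:\langle x_i-x_j,\gamma\rangle=0\}$. The vector $\gamma$ induces an ordering according to the signs of the quantities $\langle x_i-x_j,\gamma\rangle$, so $\gamma$ and $\gamma'$ induce the same strict ordering if and only if they belong to the same connected component of $\mathbb{R}^p\setminus\bigcup_{i<j}H_{ij}$; general position ensures every such component is a full-dimensional open cone on which the $\langle x_i,\gamma\rangle$ are pairwise distinct, hence corresponds to exactly one linearly inducible ordering. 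Thus $q(n,p)$ equals the number of regions of the central arrangement $\mathcal{A}_n=\{H_{ij}:1\le i<j\le n\}$ in $\mathbb{R}^p$.

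Next I would prove the recursion $q(n,p)=q(n-1,p)+(n-1)\,q(n-1,p-1)$. Delete from $\mathcal{A}_n$ the $n-1$ hyperplanes $H_{1n},\dots,H_{n-1,n}$ that involve the index $n$; what is left is the arrangement attached to $x_1,\dots,x_{n-1}$, with $q(n-1,p)$ regions. Re-insert $H_{1n},\dots,H_{n-1,n}$ one at a time, using the standard fact that inserting a hyperplane $H$ into an arrangement increases the number of regions by the number of regions of the trace (restriction) arrangement on $H$. The insertion of $H_{kn}$ — performed after $H_{1n},\dots,H_{k-1,n}$ and all $H_{ij}$, $i<j\le n-1$, are present — adds as many regions as the arrangement on $H_{kn}$ cut by the traces $H_{kn}\cap H_{ij}$ ($1\le i<j\le n-1$) and $H_{kn}\cap H_{in}$ ($i<k$). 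The identity $H_{kn}\cap H_{in}=H_{kn}\cap H_{ik}$ shows the latter traces are already listed among the former, so the trace arrangement on $H_{kn}$ is $\{H_{kn}\cap H_{ij}:1\le i<j\le n-1\}$. Identifying $H_{kn}$ with $\mathbb{R}^{p-1}$ and each $x_i$ ($i\le n-1$) with its orthogonal projection $\bar x_i$ onto $H_{kn}$, this is exactly the arrangement $\{(\bar x_i-\bar x_j)^\perp\}$ of the $n-1$ points $\bar x_1,\dots,\bar x_{n-1}$ in $\mathbb{R}^{p-1}$, which (for generic $x_i$) are again in general position, so it has $q(n-1,p-1)$ regions. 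Summing over the $n-1$ insertions yields the recursion.

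It remains to check that the stated closed form solves the recursion. Write $S_{n,k}$ for the $k$-th elementary symmetric function of $2,3,\dots,n-1$ (with $S_{n,0}=1$ and $S_{n,k}=0$ for $k<0$ or $k>n-2$). Introducing the new variable $n-1$ gives $S_{n,k}=S_{n-1,k}+(n-1)S_{n-1,k-1}$, whence
$$2\sum_{k=0}^{p-1}S_{n,k}=2\sum_{k=0}^{p-1}S_{n-1,k}+2(n-1)\sum_{k=0}^{p-2}S_{n-1,k}=q(n-1,p)+(n-1)\,q(n-1,p-1),$$
so $q(n,p)=2\sum_{k=0}^{p-1}S_{n,k}$ obeys the same recursion; together with the base cases $q(n,1)=2$ (only an ordering and its reverse on a line) and $q(2,p)=2$, an induction on $n$ completes the proof. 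As a consistency check, for $p\ge n-1$ this gives $2\sum_{k=0}^{n-2}S_{n,k}=2\prod_{i=2}^{n-1}(1+i)=n!$, the full braid arrangement.

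The step I expect to be the genuine obstacle is the precise handling of the general-position hypothesis. One must fix a clean genericity condition on $x_1,\dots,x_n$ (roughly: no linear relations among the differences $x_i-x_j$ beyond the forced ones $(x_i-x_j)+(x_j-x_k)+(x_k-x_i)=0$, up to the relevant dimension), and then verify that this condition is inherited both by the sub-family $x_1,\dots,x_{n-1}$ and, crucially, by the orthogonal projections $\bar x_1,\dots,\bar x_{n-1}$ onto each $H_{kn}$; only then are the traces $H_{kn}\cap H_{ij}$ distinct proper hyperplanes of $H_{kn}$ and the region count of each trace arrangement exactly $q(n-1,p-1)$, with no accidental coincidences. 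Establishing that this property propagates under both operations (and that almost every configuration enjoys it) is the delicate part of the argument; everything else is the bookkeeping above.
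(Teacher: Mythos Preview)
The paper does not prove this lemma; it is quoted verbatim from Cover (1967) and used as a black box. Your proposal is therefore not being compared against anything in the paper, but it is a correct and complete argument in its own right.

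Your approach---identifying linearly inducible orderings with regions of the central arrangement $\{H_{ij}\}$ and deriving the recursion $q(n,p)=q(n-1,p)+(n-1)\,q(n-1,p-1)$ by deletion--restriction---is essentially Cover's own proof recast in the modern language of hyperplane arrangements. Cover obtains the same recursion by adding a new point and counting how many existing orderings split; your trace-arrangement computation is the same count done dually. The closed-form check via the elementary-symmetric recursion $S_{n,k}=S_{n-1,k}+(n-1)S_{n-1,k-1}$ and the base cases are all correct, as is the consistency check $q(n,p)=n!$ for $p\ge n-1$.

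You are also right that the only substantive issue is making ``general position'' precise and showing it is inherited under restriction to each $H_{kn}$. This is exactly where Cover's paper does the work, and your formulation (no linear relations among the $x_i-x_j$ beyond the cyclic ones, up to dimension $p$) is the correct one; verifying that the projections $\bar x_1,\dots,\bar x_{n-1}$ onto $H_{kn}$ remain generic is a routine Zariski-open/measure-zero argument once the condition is stated cleanly.
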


By Lemma \ref{cover67} we obtain a simple upper bound for $q(n,p)$ when $n\geq 2p$, \emph{i.e.,}
\begin{equation}
q(n,p) \leq 2[ 1+n^2+\cdots + n^{p}] \leq n^{p+1}.
\end{equation}

\bigskip

\begin{proof}[Proof of Lemma \ref{leem1}]
Set $M$ that depends on $n$ in a way that will be specified below.
Let
\begin{equation*}
Q_{M,n}\left(\gamma \right) =\frac{1}{n(n-1)}\sum\limits_{1\leq i\neq
j\leq n}\langle U_{M,i} , U_{M,j} \rangle \frac{1}{h}
K_{h}\left( F_{\gamma,n}(\langle X_{i},\gamma\rangle)- F_{\gamma,n}(\langle X_{j},\gamma\rangle) \right),\quad \gamma\in\mathcal{S}^p,
\end{equation*}
where $ U_{M,i}=U_i \mathbb{I}_{\{ \| U_i \| \leq M \}} - \mathbb{E}[ U_i \mathbb{I}_{\{ \| U_i \| \leq M \}}\mid X_i].$
We can write
$$
\mathbb{P}\left( \sup_{\gamma \in \mathcal{S}^p}|Q_{M,n}(\gamma)| > \frac{t  p\ln n}{nh^{1/2}} \right) = \mathbb{E} \left[\mathbb{P}\left( \sup_{\gamma \in \mathcal{S}^p}|Q_{M,n}(\gamma)| >\frac{t  p\ln n}{nh^{1/2}}\mid  X_1,\cdots, X_n\right)\right]
$$
In view of Lemma \ref{cover67}, for any $n,p$, given $X_1,\cdots, X_n$  there exists a set $\mathcal{O}_{np}\subset \mathbb{R}^p$ with at most $n^p$ elements, that depend on $X_1,\cdots, X_n$, such that
$$
\sup_{\gamma \in \mathcal{S}^p}|Q_{M,n}(\gamma)| = \sup_{\gamma \in \mathcal{O}_{np}}|Q_{M,n}(\gamma)|.
$$
Let $b_n = M^{-2} n^{-1}h^{-1/2}p\ln n$. By Lemmas \ref{gine_zinn} and \ref{cover67}, and Boole's inequality, deduce that there exists an universal constant $L$ such that for any $t>0$,
\begin{multline*}
\mathbb{P}\!\left( \sup_{\gamma \in \mathcal{S}^p}|Q_{M,n}(\gamma)| >\frac{t  p\ln n}{nh^{1/2}} \mid  X_1,\cdots, X_n \right)
 \leq \! \sum_{\gamma \in \mathcal{O}_{np}}
\!\mathbb{P}\!\left( |M^{-2}Q_{M,n}(\gamma)| > t  b_n \mid  X_1,\cdots, X_n\right)\\
\leq \max\{L,1\} \exp \left[ (p+1)\ln n -\frac{1}{L}  \min \left( \frac{(tb_n)^2}{C_n^2}, \frac{tb_n}{D_n}, \frac{(tb_n)^{2/3}}{B_n^{2/3}}, \frac{(tb_n)^{1/2}}{A_n^{1/2}}\right) \right].
\end{multline*}
Now, take $M = n^{1/4 - a}$ for some (small) $a>0$ and note that the exponential bound in the last display is independent of $X_1,\cdots, X_n$ and tends to zero for any $t$. Deduce that
$$
\sup_{\gamma \in \mathcal{S}^p}|Q_{M,n}(\gamma)| = O_{\mathbb{P}} (n^{-1}h^{-1/2}p\ln n).
$$
We next show that $\sup_{\gamma \in \mathcal{S}^p}|Q_{n}(\gamma) -Q_{M,n}(\gamma)| = o_{\mathbb{P}} (n^{-1}h^{-1/2}p\ln n)$. Let
\begin{equation*}
R_{1n}(\gamma) = \frac{1}{n(n-1)}\sum\limits_{1\leq i\neq
j\leq n}\!\!\langle U_{M,i} , U_j -  U_{M,j} \rangle \frac{1}{h}
K_{h}\left( F_{\gamma,n}(\langle X_{i},\gamma\rangle)\!-\! F_{\gamma,n}(\langle X_{j},\gamma\rangle) \right),\quad \!\gamma\in\mathcal{S}^p,
\end{equation*}
and
$R_{2n}(\gamma) = Q_{n}(\gamma) - Q_{M,n}(\gamma) - 2R_{1n}(\gamma). $
Since $K(\cdot)$ is bounded, we have
\begin{multline*}
\mathbb{E} \left[\sup_{\gamma }\left\vert R_{1n}(\gamma)\right\vert\mid X_1,\cdots,X_n\right]
\leq Ch^{-1}\mathbb{E}\left( \| U_{M,i} \| \| U_j -U_{M,j}
\| \right) \\
\leq 2Ch^{-1} \mathbb{E}\left( \| U
_{i}\| \right) \mathbb{E}\left( \| U_j -U_{M,j}\|
\right) .
\end{multline*}
By the H\"{o}lder inequality and the Chebyshev inequality
\begin{equation*}
\mathbb{E}\left(  \| U_j -U_{M,j}\| \right)  \leq 2\mathbb{E}^{1/m}\left[
\| U _{j}\| ^{m}\right] \mathbb{P}^{(m-1)/m}\left[
\| U _{j}\| >M\right] \leq 2\mathbb{E}\left[
\| U _{j}\| ^{m}\right] \,M^{1-m}.
\end{equation*}
Now, to deduce that $R_{1n}(\gamma)$ is uniformly negligible, it suffices to note that under Assumption \ref{K}-(b), for $m>7$ and $a$ sufficiently small,  $$M^{1-m}= n^{(1-m)(1/4 - a)} = o\left( n^{-1}h^{1/2}p\ln n\right) .$$ Clearly,  $\sup_{\gamma }|R_{2n}(\gamma)|$ is of smaller order than $\sup_{\gamma }|R_{1n}(\gamma)|$.

For the inverse of the variance estimator, for any $\gamma\in\mathcal{S}^p,$ let us define
\begin{equation*}
\widehat{v}_{N,n}^{2}( \gamma) =\frac{2}{n(n-1)h}\sum\limits_{j\neq
i}\langle U_{i},U_{j}\rangle^2 \mathbb{I}_{\{ \langle U_{i},U_{j}\rangle^2\leq N \}}
K_{h}^{2}\left( F_{\gamma,n}(\langle X_{i},\gamma\rangle)- F_{\gamma,n}(\langle X_{j},\gamma\rangle) \right).
\end{equation*}
Using the H\"{o}lder inequality,  the Chebyshev inequality and the Cauchy-Schwarz inequality,
\begin{eqnarray*}
\mathbb{E} \left[\sup_{\gamma }\left\vert \widehat{v}_{n}^{2}( \gamma)  - \widehat{v}_{N,n}^{2}( \gamma) \right\vert  \mid X_1,\cdots,X_n\right]
&\leq &\!\! Ch^{-1}\mathbb{E}\left(  \langle U_{i},U_{j}\rangle^2 \mathbb{I}_{\{ \langle U_{i},U_{j}\rangle^2 > N \}}
 \right) \\
 &\leq & \!\! h^{-1}\mathbb{E}^{1/s}\left[
\langle U_{i},U_{j}\rangle^{2s}\right] \mathbb{P}^{(s-1)/s}\left[
\langle U_{i},U_{j}\rangle^{2s} >N^s \right] \\
&\leq & \!\! h^{-1}\mathbb{E}^2\left[
\| U _{j}\| ^{2s}\right] \,N^{1-s}.
\end{eqnarray*}
Take $s=4,$ $N= n^{1/4}$ and deduce that the right bound in the last display tends to zero.
On the other hand, we apply the Hoeffding (1963) inequality for $U-$statistics to control the deviations of $\widehat{v}_{N,n}^{2}( \gamma) - \mathbb{E}[ \widehat{v}_{N,n}^{2}( \gamma) \mid X_1,\cdots,X_n]$ conditionally on $X_1,\cdots,X_n.$ For any fixed $\gamma$ we have
\begin{multline*}
\mathbb{P}\left( n^{1/2}h |\widehat{v}_{N,n}^{2}( \gamma) - \mathbb{E}[ \widehat{v}_{N,n}^{2}( \gamma) \mid  X_1,\cdots,X_n]|  \geq t \mid  X_1,\cdots,X_n \right)\\ \leq 2 \exp\left\{ - \frac{[n/2]n^{-1} t^2 }{2[\tau^2 +  K^2(0) N n^{-1/2} t/3]} \right\}
\end{multline*}
where $\tau^2$ is the variance of a term in the sum defining $h\widehat{v}_{N,n}^{2}( \gamma) - \mathbb{E}[ h\widehat{v}_{N,n}^{2}( \gamma) \mid X_1,\cdots,X_n]$.
Take $t=n^{1/2 -c}h$ for some small $c>0$ and note that $\tau^2 \leq C $ for some constant independent of  $\gamma$ and $h$.
In the similar way as we did for $Q_{M,n}(\gamma)$, applying Lemma \ref{cover67}, we obtain an exponential bound for the tail of
$\widehat{v}_{N,n}^{2}( \gamma) - \mathbb{E}[ \widehat{v}_{N,n}^{2}( \gamma) \mid X_1,\cdots,X_n]$ given $X_1,\cdots,X_n$
\emph{uniformly} with respect to $\gamma.$ This bound is independent of  $X_1,\cdots,X_n.$  Deduce that
$$
\sup_{\gamma }|\widehat{v}_{N,n}^{2}( \gamma) - \mathbb{E}[ \widehat{v}_{N,n}^{2}( \gamma) \mid  X_1,\cdots,X_n]|  = o_{\mathbb{P}} (1),
$$
conditionally on  $X_1,\cdots,X_n$ and unconditionally.
It remains to  note that Assumption \ref{D}-(c) and the first part of Lemma \ref{db_integr} in the Supplementary Material guarantee that $\mathbb{E}[ \widehat{v}_{N,n}^{2}( \gamma) \mid  X_1,\cdots,X_n]$ stays away from zero. Taking all the results together, conclude that $1/\widehat{v}_{n}^{2}( \gamma)$ is uniformly bounded in probability.
\end{proof}

\bigskip

The proof of Lemma \ref{beta} is similar to the proof of Lemma 3.2 in Lavergne and Patilea (2008) and hence will be omitted.
The proofs of Theorem \ref{as_law}, \ref{altern}, \ref{bbot} and Corollary \ref{FPC} are given in the Supplementary Material.

\newpage

\begin{center}
{\small REFERENCES }
\end{center}

{\small \parindent=0cm }

\newcounter{refs}
\begin{list}{}{\usecounter{refs}
\setlength{\labelwidth}{0in}
\setlength{\labelsep}{0in}\setlength{\leftmargin}{0in}
\setlength{\rightmargin}{0in} \setlength{\topsep}{0in}
\setlength{\partopsep}{0in} \setlength{\itemsep}{0cm} }

\item {\footnotesize \textsc{Antoch, J., Prchal, L., De Rosa, M., and Sarda, P.} (2008) Functional linear regression with functional response: application to prediction of electricity consumption.  \textsl{International Workshop on Functional and Operatorial Statistics 2008 Proceedings, Functional and operatorial statistics, Dabo-Niang and Ferraty (Eds.), Physica-Verlag, Springer.}}


\item {\footnotesize \textsc{Bosq, D.} (2000). \textsl{Linear Processes in Function Spaces: Theory and Applications}. Lecture Notes in Statistics (v. 149). Springer-Verlag, New-York.}

\item {\footnotesize \textsc{Chiou, J-M., and M\"{u}ller, H-G.} (2007).
Diagnostics for Functional Regression
via Residual Processes. \textsl{Comput. Statist. Data Anal.} \textbf{15},
4849--4863.}

\item {\footnotesize \textsc{Chiou, J.H., M\"{u}ller, H.G., Wang, J.L., and Carey, J.R.} (2003). A functional
multiplicative effects model for longitudinal data, with applications to
reproductive histories of female medflies. \textsl{Statist. Sinica} \textbf{13},  1119--1133.}



\item {\footnotesize \textsc{Cover, T.M.} (1967). The number of linearly inducible orderings in $d-$space. \textsl{SIAM J. Appl. Math.} \textbf{15}, 434--439.}

\item {\footnotesize \textsc{Crambes, C., and Mas, A.} (2009). Asymptotics of prediction in the functional linear regression with functional outputs.
arXiv:0910.3070v3 [math.ST]}

\item {\footnotesize \textsc{Cuesta-Albertos, J.A., del Barrio, E., Fraiman, R., and Matr\'{a}n, C.}
(2007). The random projection method in goodness of fit for functional data.
\textsl{Comput. Statist. Data Anal.} \textbf{51},
4814--4831.}

\item {\footnotesize \textsc{Cuesta-Albertos, E., Fraiman, R., and Ransford, T.}
(2007). A sharp form of the Cram\'er-Wold theorem.
\textsl{J. Theoret. Probab.} \textbf{20},
201--209.}

\item {\footnotesize \textsc{Cuevas, A., Febrero, M., and Fraiman, R.} (2002). Linear functional regression: The case of fixed design and functional response. \textsl{Canadian J. Statist.} \textbf{30}, 285--300. }

\item {\footnotesize \textsc{Delsol, L., Ferraty, F., and Vieu, P.} (2011).
Structural test in regression on functional variables. \textsl{J. Multivariate Anal.} \textbf{102}, 422--447.}

\item {\footnotesize \textsc{de Jong, P.} (1987). A central limit theorem for generalized quadratic forms. \textsl{Probab. Theory Related Fields} \textbf{75}, 261--277.}

\item {\footnotesize \textsc{Fan, Y., and Li, Q.} (1996). Consistent model
specification tests: omitted variables and semiparametric functional forms.
\textsl{Econometrica} \textbf{64}, 865--890.}

\item {\footnotesize \textsc{Ferraty, F., Laksaci, A., Tadj, A., and Vieu, P.} (2011).
Kernel regression with functional response. \textsl{Electron. J. Stat.} \textbf{5}, 159--171.}

\item {\footnotesize \textsc{Ferraty, F., Van Keilegom, I., and Vieu, P.} (2012). Regression when both response and predictor are functions. \textsl{J. Multivariate Anal.} \textbf{109}, 10--28.}

\item {\footnotesize \textsc{Ferraty, F., and Vieu, P.} (2006).
     \textsl{Nonparametric Functional Data Analysis: Theory and Practice}.
     Springer,  Berlin.}

\item {\footnotesize \textsc{Gabrys, R., Horv\'ath, L.,  and Kokoszka, P.} (2010). Tests for error correlation in the functional linear model. \textsl{J. Amer. Statist.  Assoc.} \textbf{105}, 1113--1125.}

\item {\footnotesize \textsc{Gao, J., and Gijbels, I.} (2008). Bandwidth selection in nonparametric kernel testing. \textsl{J. Amer. Statist.  Assoc.} \textbf{103}, 1584--1594.}

\item {\footnotesize \textsc{Garc\'{i}a-Portugu\'{e}s, E., Gonz\'{a}lez-Manteiga, W., and Febrero-Bande, M.} (2012)
A goodness-of-fit test for the functional linear model with scalar response. arXiv:1205.6167v3 [stat.ME]}

\item {\footnotesize \textsc{Gin\'{e}, E., Lata{\l}a, R., and Zinn, J.} (2000). Exponential and moment inequalities for $U-$statistics. In \textsl{High Dimensional II} 13--38. Progr. Probab. \textbf{47}. Birkh\"{a}user, Boston. }


\item {\footnotesize \textsc{H\"{a}rdle, W., and Mammen, E.} (1993).
Comparing nonparametric versus parametric regression fits. \textsl{Ann.
Statist.} \textbf{21}, 1296-1947. }

\item {\footnotesize \textsc{Hoeffding, W.} (1963). Probability inequalities for sums of bounded random variables. \textsl{J. Amer. Statist.  Assoc.} \textbf{58}, 13--30.}

\item {\footnotesize \textsc{Horowitz, J., and Spokoiny, V.} (2001). An adaptive rate-optimal test of a parametric mean-regression model against a nonparametric alternative.  \textsl{Econometrica} \textbf{69}, 599--633.
}

 \item {\footnotesize \textsc{Horv\'{a}th, L., and Kokoszka, P.} (2012). \textsl{Inference for Functional Data with Applications.} Springer Series in Statistics. Springer, New-York. }

\item {\footnotesize \textsc{Kokoszka, P., Maslova, I., Sojka, J., and Zhu, L.} (2008). Testing for lack of dependence in the functional linear model. \textsl{Canadian J. Statist.} \textbf{36}, 1--16. }

\item {\footnotesize \textsc{Lavergne, P., and Patilea, V.} (2008). Breaking the curse of dimensionality in nonparametric testing. \textsl{J. Econometrics} \textbf{143}, 103--122. }

\item {\footnotesize \textsc{Lian, H.} (2011).
Convergence of functional $k-$nearest neighbor regression estimate with functional responses. \textsl{Electron. J. Statist.} \textbf{5}, 31--40.}

\item {\footnotesize \textsc{Mammen, E.} (1993). Bootstrap and Wild Bootstrap for High Dimensional Linear Models. \textsl{Ann. Statist.} \textbf{21}, 255--285.}


\item {\footnotesize \textsc{Patilea, V., S\'{a}nchez-Sellero, C., and Saumard, M.} (2012). Projection-based nonparametric goodness-of-fit testing with functional covariates.
arXiv:1205.5578  [math.ST]}

\item {\footnotesize \textsc{Ramsay, J., and Silverman, B.W.} (2005).
     \textsl{Functional Data Analysis} (2nd ed.).
     Sprin\-ger-Verlag, New York.}


\item {\footnotesize \textsc{Stute, W.} (1984). Asymptotic normality of nearest neighbor regression function estimates. \textsl{Ann. Statist.} \textbf{12}, 917--926.}

\item {\footnotesize \textsc{Stute, W., Gonz\'alez-Manteiga, W.} (1996). NN goodness-of-fit tests for linear models.  \textsl{J. Statist. Plann. Inference} \textbf{53}, 75--92.}


\item{\footnotesize \textsc{Yang, S.S.} (1981). Linear Functions of Concomitants of Order Statistics with Application to Nonparametric Estimation of a Regression Function. \textsl{J. Amer. Statist.  Assoc.} \textbf{76}, 658--662.}

\item {\footnotesize \textsc{Yao, F.,  M\"{u}ller, H.G., and Wang, J-L.} (2005). Functional linear regression analysis for longitudinal data. \textsl{Ann. Statist.} \textbf{33}, 2873--2903.}

\item {\footnotesize \textsc{Zheng, J.X.} (1996). A consistent test of
functional form via nonparametric estimation techniques. \textsl{J.
Econometrics} \textbf{75}, 263--289. }

\end{list}

\newpage

$$\begin{tabular}{cccccccccc}
&&\multicolumn{2}{c}{Level\ $=10\%$} && \multicolumn{2}{c}{Level\ $=5\%$}
&& \multicolumn{2}{c}{Level\ $=1\%$} \\
Hypothesis & $c_h$ & $n=100$ & $n=200$ && $n=100$ & $n=200$ && $n=100$ & $n=200$ \\
\hline  &&&&&&&&& \\ 
    Null    & 0.75 &  11.7 &  9.9 &&  6.0 &  5.1 &&  1.5 &  0.8 \\
            &  1.0 &  12.2 & 10.2 &&  5.6 &  5.0 &&  1.1 &  1.0 \\
         \vspace*{1.5mm}
            & 1.25 &  12.2 & 10.1 &&  5.7 &  5.0 &&  1.1 &  1.2 \\
Alternative & 0.75 &  66.3 & 90.9 && 53.7 & 84.1 && 27.6 & 65.6 \\
            &  1.0 &  70.4 & 91.5 && 57.6 & 85.6 && 29.4 & 69.4 \\
         \vspace*{1.5mm}
            & 1.25 &  73.1 & 93.1 && 59.9 & 87.1 && 31.4 & 72.7 \\
 \hline
\end{tabular}$$
\begin{center}
{\bf Table 1.} Percentages of rejections for the new test under the univariate predictor model.
\end{center}

\qquad

\qquad

\qquad

$$\begin{tabular}{cccccccccc}
&&\multicolumn{2}{c}{Level\ $=10\%$} && \multicolumn{2}{c}{Level\ $=5\%$}
&& \multicolumn{2}{c}{Level\ $=1\%$} \\
$c$ & Test & $n=40$ & $n=100$ && $n=40$ & $n=100$ && $n=40$ & $n=100$ \\
\hline &&&&&&&&& \\ 
 0    &  New & 10.5 & 11.1 &&  4.9 &  5.6 &&  1.2 & 0.8 \\ \vspace*{1.5mm}
      & KMSZ &  7.1 &  8.0 &&  3.2 &  3.5 &&  0.0 & 0.7 \\
 0.25 &  New & 18.5 & 31.7 && 10.4 & 21.7 &&  3.4 & 8.7 \\ \vspace*{1.5mm}
      & KMSZ & 14.5 & 27.1 &&  5.9 & 17.2 &&  0.5 & 3.7 \\
 0.50 &  New & 44.5 & 85.2 && 33.7 & 78.1 && 14.1 & 59.4 \\ \vspace*{1.5mm}
      & KMSZ & 45.7 & 85.3 && 27.4 & 75.0 &&  6.9 & 51.1 \\
 0.75 &  New & 79.3 & 99.7 && 70.1 & 99.4 && 45.5 & 97.8 \\ \vspace*{1.5mm}
      & KMSZ & 83.0 & 99.9 && 70.5 & 99.7 && 36.7 & 98.1 \\
 \hline
\end{tabular}$$
\begin{center}
{\bf Table 2.} Percentages of rejections for the new test and Kokoszka \emph{et al.} (2008)'s test under the null hypothesis ($c=0$) and different deviations (represented by $c$) under the alternative.
\end{center}
\vspace*{3mm}

\newpage

$$\begin{tabular}{cccccccccc}
&&\multicolumn{2}{c}{Level\ $=10\%$} && \multicolumn{2}{c}{Level\ $=5\%$}
&& \multicolumn{2}{c}{Level\ $=1\%$} \\
$p$ & Test & $n=40$ & $n=100$ && $n=40$ & $n=100$ && $n=40$ & $n=100$ \\
\hline &&&&&&&&& \\
 2 &  New &  9.7 & 10.6 && 5.1 & 5.2 && 0.5 & 1.1 \\ \vspace*{1.5mm}
   & KMSZ &  9.1 &  9.7 && 3.9 & 5.3 && 0.9 & 0.4 \\
 3 &  New &  9.6 & 10.7 && 5.2 & 5.7 && 1.0 & 1.3 \\ \vspace*{1.5mm}
   & KMSZ &  9.6 & 10.7 && 4.0 & 4.9 && 0.6 & 0.4 \\
 5 &  New & 10.1 & 10.4 && 5.2 & 5.9 && 0.9 & 1.1 \\ \vspace*{1.5mm}
   & KMSZ &  8.1 & 10.0 && 3.9 & 4.9 && 0.2 & 0.6 \\
 7 &  New & 10.9 & 11.2 && 5.5 & 5.1 && 1.2 & 0.9 \\ \vspace*{1.5mm}
   & KMSZ &  7.9 &  9.3 && 3.3 & 3.8 && 0.3 & 0.4 \\
10 &  New & 10.9 & 10.8 && 5.6 & 5.5 && 1.4 & 0.9 \\ \vspace*{1.5mm}
   & KMSZ &  7.4 &  8.3 && 3.1 & 3.7 && 0.0 & 0.7 \\
15 &  New & 11.2 & 10.8 && 7.5 & 5.8 && 1.9 & 1.3 \\ \vspace*{1.5mm}
   & KMSZ &  5.5 &  7.6 && 1.3 & 3.6 && 0.0 & 0.3 \\
Random &  New & 10.5 & 11.1 && 4.9 & 5.6 && 1.2 & 0.8 \\
       & KMSZ &  7.1 &  8.0 && 3.2 & 3.5 && 0.0 & 0.7 \\
 \hline
\end{tabular}$$
\begin{center}
{\bf Table 3.} Percentages of rejections for the new test and Kokoszka \emph{et al.} (2008)'s test under the null hypothesis.
\end{center}

\qquad

\qquad

\qquad

$$\begin{tabular}{cccccccccc}
&&\multicolumn{2}{c}{Level\ $=10\%$} && \multicolumn{2}{c}{Level\ $=5\%$}
&& \multicolumn{2}{c}{Level\ $=1\%$} \\
$p$ & Test & $n=40$ & $n=100$ && $n=40$ & $n=100$ && $n=40$ & $n=100$ \\
\hline &&&&&&&&& \\ 
 2 &  New & 62.7 & 97.5 && 45.8 & 94.1 && 21.6 & 80.9 \\ \vspace*{1.5mm}
   & KMSZ & 75.8 & 98.7 && 64.8 & 98.1 && 39.7 & 90.8 \\
 3 &  New & 53.3 & 94.8 && 42.1 & 89.7 && 20.1 & 75.9 \\ \vspace*{1.5mm}
   & KMSZ & 69.6 & 98.2 && 57.2 & 96.5 && 30.2 & 85.4 \\
 5 &  New & 50.4 & 92.2 && 38.5 & 87.6 && 16.5 & 71.3 \\ \vspace*{1.5mm}
   & KMSZ & 59.5 & 94.9 && 45.6 & 91.9 && 17.1 & 74.5 \\
 7 &  New & 47.4 & 91.1 && 35.2 & 85.9 && 15.7 & 67.6 \\ \vspace*{1.5mm}
   & KMSZ & 52.3 & 92.2 && 35.6 & 85.7 &&  9.8 & 66.2 \\
10 &  New & 43.1 & 86.1 && 31.5 & 79.5 && 13.4 & 60.0 \\ \vspace*{1.5mm}
   & KMSZ & 39.7 & 86.4 && 23.2 & 77.3 &&  3.9 & 52.4 \\
15 &  New & 38.0 & 80.7 && 27.3 & 72.0 && 11.3 & 48.8 \\ \vspace*{1.5mm}
   & KMSZ & 25.6 & 75.9 && 10.9 & 65.1 &&  0.4 & 36.6 \\
Random &  New & 44.5 & 85.2 && 33.7 & 78.1 && 14.1 & 59.4 \\
       & KMSZ & 45.7 & 85.3 && 27.4 & 75.0 &&  6.9 & 51.1 \\
 \hline
\end{tabular}$$
\begin{center}
{\bf Table 4.} Percentages of rejections for the new test and Kokoszka \emph{et al.} (2008)'s test under the functional linear effect.
\end{center}

\newpage

$$\begin{tabular}{cccccccccc}
&&\multicolumn{2}{c}{Level\ $=10\%$} && \multicolumn{2}{c}{Level\ $=5\%$}
&& \multicolumn{2}{c}{Level\ $=1\%$} \\
Model & Test & $n=40$ & $n=100$ && $n=40$ & $n=100$ && $n=40$ & $n=100$ \\
\hline &&&&&&&&& \\
Linear     &  New & 44.5 & 85.2 && 33.7 & 78.1 && 14.1 & 59.4 \\ \vspace*{1.5mm}
           & KMSZ & 45.7 & 85.3 && 27.4 & 75.0 &&  6.9 & 51.1 \\
Concurrent &  New & 28.5 & 55.3 && 19.9 & 43.5 &&  7.4 & 22.8 \\ \vspace*{1.5mm}
           & KMSZ & 34.8 & 80.3 && 20.9 & 68.8 &&  4.5 & 38.7 \\
Quadratic  &  New & 45.7 & 99.0 && 31.1 & 97.7 && 10.7 & 92.4 \\ \vspace*{1.5mm}
           & KMSZ & 28.6 & 29.5 && 17.3 & 20.8 &&  4.8 &  9.6 \\
 \hline
\end{tabular}$$
\begin{center}
{\bf Table 5.} Percentages of rejections for the new test and Kokoszka \emph{et al.} (2008)'s test under three alternative models.
\end{center}

\qquad

\qquad

\qquad

$$\begin{tabular}{cccccccccc}
&&\multicolumn{2}{c}{Level\ $=10\%$} && \multicolumn{2}{c}{Level\ $=5\%$}
&& \multicolumn{2}{c}{Level\ $=1\%$} \\
$p$ & Test & $n=40$ & $n=100$ && $n=40$ & $n=100$ && $n=40$ & $n=100$ \\
\hline &&&&&&&&& \\
 2 &  New &  8.8 & 10.8 && 4.5 &  5.4 && 0.5 & 1.3 \\ \vspace*{1.5mm}
   & KMSZ & 16.4 & 18.1 && 8.8 & 10.5 && 1.8 & 2.9 \\
 3 &  New &  9.6 &  9.9 && 5.2 &  5.4 && 0.7 & 1.1 \\ \vspace*{1.5mm}
   & KMSZ & 15.6 & 16.0 && 8.1 &  9.9 && 1.2 & 2.7 \\
 5 &  New &  9.7 & 10.4 && 4.8 &  4.8 && 0.7 & 1.0 \\ \vspace*{1.5mm}
   & KMSZ & 12.7 & 14.9 && 5.9 &  8.1 && 0.6 & 1.5 \\
 7 &  New &  9.8 &  9.3 && 4.6 &  4.3 && 0.7 & 1.2 \\ \vspace*{1.5mm}
   & KMSZ & 11.0 & 13.3 && 4.8 &  6.2 && 0.7 & 1.2 \\
10 &  New & 11.0 & 10.2 && 5.5 &  5.0 && 1.5 & 0.9 \\
   & KMSZ &  9.7 & 11.0 && 3.9 &  5.7 && 0.3 & 1.1 \\ \vspace*{1.5mm}
15 &  New & 11.8 & 10.6 &&  7.1 &  5.4 &&  1.9 &  1.6 \\ \vspace*{1.5mm}
   & KMSZ &  7.5 &  9.5 &&  2.0 &  4.9 &&  0.0 &  0.6 \\
Random &  New &  9.9 & 10.2 &&  5.1 &  5.3 &&  1.4 &  0.9 \\
       & KMSZ & 10.2 & 11.3 &&  4.2 &  5.8 &&  0.3 &  1.1 \\
   \hline
\end{tabular}$$
\begin{center}
{\bf Table 6.} Percentages of rejections for the new test and Kokoszka \emph{et al.} (2008)'s test under the null hypothesis with heteroscedastic error.
\end{center}

\qquad

\qquad

\qquad

$$\begin{tabular}{cccccccccc}
&&\multicolumn{2}{c}{Level\ $=10\%$} && \multicolumn{2}{c}{Level\ $=5\%$}
&& \multicolumn{2}{c}{Level\ $=1\%$} \\
$p$ & Test & $n=40$ & $n=100$ && $n=40$ & $n=100$ && $n=40$ & $n=100$ \\
\hline &&&&&&&&& \\
 2 &  New & 43.9 & 88.4 && 29.7 & 78.9 && 12.5 & 55.0 \\ \vspace*{1.5mm}
   & KMSZ & 63.6 & 91.6 && 51.7 & 86.7 && 29.5 & 69.4 \\
 3 &  New & 38.7 & 81.0 && 27.9 & 72.3 &&  9.9 & 49.5 \\ \vspace*{1.5mm}
   & KMSZ & 58.9 & 88.3 && 45.6 & 82.1 && 21.8 & 63.3 \\
 5 &  New & 34.8 & 77.0 && 25.1 & 68.5 &&  9.3 & 44.4 \\ \vspace*{1.5mm}
   & KMSZ & 50.0 & 81.7 && 35.6 & 73.5 && 12.0 & 52.7 \\
 7 &  New & 34.8 & 75.0 && 22.9 & 65.8 &&  7.8 & 39.2 \\ \vspace*{1.5mm}
   & KMSZ & 42.5 & 77.0 && 27.1 & 66.6 &&  7.0 & 42.1 \\
10 &  New & 31.6 & 69.3 && 21.0 & 58.6 &&  7.1 & 34.2 \\ \vspace*{1.5mm}
   & KMSZ & 33.0 & 69.5 && 18.6 & 56.5 &&  2.5 & 31.9 \\
15 &  New & 27.2 & 61.6 && 18.8 & 48.6 &&  5.5 & 27.5  \\ \vspace*{1.5mm}
   & KMSZ & 21.5 & 59.1 &&  7.5 & 44.0 &&  0.3 & 19.7  \\
Random &  New & 31.9 & 68.5 && 22.4 & 57.8 &&  7.5 & 32.9 \\
       & KMSZ & 37.0 & 67.5 && 21.9 & 54.5 &&  4.4 & 30.3 \\
 \hline
\end{tabular}$$
\begin{center}
{\bf Table 7.} Percentages of rejections for the new test and Kokoszka \emph{et al.} (2008)'s test under the functional linear effect with heteroscedastic error.
\end{center}

\qquad

\qquad

\qquad

$$\begin{tabular}{lccc}
&& \multicolumn{2}{c}{$p-$values} \\
Name of the model & Formula & $\psi_1$ & $\psi_u$ \\ \hline
No-effect & $Y_{ij}(t)=\mu(t)+\varepsilon_{ij}(t)$ & 0.0 & 0.0 \\
Functional ANOVA & $Y_{ij}(t)=\mu(t)+\alpha_j(t)+\varepsilon_{ij}(t)$ & 10.5 & 8.6 \\
Concurrent model &
$Y_{ij}(t)=\mu(t)+X_{ij}(t)\beta(t)+\varepsilon_{ij}(t)$ & 0.0 & 0.0 \\
Concurrent ANCOVA &
$Y_{ij}(t)=\mu(t)+\alpha_j(t)+X_{ij}(t)\beta(t)+\varepsilon_{ij}(t)$ & 9.6 & 6.6 \\
Functional linear model &
$Y_{ij}(t)=\mu(t)+\int X_{ij}(s)\beta(s,t)\,ds +\varepsilon_{ij}(t)$ & 0.0 & 0.0 \\
Functional ANCOVA &
$Y_{ij}(t)=\mu(t)+\alpha_j(t)+\int X_{ij}(s)\beta(s,t)\,ds +\varepsilon_{ij}(t)$ & 10.9 & 8.7 \\
 \hline
\end{tabular}$$
{\bf Table 8.} $p-$values (in percentages) for each of the models.

\newpage

\begin{figure}[h!]
\begin{center}
\includegraphics[width=8cm]{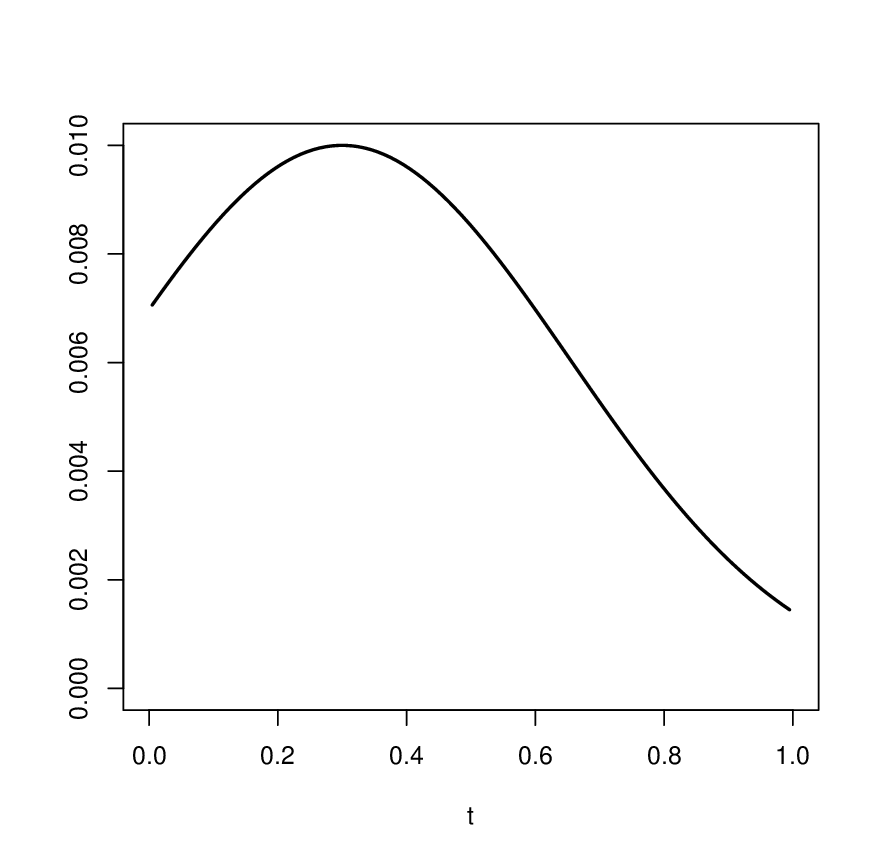}
\caption{Function $\mu(t)=0.01\cdot\exp(-4\cdot(t-0.3)^2)$ representing the common curve shape under the multiplicative effects model with one dimension predictor.}
\label{Figure0}
\end{center}
\end{figure}




%
%
%
%

\newpage

\setcounter{page}{1}
\renewcommand{\thesection}{S-\arabic{section}}
\setcounter{section}{0}

\begin{center}
\Huge{\emph{Supplementary Material}}
\end{center}

\section{Additional discussion on the practical aspects}

We proved in the paper that the choice of $\gamma_0^{(p)}$ is irrelevant for the asymptotic theory. In practice the choice of $\gamma_0^{(p)}$ could be related to prior information on a class of alternatives and thus help the practitioner to build a powerful test against that class. Concerning the set $B_p$, since $Q_n(\gamma) = Q_n(- \gamma)$ for any $\gamma \in \mathcal{S}^p,$ one could restrict the set $B_p$ to a half unit hypersphere like $\{\gamma\in \mathcal{S}^p: \gamma_1 \geq 0\} .$ One could restrict $B_p$ even more, and hence to speed up the optimization algorithms, when some prior information indicates a set of directions that would be able to detect alternatives.

Let us next discuss the influence of the choice of bandwidth. In our theory the choice of bandwidth does not appear in the asymptotic approximation of the size of the test. With finite samples, however, the law of $nh^{1/2}Q_n(\gamma)/\widehat v_n (\gamma)$ may change significantly with $h$ even for a fixed $\gamma,$ and hence a size correction is often necessary. We propose to make this correction using the simple wild bootstrap procedure described in the paper.  Alternatively, one could look for more elaborate methods, as for instance those in Horowitz and Spokoiny (2001) or Gao and Gijbels  (2008).
Such theoretical investigations could likely be reproduced in our framework under suitable, though restrictive, assumptions.
We argue that the  aspects concerning the choice of bandwidth in a functional data framework,  are quite challenging and hence deserve a separate investigation to be undertaken in future work.

\section{Additional empirical evidence}

Some additional experiments are provided to explore the possible influence of the algorithm on the statistical properties of the proposed test, the effect of the bandwidth and the penalization $\alpha_n$, and to study the possible approximation by the asymptotic distribution of the test statistic. As in the main part of the manuscript, all the results  will be provided on the basis of one thousand original samples.

\subsection{The sequential algorithm}

In Section 3.5 of the paper, a simplified algorithm for optimization in the hypersphere $\mathcal{S}^p$ is proposed which is based on $(p-1)$ one-dimensional optimizations. We shall show here that the statistical properties of the test with this simplified algorithm are similar to those obtained with a grid in the hypersphere. The same functional linear model studied in Section 4.2 of the paper will be considered. The procedure will be applied with the same values of the parameters used there which yielded the results in Tables 2 and 3. The dimension $p$ is now taken to be $3$ and $5$. Table S.1 contains the percentages of rejections for the new test with the sequential algorithm and with a full-dimensional optimization algorithm based on a grid in the hypersphere $\mathcal{S}^p$. For the sequential algorithm a grid of 50 points was taken in each one-dimensional optimization. For the full-dimensional  optimization, a grid of 1200 points was taken in the hypersphere  when $p=3$, while a grid of 3125 points was taken when $p=5$. It can be observed that similar results are obtained with the two algorithms, while the one-dimensional algorithm is much less time consuming and it is still feasible for a large dimension $p$.

\medskip
\begin{center}
\emph{Insert Table S.1 here}
\end{center}
\medskip


\subsection{The bandwidth}

To show the effect of the bandwidth on the test, the functional linear model studied in Section 4.2 is again considered, and the test is applied with the same values of the parameters used there. We shall fix the dimension $p$ here, to be 3, while the bandwidth will be $h=c_h n^{-2/9}$ for different values of $c_h$.

Table S.2 contains the percentages of rejections under the null hypothesis and under the alternative, coming from the functional linear effect. The level is respected for all values of the bandwidth, while the power is not much affected by the bandwidth in the wide range of values from $c_h=0.5$ to $c_h=1.5$. A possible trend to a higher power for larger bandwidths can be derived within this range, which is explained by the smoothness of the functional linear alternative. On the other hand, more curved alternatives generally require a smaller bandwidth as  was observed in similar smoothed testing methods. Either way, the test proposed here does not show a large effect coming from the bandwidth, and it should also be noted that, due to the nearest neighbor methodology, the choice of bandwidth does not depend on the covariate scale, so general rules like $h=c_h n^{-2/9}$, with $c_h$ around o1, are applicable to the common models considered in the literature on lack-of-fit tests.

\medskip
\begin{center}
\emph{Insert Table S.2 here}
\end{center}
\medskip


\subsection{The penalization, $\alpha_n$}

The penalization, denoted by $\alpha_n$, is one parameter that has to be chosen for the proposed test. We can say that there is no optimal choice for this parameter, but the decision will simply be based on the certainty about the best direction to detect the alternative. If the practitioner has a clear intuition that one direction will reveal a deviation from the null hypothesis, then this privileged direction should be protected by means of a large penalization $\alpha_n$. On the other hand, if there is no clear reference for a privileged direction, then a low penalization would be the most natural option. Regarding the values for the penalization, since the test statistic is asymptotically standard normal, values from 2 to 5 provide a balance between the privileged direction and the direction maximizing the deviation.

In our simulation results, the power (percentages of rejections), obtained for a penalization $\alpha_n$,  takes values from the power obtained with the privileged direction ($\alpha_n=\infty$) to the power obtained with the maximizing direction ($\alpha_n=0$). The directions we considered are the first eigenvector in the empirical FPC of the covariate, the second eigenvector and an un-informative direction with the same coefficients in all FPC components. The first eigenvector is a direction with very good power, the second has very poor power and the uninformative direction has  moderate power, similar to the maximizing direction.

Table S.3 shows the percentages of rejections with the same model studied in Section 4.2 of the paper, under the null hypothesis and under the alternative given by the functional linear model, with the second eigenvector of the empirical FPC of the covariate as privileged direction, and different values of the penalization. All other parameters and configurations of the test are the same as used in Section 4.2 of the paper. We have chosen the worst direction as the privileged direction, in order to show that some values of the penalization protect against a bad choice of the privileged direction. At the same time, this kind of privileged direction allows one to distinguish more clearly the balancing effect of the penalization between the privileged direction and the direction maximizing the statistic.

We observe that, under the null hypothesis, the nominal levels are respected for any value of the penalization. Under the alternative, the value 0 of the penalization provides the power coming from the direction maximizing the statistic, while the value $\infty$ corresponds to the test based only on the second eigenvector as privileged direction, which is a test with no power at all. It should be noted that the power is preserved with the values in the range from 0 up to  4 or 5, which we recommend. Smaller values, such as 1, 2 or 3, lead to a test based on the maximizing direction, while values from 4 or 5 lead to a test mainly based on the privileged direction.

\medskip
\begin{center}
\emph{Insert Table S.3 here}
\end{center}
\medskip

\subsection{Approximation by the asymptotic distribution}

Since the limit distribution of the test has been shown to be standard normal, one could wonder about the possibility of approximating the critical values of the test by the quantiles of the standard normal distribution.

The same model studied in Section 4.2 of the paper will be used under the null hypothesis, in the following experiment. In this way we shall check the accuracy of the standard normal approximation of the test. All the parameters and configurations of the test will be as in Section 4.2. The dimension $p$ will be set to 3, while different values for the penalization $\alpha_n$ will be considered.

The penalization will play a crucial role in the approximation of the test. A large value of the penalization leads to a test based on the projections of the covariate on the privileged direction, while a small value of the penalization leads to a test based on the maximum over the set of directions. Large values of the penalization then lead to smaller values of the test statistic, while small values lead to larger test statistics.

Table S.4 below shows the percentages of rejections for the test, when the standard normal distribution is used for the approximation of the critical values. Different nominal levels, sample sizes and values of the penalization, $\alpha_n$, are considered. Small values of the penalization (smaller than 2) will generally lead to percentages of rejections higher than the nominal level. Large values of the penalization will lead to percentages of rejections smaller than the nominal level. This fact is due to the negative correlation of the residuals which makes the test statistic based on a single direction negatively biased for small sample sizes. Since the approximation of the standard normal distribution would only be valid for a fine tuning of the penalization, we generally propose to use the bootstrap approximation that was shown to work for all models considered.

\medskip
\begin{center}
\emph{Insert Table S.4 here}
\end{center}
\medskip


\newpage

{\small
$$\begin{tabular}{ccccccccccc}
\vspace*{-1mm}
&&&\multicolumn{2}{c}{Level\ $=10\%$} && \multicolumn{2}{c}{Level\ $=5\%$}
&& \multicolumn{2}{c}{Level\ $=1\%$} \\
Hypothesis & $p$ & Algorithm & $n=40$ & $n=100$ && $n=40$ & $n=100$ && $n=40$ & $n=100$ \\
\hline &&&&&&&&&& \\
Null
  & 3 & Sequential &  9.6 & 10.7 && 5.2 & 5.7 && 1.0 & 1.3 \\ \vspace*{1.5mm}
  &   &   Full-dim. &  9.8 & 11.0 && 4.9 & 6.4 && 1.1 & 1.4 \\
  & 5 & Sequential & 10.1 & 10.4 && 5.2 & 5.9 && 0.9 & 1.1 \\ \vspace*{1.5mm}
  &   &   Full-dim. & 10.2 & 11.5 && 5.4 & 6.3 && 1.2 & 1.6 \\
Alternative
  & 3 & Sequential & 53.3 & 94.8 && 42.1 & 89.7 && 20.1 & 75.9 \\ \vspace*{1.5mm}
  &   &   Full-dim. & 50.7 & 93.0 && 40.5 & 88.3 && 20.1 & 74.7 \\
  & 5 & Sequential & 50.4 & 92.2 && 38.5 & 87.6 && 16.5 & 71.3 \\
  &   &   Full-dim. & 54.2 & 92.3 && 42.0 & 88.8 && 20.4 & 74.3 \\
 \hline
\end{tabular}$$
\begin{center}
{\bf Table S.1.} Percentages of rejections for the new test with the sequential algorithm and with a full-dimensional grid-based algorithm.
\end{center}
}

\qquad

\qquad

\qquad

{\small
$$\begin{tabular}{cccccccccc}
\vspace*{-1mm}
&&\multicolumn{2}{c}{Level\ $=10\%$} && \multicolumn{2}{c}{Level\ $=5\%$}
&& \multicolumn{2}{c}{Level\ $=1\%$} \\
Hypothesis & $c_h$ & $n=40$ & $n=100$ && $n=40$ & $n=100$ && $n=40$ & $n=100$ \\
\hline &&&&&&&&& \\
Null &  0.5  &  11.8 & 11.1 &&  6.0 &  6.2 && 1.3 & 1.1 \\
     &  0.75 &  11.2 & 10.7 &&  4.9 &  5.4 && 1.1 & 1.0 \\
     &  1.0  &   9.6 & 10.7 &&  5.2 &  5.7 && 1.0 & 1.3 \\
     &  1.25 &   9.2 & 10.9 &&  5.3 &  6.0 && 0.7 & 1.1 \\ \vspace*{1.5mm}
     &  1.5  &   9.3 &  9.9 &&  5.1 &  5.5 && 0.8 & 1.2 \\
Alternative
     &  0.5  &  45.4 & 89.4 && 34.7 & 84.3 && 16.1 & 68.7 \\
     &  0.75 &  48.1 & 92.2 && 37.4 & 86.6 && 18.7 & 72.6 \\
     &  1.0  &  53.3 & 94.8 && 42.1 & 89.7 && 20.1 & 75.9 \\
     &  1.25 &  54.0 & 95.8 && 42.0 & 91.5 && 21.0 & 78.2 \\
     &  1.5  &  66.9 & 97.4 && 43.1 & 93.5 && 20.4 & 79.6 \\
 \hline
\end{tabular}$$
\begin{center}
{\bf Table S.2.} Percentages of rejections for the new test under the null hypothesis and the alternative, for different values of the bandwidth $h=c_h n^{-2/9}$.
\end{center} }

\qquad

\qquad

\qquad

{\footnotesize
$$\begin{tabular}{ccccccccccc}
\vspace*{-1mm}
&&\multicolumn{2}{c}{Level\ $=10\%$} && \multicolumn{2}{c}{Level\ $=5\%$}
&& \multicolumn{2}{c}{Level\ $=1\%$} \\
Model & $\alpha_n$ & $n=40$ & $n=100$ && $n=40$ & $n=100$ && $n=40$ & $n=100$ \\
\hline &&&&&&&& \\
Null
 & 0.0  & 9.9 & 10.3 && 4.7 & 5.0 && 1.0  & 1.3  \\
 & 0.5  & 9.6 & 10.3 && 4.9 & 5.2 && 1.1  & 1.3  \\
 & 1.0  & 9.5 & 10.2 && 4.9 & 5.5 && 1.1  & 1.4  \\
 & 1.5  & 9.2 &  9.7 && 4.9 & 5.5 && 1.1  & 1.5  \\
 & 2.0  & 9.7 & 10.0 && 4.9 & 5.7 && 1.1  & 1.5  \\
 & 2.5  & 9.7 & 10.7 && 4.7 & 5.6 && 1.1  & 1.3   \\
 & 3.0  & 8.5 & 10.6 && 4.2 & 5.3 && 1.0  & 1.2  \\
 & 3.5  & 8.9 & 11.5 && 4.5 & 5.6 && 1.1  & 1.3  \\
 & 4.0  & 9.3 & 11.1 && 4.9 & 5.9 && 1.0  & 1.2  \\
 & 4.5  & 9.3 & 10.7 && 4.8 & 5.6 && 1.0  & 1.2  \\
 & 5.0  & 9.4 & 10.8 && 4.8 & 6.1 && 1.1  & 0.9  \\
 & 7.0  & 9.2 & 10.9 && 4.7 & 6.0 && 0.8  & 1.1  \\
 &10.0  & 9.2 & 10.7 && 4.7 & 5.8 && 0.9  & 0.9  \\ \vspace*{1mm}
 & $\infty$ & 9.2 & 10.7 &&  4.7 &  5.8 &&  0.9 &  0.9 \\
Alternative
 & 0.0  & 65.1 & 98.6 && 51.9 & 96.2 && 23.9  & 85.4  \\
 & 0.5  & 65.5 & 98.6 && 52.4 & 96.2 && 24.1  & 85.5  \\
 & 1.0  & 66.0 & 98.6 && 52.9 & 96.2 && 24.4  & 85.6  \\
 & 1.5  & 67.0 & 98.5 && 53.3 & 96.2 && 24.7  & 85.8  \\
 & 2.0  & 68.2 & 98.6 && 54.3 & 96.4 && 25.2  & 86.4  \\
 & 2.5  & 66.1 & 98.6 && 54.6 & 96.6 && 26.0  & 86.3  \\
 & 3.0  & 56.8 & 96.1 && 51.5 & 96.5 && 26.3  & 86.6  \\
 & 3.5  & 48.2 & 96.4 && 43.9 & 95.3 && 26.8  & 87.1  \\
 & 4.0  & 39.7 & 93.3 && 36.4 & 92.2 && 27.2  & 87.0  \\
 & 4.5  & 30.7 & 89.5 && 26.9 & 88.0 && 22.7  & 85.4  \\
 & 5.0  & 23.6 & 83.3 && 19.3 & 81.4 && 15.7  & 79.7  \\
 & 7.0  & 10.0 & 51.0 &&  5.6 & 47.6 &&  2.1  & 44.6  \\
 &10.0  &  8.6 & 21.5 &&  4.0 & 16.4 &&  0.5  & 12.6 \\
 & $\infty$ &  8.6 &  10.9 &&  4.0 &  5.2 &&  0.5 &  1.3 \\
\hline
\end{tabular}$$
\begin{center}
{\bf Table S.3.} Percentages of rejections for the new test under the null hypothesis and under the alternative (functional linear deviation), for different values of the penalization, $\alpha_n$.
\end{center} }

\qquad

\qquad

\qquad

{\small
$$\begin{tabular}{ccccccccc}
\vspace*{-1mm}
&\multicolumn{2}{c}{Level\ $=10\%$} && \multicolumn{2}{c}{Level\ $=5\%$}
&& \multicolumn{2}{c}{Level\ $=1\%$} \\
$\alpha_n$ & $n=40$ & $n=100$ && $n=40$ & $n=100$ && $n=40$ & $n=100$ \\
\hline &&&&&&&& \\
  0.0  & 14.6 & 21.7 &&  8.8 & 15.3 &&  4.1 &  6.5 \\
  0.5  & 14.4 & 21.2 &&  8.8 & 15.1 &&  4.1 &  6.4 \\
  1.0  & 13.8 & 19.8 &&  8.5 & 14.2 &&  3.8 &  6.4 \\
  1.5  & 12.4 & 18.0 &&  8.0 & 13.6 &&  3.7 &  6.4 \\
  2.0  & 10.7 & 15.2 &&  7.6 & 12.1 &&  3.7 &  5.9 \\
  2.5  &  8.0 & 12.2 &&  6.6 & 10.1 &&  3.4 &  5.5 \\
  3.0  &  5.5 &  7.9 &&  5.3 &  7.1 &&  2.9 &  4.6 \\
  3.5  &  2.9 &  5.0 &&  2.8 &  4.6 &&  2.1 &  3.7 \\
  4.0  &  2.0 &  3.3 &&  1.9 &  2.9 &&  1.4 &  2.3 \\
  4.5  &  1.2 &  2.2 &&  1.1 &  1.7 &&  0.6 &  1.0 \\
  5.0  &  1.0 &  1.9 &&  0.9 &  1.4 &&  0.4 &  0.7 \\
  7.0  &  0.8 &  1.7 &&  0.7 &  1.2 &&  0.2 &  0.5 \\
$\infty$ &  0.8 &  1.7 &&  0.7 &  1.2 &&  0.2 &  0.5 \\
\hline
\end{tabular}$$
\begin{center}
{\bf Table S.4.} Percentages of rejections for the new test under the null hypothesis, with asymptotic distribution approximation, for different values of the penalization, $\alpha_n$.
\end{center} }
\vspace*{3mm}

\newpage

\section{Technical lemmas and proofs}

\begin{lem}\label{db_integr}
Let $K$ be a density satisfying Assumption \ref{K}-(a) and assume that $h \rightarrow 0$ and $nh\rightarrow \infty$. Let
$$
S_{ni} = \frac{1}{(n-1)h}\sum_{1\leq j \leq n, \; i\neq j} K\left( \frac{i-j}{nh} \right) \quad \text{and} \quad S_n=\frac{1}{n}\sum_{1\leq i \leq n} S_{ni}.
$$
Constants $c_1,c_2$ then exist such that for sufficiently large $n$
$$
0 < c_1\leq \min_{1\leq i \leq n } S_{ni}\leq  \max_{1\leq i \leq n } S_{ni}  \leq c_2 < \infty.
$$
Moreover,  $S_n \rightarrow 1.
$
\end{lem}

\begin{proof}[Proof of Lemma \ref{db_integr}]
It is clear that $S_n - \widetilde S_n \rightarrow 0$ where
$$
\widetilde S_n = \frac{1}{n^2h}\sum_{1\leq i, j \leq n} K\left( \frac{i-j}{nh} \right) .
$$
If $[a]$ denotes the integer part of any real number $a$,  we can write
\begin{eqnarray*}
\widetilde S_n &=& \int_{1/n}^{(n+1)/n} \int_{1/n}^{(n+1)/n} h^{-1} K\left( \frac{[ns] - [nt]}{nh} \right) dsdt\\
&=& \int_{1/n}^{(n+1)/n} \int_{1/nh - t/h }^{1/h + 1/nh - t/h}  K\left( \frac{[nt + nzh] - [nt]}{nh} \right) dzdt\qquad \qquad \quad\; [z=(s-t)/h]\\
&=& \int_{1/n}^{(n+1)/n} \int_{1/nh - t/h }^{1/h + 1/nh - t/h} K\left( z\right) dzdt + o(1) \\
&=& \int_{- 1/h}^{1/h} \int_{1/n - zh }^{1+1/n - zh} dt K\left( z\right) dz + o(1) \qquad \qquad \qquad \qquad \qquad \qquad \qquad \qquad \![\text{Fubini}]\\
&\rightarrow & 1,
\end{eqnarray*}
where the order $o(1)$ of the remainder on the right-hand side of  the third equality could be obtained as a consequence of the fact that  $K$ is symmetric and monotonic.
Hence  $S_n\rightarrow 1.$ Similarly, we can write
\begin{eqnarray*}
\widetilde S_{ni} &=& \int_{1/n}^{(n+1)/n} h^{-1} K\left( \frac{i - [nt]}{nh} \right) dt\\
&=&  \int_{(1-i)/nh}^{1/h + (1-i)/nh}  K\left( \frac{i - [i + nzh] }{nh} \right) dz\qquad \qquad \quad\; [z=(t-i/n)/h]\\
&=&  \int_{(1-i)/nh }^{1/h + (1-i)/nh } K\left( z\right) dz + o(1).
\end{eqnarray*}
Deduce that
$$
\int_{0}^1 K(z) dz + \underline{r}_{ni} \leq \widetilde S_{ni} \leq \int_{\mathbb{R}} K(z) dz + \overline{r}_{ni}
$$
where $\max_{1\leq i \leq n } \{|\underline r_{ni}|+|\overline r_{ni}|\} = o(1).$ The result follows.
\end{proof}

\bigskip

\bigskip

\begin{proof}[Proof of Lemma \ref{upp_bds}]
The bound for $A_n$ is obvious. For $C_n^2$ note that
$$
\mathbb{E}[ h_{i,j}^2 (Z_i,Z_j)] = \frac{ M^{-4}}{n^2 (n\!-\!1)^2 h} \mathbb{E}
\! \left[ \langle  Z_{i} \; , Z_{j}   \rangle^2 \right]\! h^{-1} K_{h}^2\left( F_{\gamma,n}(\langle x_{i},\gamma\rangle)- F_{\gamma,n}(\langle x_{j},\gamma\rangle) \right) .
$$
By the Cauchy-Schwarz inequality and the triangle inequality and recalling that $\widetilde Z_i$
is distributed according to the conditional law of $U_i$ given $X_i=x_i$,
$$
\mathbb{E}
\left[ \langle  Z_{i} \; ,  Z_{j}   \rangle^2 \right] \leq 16 \mathbb{E}
\left[ \| \widetilde Z_{i} \|^2  \right]
\mathbb{E}
\left[  \| \widetilde Z_{j} \|^2  \right]\leq 16 C^2,
$$
for any constant  $C$  that bounds from above $\mathbb{E}(\|U\|^2\mid X)$, see Assumption \ref{D}-(c).
Finally, recall that $K$ is bounded and note that
$$
\frac{1}{n(n-1)h}\sum_{1\leq i\neq j \leq n} K_h\left( F_{\gamma,n}(\langle x_{i},\gamma\rangle)- F_{\gamma,n}(\langle x_{j},\gamma\rangle) \right) = \frac{1}{n(n-1)h}\sum_{1\leq i\neq j \leq n} K\left( \frac{i-j}{nh} \right)
$$
and apply the second part of Lemma \ref{db_integr} to derive the bound for $C_n^2.$ To derive the bound for $B_n^2$ recall that $h_{i,j}(Z_j,z)  $ vanishes for $\|z\| > 2M$, again using the Cauchy-Schwarz inequality and the triangle inequality and the first part of  Lemma \ref{db_integr}. For the bound of $D_n$, using the Cauchy-Schwarz inequality and the independence of $Z_i$ and $Z_j,$ we can write
\begin{eqnarray*}
\mathbb{E} \!\sum_{i,j} \! h_{ i, j}(Z_i,\!Z_j)f_i(Z_i) g_j(Z_j) \!\!  &\leq&  \!\!\! \sum_{i,j} \frac{\mathbb{E} |\langle  Z_{i}f_i(Z_i) ,  Z_{j}g_j(Z_j)   \rangle |}{n(n-1)hM^2}
K_{h}\!\left( F_{\gamma,n}(\langle x_{i},\gamma\rangle)\!- \!F_{\gamma,n}(\langle x_{j},\gamma\rangle) \right) \\
&\leq & \!\!\! \sum_{i,j} \!\frac{\!4C \mathbb{E} ^{1/2}\!f_i^2(Z_i) \mathbb{E} ^{1/2}  \!g_j^2(Z_j)}{n(n-1)hM^2}
K_{h}\! \left( F_{\gamma,n}(\langle x_{i},\gamma\rangle)\!- \!F_{\gamma,n}(\langle x_{j},\gamma\rangle) \right) \\
&\leq & \frac{\!4C}{M^2} \|| \mathcal{K} \||_2,
\end{eqnarray*}
where $C$ is such that $\mathbb{E}(\|U\|^2\mid X)\leq C$ and
$\mathcal{K} $ is the matrix with elements
\begin{equation}\label{matrix_k}
\mathcal{K}_{ij}=  K\left( (i-j)/nh \right)/[n(n-1)h], \quad i\neq j, \quad \text{and} \quad
\mathcal{K}_{ii}= 0,
\end{equation}
and $\|| \mathcal{K} \||_2$  is the spectral norm of $\mathcal{K}.$
By definition, $\|| \mathcal{K} \||_2 =\sup_{u\in\mathbb{R}^n , u\neq 0}{\Vert
\mathcal{K} u\Vert }/{\Vert u\Vert }$ and $|u^\prime \mathcal{K} w|\leq \|| \mathcal{K} \||_2 \|u\| \|w\|$ for any $u,w\in\mathbb{R}^n.$
By Lemma \ref{db_integr}, for any $u\in
\mathbb{R}^{n}$,
\begin{eqnarray}
\left\Vert \mathcal{K} u\right\Vert ^{2} &=& \sum_{i=1}^{n}\left(
\sum_{j=1,j\neq i}^{n}\frac{K_{h}\left( (i-j)/nh \right) }{h\,n(n-1)}u_{j}\right) ^{2}\notag \\
&\leq &\sum_{i=1}^{n}\left( \sum_{j=1,j\neq i}^{n}\frac{K_{h}\left(
(i-j)/nh\right) }{h\,n(n-1)}\right)
\sum_{j=1,j\neq i}^{n}\frac{K_{h}\left( (i-j)/nh\right) }{h\,n(n-1)}\,u_{j}^{2} \notag \\
&\leq &\left\Vert u\right\Vert ^{2}\left[ \max_{1\leq i\leq n}\left(
\sum_{j=1,j\neq i}^{n}\frac{K_{h}\left( (i-j)/nh\right) }{h\,n(n-1)}\right) \right] ^{2}\;\notag\\
&\leq &c n^{-2}\|u\|^2,\label{spectral}
\end{eqnarray}
for some constant $c>0$. The bound for $D_n$ follows immediately.
\end{proof}

\bigskip

\begin{proof}[Proof of Theorem \ref{as_law}]
By Lemma \ref{beta}, if suffices to prove the asymptotic normality of the test statistic $T_n$ defined with $\widehat \gamma_n =\gamma_0^{(p)}.$ The proof of this asymptotic normality is based on
the Central Limit Theorem 5.1 of de Jong (1987). We will apply the result of de Jong  conditionally given the values of the
covariate sample.
Let $x_1,\cdots,x_n$ be an \emph{arbitrary} collection of non-random points in $L^2[0,1].$
Consider $\widetilde Z_1,\cdots, \widetilde Z_n$ independent random variables with values in $L^2[0,1]$ such that for each $i,$ the law of $\widetilde Z_i$
is the conditional law of $U_i$ given $X_i=x_i$.
Let $F_{\gamma_0^{(p)},n}(\cdot)$ be the empirical distribution function of the sample $\langle x_1, \gamma_0^{(p)}\rangle,\cdots, \langle x_n, \gamma_0^{(p)}\rangle$,
$$K_{h,ij}(\gamma_0^{(p)}) =  K_{h}\left( F_{\gamma_0^{(p)},n}(\langle x_{i},\gamma_0^{(p)}\rangle)- F_{\gamma_0^{(p)},n}(\langle x_{j},\gamma_0^{(p)}\rangle) \right)$$ and
$$
W_{ij} = \frac{1}{n(n-1)}\langle \widetilde Z_{i} , \widetilde Z_{j} \rangle \frac{1}{h} K_{h,ij}(\gamma_0^{(p)})
,\quad 1\leq i\neq j\leq n, \qquad W_{ii} =0, \quad 1\leq i\leq n.
$$
Hence $Q_{n}(\gamma_0^{(p)} ) =\sum_{i,
j} W_{ij}$ and $\widehat{v}_{n}^{2}( \gamma_0^{(p)}) = 2n(n-1)h \sum_{i,
j} W_{ij}^2.$ A crucial remark that is used several times in the following is that the elements of the matrix $(K_{h,ij}(\gamma_0^{(p)}))$ are the same as those of the matrix $(K_{h}((i-j)/nh)$ up to permutations of lines and columns.
Following the notation of de Jong (1987), let $$\sigma^2_{ij} = \mathbb{E}(W_{ij}^2)= \mathbb{E}[\langle  U_{i} ,  U_{j} \rangle^2 \mid X_i=x_i,X_j = x_j] \frac{K_{h,ij}^2(\gamma_0^{(p)})}{n^2(n-1)^2h^{2}} $$ and $\sigma(n)^2 = 2 \sum_{i\neq j} \sigma^2_{ij}.$
Since $$ \mathbb{E}[ \langle U_{i} , U_{j} \rangle ^2 \mid X_1,\cdots, X_n] =  \mathbb{E}[ \langle U_{i} , U_{j} \rangle ^2 \mid X_i, X_j]
\leq \mathbb{E}[ \| U_{i} \|^2 \mid X_i]  \mathbb{E}[ \| U_{j} \|^2 \mid X_j],$$ and $\mathbb{E}[ \langle U_{i} , U_{j} \rangle ^2 \mid X_i, X_j]$ is bounded away from zero by Assumption \ref{D}-(c), deduce that positive constants $\underline{c}$ and $\overline{c}$ exist such that
\begin{equation}\label{sigmaij}
\frac{\underline{c}}{n^4 h^2}  K^2_{h,ij}(\gamma_0^{(p)}) \leq \sigma_{ij}^2 \leq \frac{\overline{c}}{n^4 h^2}   K^2_{h,ij}(\gamma_0^{(p)}).
\end{equation}
Applying Lemma \ref{db_integr} with $K$ replaced by $K^2,$ one can deduce that for each $i,$
\begin{eqnarray}\label{rty}
\frac{c_1}{n^3 h} \leq \frac{\underline{c}}{n^4 h^2} \min_{1\leq i \leq n} \sum_{ j\neq i } K^2_{h}((i-j)/nh)&\leq &  \sum_{1\leq  j \leq n, i\neq j } \sigma_{ij}^2 \\&\leq &\frac{\overline{c}}{n^4 h^2} \max_{1\leq i \leq n} \sum_{ j\neq i } K^2_{h}((i-j)/nh) \leq \frac{c_2}{n^3 h},\notag
\end{eqnarray}
for some constants $c_1$ and $c_2.$ Moreover,
constants $\underline{c}^\prime$ and $\overline{c}^\prime$ exist such that
$$
\underline{c}^\prime n^{-2}h^{-1} \leq \sigma(n)^2\leq \overline{c}^\prime n^{-2}h^{-1}.
$$
It follows that
$$
\sigma(n)^{-2} \max_{1\leq i\leq n} \sum_{j=1}^n \sigma_{ij}^2 = O(n^{-1}),
$$
and thus Condition 1 in Theorem 5.1 of de Jong (1987) holds true as soon as $\kappa (n) = o(n^{1/2}).$ In order to check Condition 2 in Theorem 5.1 of de Jong (1987), let us use the H\"{o}lder inequality with $p=\nu/2$ and $q=\nu/(\nu -2),$ with $\nu$ given by Assumption \ref{D}-(c)-(ii), and the Markov inequality to get, for some constant $C$,
$$
 \mathbb{E}[\sigma_{ij}^{-2}W_{ij}^2 \mathbb{I}_{\{\sigma^{-1}_{ij}|W_{ij}| > \kappa(n)  \} } ] \leq    \mathbb{E}^{2/\nu}[\sigma_{ij}^{-\nu}|W_{ij}|^\nu] \mathbb{P}^{(\nu-2)/\nu}
 [\sigma^{-1}_{ij}|W_{ij} |> \kappa(n)] \leq C \kappa(n)^{-(\nu-2)/\nu} .
$$
This shows that Condition 2 of Theorem 5.1 of de Jong holds true for any $\kappa (n)$ tending to infinity.  Finally, let $\mu_1,\cdots,\mu_n$ denote the eigenvalues of the matrix $(\sigma_{ij}).$ To check Condition 3 of de Jong, we use the upper bound of $\sigma_{ij}$ in (\ref{sigmaij}) to deduce that there exists a constant $C$ (independent of $n$ and $i$) such that
\begin{eqnarray*}
\sum_{j=1,j\neq i}^{n}\sigma_{ij} &\leq &\frac{C}{n^2 h} \sum_{j=1,j\neq i}^{n}K_{h,ij}(\gamma_0^{(p)}).
\end{eqnarray*}
Next, it should be noted that if $\Sigma$ denotes the $n\times n$ matrix with generic element $\sigma_{ij},$ following the lines of equation (\ref{spectral}) and using equation (\ref{sigmaij}),
for any $u\in\mathbb{R}^{n}$,
\begin{eqnarray}\label{eig_mu}
\left\Vert \Sigma u\right\Vert ^{2}
&\leq &\left\Vert u\right\Vert ^{2}\left[ \max_{1\leq i\leq n}\left(
\sum_{j=1,j\neq i}^{n}\sigma_{ij}\right) \right] ^{2}\notag\\
&\leq & c_1 \left\Vert u\right\Vert ^{2}\left[ \max_{1\leq i\leq n}\left(
\sum_{j=1,j\neq i}^{n}\frac{K_{h}\left( (i-j)/nh\right) }{h\,n(n-1)}\right) \right] ^{2}
\leq c_2 n^{-2}\|u\|^2,
\end{eqnarray}
for some constants $c_1,c_2>0$. It can be deduced that
$$
\sigma(n)^{-2} \max_{1\leq i\leq n}\mu_i^2 \leq \frac{h n^2}{\underline{c}^\prime } \frac{c_2}{n^2} \rightarrow 0,
$$
and thus Condition 3 of de Jong (1987) holds true.
To complete the proof of the asymptotic normality of the statistic $T_{n} = n h^{1/2} Q_{n}(\gamma_0^{(p)})/
\widehat{ v}_{n} (\gamma_0^{(p)})$ given the covariate values, it should be noted that
$$
\sigma^2(n)  = \mathbb{E}[Q_{n}^2(\gamma_0^{(p)} ) \mid X_1=x_1,\cdots,X_n = x_n ]= \frac{\mathbb{E}[\widehat{v}_{n}^{2}( \gamma_0^{(p)})\mid X_1=x_1,\cdots,X_n = x_n]}{n(n-1)h}.
$$
Moreover, by direct standard calculation, it can be shown that the variance of
$$
 \frac{1}{n(n-1)}\sum_{1\leq i\neq j \leq n} \langle \widetilde Z_{i} , \widetilde Z_{j} \rangle^2  \frac{1}{h} K^2_{h,ij}(\gamma_0^{(p)})
$$
is of rate $O(h^{-1}n^{-1})=o(1)$.
One can deduce that
\begin{equation}\label{var_esty}
\frac{\widehat{v}_{n}^{2}( \gamma_0^{(p)})/n(n-1)h}{\sigma^2(n)} - 1 = o_{\mathbb{P}} (1)
\end{equation}
given $ X_1=x_1,\cdots,X_n = x_n.$ The asymptotic normality of $T_n$ given $ X_1=x_1,\cdots,X_n = x_n$ is a consequence of Theorem 5.1 of de Jong and equation (\ref{var_esty}). The proof is complete.
\end{proof}

\bigskip

\bigskip
\begin{proof}[Proof of Theorem \ref{altern}]
The proof is based on inequality (\ref{eqaa}).
Since $\mathbb{E}(\langle U_{1},U_{2}\rangle^{2} \mid X_{1}, X_{2}) \geq \underline{\sigma}^2 + r_n^4\langle \delta(X_{1}),\delta(X_{2})\rangle^{2},$ clearly the variance estimate $\widehat v_n^2 (\widetilde \gamma)$ does not approach zero for all $\widetilde \gamma$. On the other hand, by Cauchy-Schwarz and the property of the spectral norm for matrices,
\begin{eqnarray}\label{spec_ineqp}
\widehat v_n^2 (\widetilde \gamma) &\leq &\frac{2n/(n-1)}{n^2h}\sum_{1\leq i,j\leq n}\|\delta(X_i)\|^2\|\delta(X_j)\|^2 K_h^2(F_{n,\widetilde \gamma}(\langle X_i ,\widetilde \gamma\rangle) - F_{n,\widetilde \gamma}(\langle X_j ,\widetilde \gamma\rangle))\notag\\
&\leq & \|| \mathcal{K}_2 \||_2 \sum_{1 \leq i \leq n} \|\delta(X_i)\|^4,
\end{eqnarray}
where $\mathcal{K}_2$ is the matrix with entries $n^{-2}h^{-1}K_h^2(F_{n,\widetilde \gamma}(\langle X_i ,\widetilde \gamma\rangle) - F_{n,\widetilde \gamma}(\langle X_j ,\widetilde \gamma\rangle)).$ From the arguments used in equation (\ref{eig_mu}),
$\|| \mathcal{K}_2 \||_2 = O_{\mathbb{P}}(n^{-1})$. This together with the finite fourth order moment condition for $\delta(\cdot)$ imply
that $\widehat v_n^2 (\widetilde \gamma)$ is bounded in probability.
Hence it suffices to examine at the behavior of $Q_n(\widetilde \gamma)$. By Lemma \ref{lem1}-(B) there exists $p_0$  and $\widetilde \gamma\in B_{p_0}\subset \mathcal{S}^{p_0}$ ($p_0$ and $\widetilde \gamma$ independent of $n$) such that $\mathbb{E}[\delta(X) \mid \langle X,\widetilde  \gamma\rangle]\neq 0$.  Hereafter, $\widetilde\gamma $ is supposed to have this property. Let $V_{ni} =F_{n,\widetilde \gamma}(\langle X_i ,\widetilde \gamma\rangle).$
We can write
\begin{eqnarray*}
Q_n(\widetilde \gamma) & = & \frac{1}{n(n-1)h}\sum_{i\neq j}\langle U_i^0 ,U_j^0\rangle K_h(V_{ni} - V_{nj}\rangle )\\
&&+ \frac{2r_n}{n(n-1)h}\sum_{i\neq j}\langle U_i^0 ,\delta(X_j)\rangle K_h(V_{ni} - V_{nj} )\\
&&+\frac{r_n^2}{n(n-1)h}\sum_{i\neq j}\langle \delta(X_i), \delta(X_j)\rangle K_h(V_{ni} - V_{nj} )\\
&=:& Q_{0n}(\widetilde \gamma) + 2r_n  Q_{1n}(\widetilde \gamma)+ r_n^2Q_{2n}(\widetilde \gamma).
\end{eqnarray*}
Since $\widetilde \gamma$ is fixed,  $Q_{0n}(\widetilde \gamma)=O_{\mathbb{P}}(n^{-1}h^{-1/2})$ (cf. proof of Theorem \ref{as_law}).
Next, let us follow Guerre and Lavergne (2005), and denote by $\mathbb{E}_n$ the conditional expectation given $X_1,\cdots,X_n$ and define
$$
\overline{\delta_n}(X_i) = \frac{1}{n(n-1)h} \sum_{j=1, \;j\neq i}^n \delta(X_j) K_h(V_{ni} - V_{nj}),\qquad \overline{\delta}= (\delta(X_1),\cdots,\delta(X_n))^\prime.
$$
Then the Marcinkiewicz-Zygmund inequality
and the Cauchy-Schwarz and Jensen   inequalities imply that
\begin{eqnarray*}
\mathbb{E}_n\left|\sum_{i=1}^n \langle U_i^0 ,\overline{\delta_n} (X_{i}) \rangle \right| &\leq & c \; \mathbb{E}_n \left| \sum_{i=1}^n \left| \langle U_i^0 ,\overline{\delta_n} (X_{i}) \rangle \right|^2 \right|^{1/2}\leq c\; \mathbb{E}_n\left| \sum_{i=1}^n \| U_i^0 \|^2 \| \overline{\delta_n} (X_{i}) \|^2 \right|^{1/2} \\
&\leq& \!\!\! c \!\left\{ \sum_{i=1}^n \mathbb{E}_n \left( \| U_i^0 \|^2\right) \| \overline{\delta_n} (X_{i})\|^2  \right\}^{1/2}\!\!\!\leq c\; C_2^{1/\nu}
\!\left\{ \sum_{i=1}^n  \| \overline{\delta_n} (X_{i})\|^2  \right\}^{1/2}\\
&=& c \;C_2^{1/\nu} \|\mathcal{K}_3\overline{\delta}\| \leq c \;C_2^{1/\nu} n^{1/2} \|| \mathcal{K}_3 |\|_2 \left\{ \frac{1}{n}\sum_{i=1}^n  \| \delta (X_{i})\|^2  \right\}^{1/2},
\end{eqnarray*}
for  $\mathcal{K}_3$ a matrix with the same elements as the matrix $\mathcal{K}$ defined  in equation (\ref{matrix_k}) up to permutations of lines and columns, and $C_2$ and $\nu$ the constants in Assumption \ref{D}, and c some constant  independent of $n$. Since $\|| \mathcal{K} |\|_2=\|| \mathcal{K}_3 |\|_2 = O_{\mathbb{P}}(n^{-1})$, one can deduce that $Q_{1n}(\widetilde \gamma)=O_{\mathbb{P}}(n^{-1/2})$ conditionally on $X_1,\cdots,X_n.$
Now, let us investigate $Q_{2n}(\widetilde \gamma)$.
With an inequality such as in equation (\ref{spec_ineqp}) and the moment conditions on $\delta(\cdot)$ it is easy to bound $Q_{2n}(\widetilde \gamma)$ in probability. It remains to show that it is bounded  away from zero.  Let
$V_i =F_{\widetilde \gamma}(\langle X_i ,\widetilde \gamma\rangle) $, so that $V_1,\cdots, V_n$ are independent uniform  variables on $[0,1],$ and
$$
Q^{\prime}_{2n}(\widetilde \gamma) = \frac{1}{n^2h}\sum_{1\leq i , j\leq n}\langle\delta(X_{i}),\delta(X_{j})\rangle K_h( V_{ni} - V_{nj} ),
$$
$$
Q^{\prime\prime}_{2n}(\widetilde \gamma) = \frac{1}{n^2 h}\sum_{1\leq i , j\leq n}\langle\delta(X_{i}),\delta(X_{j})\rangle K_h( V_i - V_j ),
$$
$$
Q^{\star}_{2n}(\widetilde \gamma) = \frac{1}{n(n-1) h}\sum_{1\leq i \neq j\leq n}\langle\delta(X_{i}),\delta(X_{j})\rangle K_h( V_i - V_j ).
$$
We have
$$
Q^{\prime}_{2n}(\widetilde \gamma) - \frac{n\!-\!1}{n} Q_{2n}(\widetilde \gamma) = Q^{\prime\prime}_{2n}(\widetilde \gamma) - \frac{n\!-\!1}{n} Q_{2n}^\star(\widetilde \gamma) \!=\!  \frac{K(0)}{n^2h}  \sum_{i=1}^n \| \delta(X_{i}) \|^2 \!=\! O_{\mathbb{P}}(n^{-1}h^{-1}) \!= \!o_{\mathbb{P}}(1).
$$
Next we show that $Q^{\prime}_{2n}(\widetilde \gamma) -
Q^{\prime\prime}_{2n}(\widetilde \gamma) =  o_{\mathbb{P}}(1).$
If $K$ satisfies a Lipschitz condition and $nh^4\rightarrow \infty$, by the Cauchy-Schwarz inequality, for some constant $C>0$
\begin{eqnarray*}
\left|Q^{\prime}_{2n}(\widetilde \gamma) - Q^{\prime\prime}_{2n}(\widetilde \gamma)\right| & \leq & \frac{C\Delta_n}{h^2} \left[\frac{1}{n}\sum_{1\leq i\leq n}\|\delta(X_{i})\| \right]^2 = o_{\mathbb{P}}(1),
\end{eqnarray*}
where $\Delta_n = \sup_{1\leq i \leq n} |V_{ni} - V_{i}|$.
Note that
$
\Delta_n \leq \sup_{t\in\mathbb{R}} |F_{n,\widetilde \gamma} (t)- F_{\widetilde \gamma} (t)|=O_{\mathbb{P}} (n^{ -1/2}).
$
One can conclude that  $Q_{2n}(\widetilde \gamma) - Q^\ast_{2n}(\widetilde \gamma)=o_{\mathbb{P}}(1),$ so that is suffices to investigate $Q^\ast_{2n}(\widetilde \gamma)$.
It is easy to show that the variance of $Q^\ast_{2n}(\widetilde \gamma)$ tends to zero, so that it remains to show that the expectation of
$Q^\ast_{2n}(\widetilde \gamma) $ does not approach zero.
Let $\overline \delta (t,v) = \mathbb{E}[\delta(X_j)(t)\mid V_j=v]$ and note that $0<\iint_{[0,1]\times [0,1]} |\overline \delta(t,v) |^2dvdt<\infty.$
By the Inverse Fourier Transform formula and repeated applications of Fubini's theorem we get
\begin{eqnarray*}
\mathbb{E}[Q^\ast_{2n}(\widetilde \gamma)] &=& \esp \left[\langle\delta(X_{i}),\delta(X_{j})\rangle h^{-1}K_h( V_i - V_j )\right]\\
&=&  \esp (\langle\delta(X_{i}),\esp[\delta(X_{j})h^{-1}K_h( V_i - V_j )\mid X_{i}]\rangle)\\
&=&  \int_{[0,1]}  \esp \left(\delta(X)(t)\int_{\mathbb{R}} \exp\{2\pi i s V \} \mathcal{F}[\overline \delta(t,\cdot)  ](-s)\mathcal{F}[K](hs) ds\right)dt\\
&=&\int_{[0,1]}  \left[\int_{\mathbb{R}} \Vert \mathcal{F}[\overline \delta(t,\cdot) ](s)\Vert^2\mathcal{F}[K] (hs) ds\right] dt.
\end{eqnarray*}
When $h\rightarrow 0$, by the Lebesgue dominated convergence theorem and the Plancherel theorem applied to the integral inside the square brackets, $$\mathbb{E}[Q^\ast_{2n}(\widetilde \gamma)]\rightarrow \int_{[0,1]}\int_{[0,1]} |\overline \delta(t,v) |^2dvdt.$$
One can deduce that  $\mathbb{P}[c^{-1} \leq Q_{2n}(\widetilde \gamma) \leq c] \rightarrow 1$ for some constant $c>0.$
Taking all the results together, we can conclude that for any  $C>0,$   $\mathbb{P}[ T_n \geq C ] \rightarrow 1.$
\end{proof}

\bigskip

\begin{proof}[Proof of Corollary \ref{FPC}]
a) Let $\widehat x_{ik} = \int_{[0,1]} X_i(t)\widehat\psi_k (t) dt,$ so that $\langle X_i ,\gamma\rangle_n = \sum_{k=1}^p \widehat x_{ik} \gamma_k.$ Note that $\widehat x_{ik},$ $1\leq k \leq p,$ $1\leq i \leq n$ are measurable functions of $X_1,\cdots,X_n.$
Now, let $\widehat F_{\gamma,n}$ denote the empirical distribution function of the sample
$\langle X_{1},\gamma\rangle_n,\cdots, \langle X_{n},\gamma\rangle_n.$
Note that the elements of the matrices
$(K_h(\widehat F_{\gamma,n}(\langle X_{i},\gamma\rangle_n)- \widehat F_{\gamma,n}(\langle X_{j},\gamma\rangle_n)))$ and $(K\left( (i-j)/nh \right))$ are the same up to permutations of lines and columns. Given that in the proofs of Lemma \ref{leem1} and Theorem \ref{as_law} the arguments were provided conditionally on $X_1,\cdots,X_n,$ it  is quite clear that the conclusion of Theorem \ref{as_law} remains true if the $\langle X_{i},\gamma\rangle$'s are everywhere replaced by the $\langle X_{i},\gamma\rangle_n.$

b) Similarly, all but one of the arguments in the proof of Theorem $\ref{altern}$ applies with the $\langle X_{i},\gamma\rangle_n$'s. It only remains to investigate the counterpart of $ Q_{2n} (\widetilde \gamma)$ that was the leading term in $ Q_{n} (\widetilde \gamma).$
For this purpose, note that for any $\gamma,$
$
\langle X_i ,\gamma\rangle_n = \langle X_i ,\gamma\rangle + \langle X_i ,\Delta_{n,\gamma}\rangle
$
where
$$
\Delta_{n,\gamma} (t) = \sum_{k=1}^p\gamma_k[\widehat\psi_k (t) - \psi_k (t) ],\qquad t\in[0,1].
$$
For an integral operator $ (\Psi v)(t) =\int \psi(t,s)v(s) ds$ with $\int \int \psi^2(t,s)dt ds <\infty,$ consider the operator norm $\| \Psi \|_{S}$ defined by $\| \Psi \|^2_{S}= \int \int \psi^2(t,s)dt ds$. Under Assumption \ref{D}-(a) and the moment assumption on $\|X\|,$
$$
\|\widehat \Gamma - \Gamma \|_{S} = O_{\mathbb{P}}(1/\sqrt{n}),
$$
see for instance Bosq (2000) or  Horv\'{a}th and Kokoszka (2012).
Next, by the Cauchy-Schwarz inequality, Lemma 4.3 in Bosq (2000) or Lemma 2.3 in Horv\'{a}th and Kokoszka (2012), and the fact that the spectral norm of the operator $\widehat \Gamma - \Gamma $ is less than  or equal to  $\|\widehat \Gamma - \Gamma \|_{S}$,
$$
\int_{[0,1]} \Delta_{n,\gamma}^2 (t) dt \leq   \left[\sum_{k=1}^p\gamma_k^2 \right] \sum_{k=1}^p \| \widehat\psi_k - \psi_k \|^2 \leq p\frac{8}{\varsigma_p^2} \|\widehat \Gamma - \Gamma \|_{S}^2,
$$
where $\varsigma_p = \min_{1\leq j \leq p} (\lambda_{j} - \lambda_{j+1} ).$  Then the lower bound for the spacing between the eigenvalues implies
$$
\sup_{\gamma\in\mathcal{S}^p}\int_{[0,1]} \Delta_{n,\gamma}^2 (t) dt \leq    c p^{2\eta+1} \|\widehat \Gamma - \Gamma \|_{S}^2,
$$
for some constant $c>0$. One can deduce that
$$
\sup_{\gamma\in\mathcal{S}^p}\max_{1\leq i\leq n}\left|\langle X_i ,\gamma\rangle_n - \langle X_i ,\gamma\rangle \right|\leq
\max_{1\leq i\leq n} \| X_i\| c^{1/2}p^{\eta+1/2} \|\widehat \Gamma - \Gamma \|_{S} = O_{\mathbb{P}}(p^{\eta+1/2} \ln n /\sqrt{n}),
$$
where for the last equality we used the condition $\mathbb{E}[\exp(\varrho\|X\|)]<\infty$
to deduce that $\max_{1\leq i\leq n} \| X_i\| = O_{\mathbb{P}}(\ln n).$
Let $b_n\downarrow 0$ such that $b_n \sqrt{n}/[p^{\eta+1/2} \ln n] \rightarrow\infty$
and define the event $$\mathcal{E}_n = \{\sup_{\gamma\in\mathcal{S}^p}\max_{1\leq i\leq n}|\langle X_i ,\gamma\rangle_n - \langle X_i ,\gamma\rangle| \leq b_n\}$$ so that $
\mathbb{P}(\mathcal{E}^c_n )\rightarrow 0.
$
On the set $\mathcal{E}_n,$  for any $\gamma\in\mathcal{S}^p$ and $t\in\mathbb{R}$ we can write
$$
\widehat F_{\gamma,n}(t)= \frac{1}{n} \sum_{i=1}^n
\mathbb{I}_{\{\langle X_{i},\gamma\rangle_n\leq t  \}}
\leq \frac{1}{n} \sum_{i=1}^n
\mathbb{I}_{\{\langle X_{i},\gamma\rangle\leq t +b_n \}}= F_{\gamma,n}(t+b_n),
$$
and similarly,
$
\widehat F_{\gamma,n}(t)\geq  F_{\gamma,n}(t -  b_n).
$
One can deduce that on $\mathcal{E}_n$,
\begin{multline*}
\left| \widehat F_{\widetilde \gamma,n}(\langle X_{i},\!\widetilde \gamma\rangle_n)- \widehat F_{\widetilde \gamma,n}(\langle X_{j},\!\widetilde \gamma\rangle_n) \right|\\ \leq \max\{  \left|F_{\widetilde \gamma,n}(\langle X_{i},\!\widetilde \gamma\rangle\!+\!b_n)\!- \! F_{\widetilde \gamma,n}(\langle X_{j},\!\widetilde \gamma\rangle \!-\!b_n)\right|,\;\; \left|F_{\widetilde \gamma,n}(\langle X_{i},\!\widetilde \gamma\rangle\!-\!b_n)\!- \! F_{\widetilde \gamma,n}(\langle X_{j},\!\widetilde \gamma\rangle \!+\!b_n) \right|\}.
\end{multline*}
On the other hand,
\begin{multline*}
\left|F_{\widetilde \gamma,n}(\langle X_{i},\!\widetilde \gamma\rangle\!+\!b_n)\!- \! F_{\widetilde \gamma,n}(\langle X_{j},\!\widetilde \gamma\rangle \!-\!b_n)\right| \leq \left|F_{\widetilde \gamma,n}(\langle X_{i},\!\widetilde \gamma\rangle\!+\!b_n) - F_{\widetilde \gamma}(\langle X_{i},\!\widetilde \gamma\rangle\!+\!b_n)\right|\\
+ \left|F_{\widetilde \gamma}(\langle X_{i},\!\widetilde \gamma\rangle\!+\!b_n) - F_{\widetilde \gamma}(\langle X_{i},\!\widetilde \gamma\rangle\!-\!b_n)\right|
+\left|F_{\widetilde \gamma,n}(\langle X_{i},\!\widetilde \gamma\rangle\!-\!b_n) - F_{\widetilde \gamma}(\langle X_{i},\!\widetilde \gamma\rangle\!-\!b_n)\right|\\
\leq 2\sup_{t\in\mathbb{R}} \left|F_{\widetilde \gamma,n}(t) - F_{\widetilde \gamma}(t)\right|
+ 2 b_n \sup_{t\in\mathbb{R}}f_{\widetilde \gamma}(t)\\
=O_{\mathbb{P}}(n^{-1/2} + b_n) = O_{\mathbb{P}}(b_n).
\end{multline*}
From this and the  Lipschitz condition on $K,$ one can deduce that
$$
\left|K_h(\widehat F_{\widetilde \gamma,n}(\langle X_{i},\widetilde \gamma\rangle_n)\!- \!\widehat F_{\widetilde \gamma,n}(\langle X_{j},\widetilde \gamma\rangle_n)) - K_h( F_{\widetilde \gamma,n}(\langle X_{i},\widetilde \gamma\rangle)\!- \!F_{\widetilde \gamma,n}(\langle X_{j},\widetilde \gamma\rangle))\right| =
O_{\mathbb{P}}(b_nh^{-1}).
$$
Let $\widehat  Q_{2n} (\widetilde \gamma)$ be defined like $ Q_{2n} (\widetilde \gamma)$ but with $\langle X_{i},\widetilde \gamma\rangle$'s replaced by $\langle X_{i},\widetilde \gamma\rangle_n$'s. One can deduce from above
$$
\left|\widehat  Q_{2n} (\widetilde \gamma )-   Q_{2n} (\widetilde \gamma)\right|\leq  O_{\mathbb{P}}(b_nh^{-2}) \left[\frac{1}{n}\sum_{1\leq i\leq n}\|\delta(X_{i})\| \right]^2 = o_{\mathbb{P}}(1),
$$
provided $b_n \sqrt{n}/[p^{\eta+1/2} \ln n] \rightarrow\infty$ and $b_n=o(h^2).$ The conclusion follows.
\end{proof}

\bigskip

\bigskip

\begin{proof}[Proof of Theorem \ref{bbot}]
Consider the event $\mathcal{A}_n = \{ \max_{1\leq i \leq n} \|U_i\| \leq M \}$ with $M=n^{1/4-a} $ for some small $a.$ Assumption \ref{D}-(a)  guarantees  $\mathbb{P}(\mathcal{A}_n^c)\rightarrow 0.$
We define
\begin{equation*}\label{hn_here2}
h^b_{ i, j} = \frac{\zeta_{i} \; \zeta_{j}   }{n(n-1)h} C_{n,ij},
\end{equation*}
where $$C_{n,ij}=
\frac{\langle U_i \mathbb{I}_{\{ \| U_i \| \leq M \}}
, U_j \mathbb{I}_{\{ \| U_j \| \leq M \}}\rangle }{M^{2}} K_{h}\left( F_{\gamma,n}(\langle x_{i},\gamma\rangle)- F_{\gamma,n}(\langle x_{j},\gamma\rangle) \right).$$
Let $Q_{n}^b\left(\gamma \right)$ be the bootstrap version of $Q_{n}\left(\gamma \right),$ and let
\begin{equation*}
Q_{M,n}^b\left(\gamma \right) =\frac{1}{n(n-1)}\sum\limits_{1\leq i\neq
j\leq n}h^b_{ i, j},\quad \gamma\in\mathcal{S}^p.
\end{equation*}
Note that for any $t>0$
\begin{equation}\label{zaer}
\mathbb{P}\left[ \sup_\gamma \left|M^{-2} Q_{n}^b\left(\gamma \right) - Q_{M,n}^b\left(\gamma \right)  \right|> t \mid U_1,X_1,\cdots,U_n,X_n\right]\leq \mathbb{P}(\mathcal{A}_n^c)\rightarrow 0.
\end{equation}
We define the quantities $A_n^b$, $B_n^b,$ $C_n^b$ and $D_n^b$ as in (\ref{norms_glz1})-(\ref{norms_glz2}) with $h_{i,j}$ replaced by $h_{i,j}^b$ and the expectations replaced by the conditional expectations given $(U_1,X_1),\cdots,(U_n,X_n)$. \color{black} Since the variables $\zeta_i$ are bounded, it is easy to check that  the same upper bounds as in Lemma \ref{upp_bds} could be derived on the event $\mathcal{A}_n$. Moreover,  the condition $\mathbb{E}(U\mid X)=0,$ a.s. was not required for deriving that bounds. Hence the arguments are valid both under the null hypothesis and the alternative hypotheses.  \color{black} Then equation (\ref{zaer}) and the exponential inequality from Lemma \ref{gine_zinn} applied as in Lemma \ref{leem1} yield, for any $C>0,$
\begin{equation}\label{boot_inq}
\mathbb{P}\left[ \sup_\gamma \left| Q_{n}^b\left(\gamma \right)  \right|> Cp\ln n /nh^{1/2} \mid U_1,X_1,\cdots,U_n,X_n\right] \rightarrow 0\;\;\text{ in probability }.
\end{equation}
The second part of Lemma \ref{leem1} follows from similar arguments.

Next, let us consider the case where the null hypothesis $H_0$ holds true.
Let $\widehat \gamma_n^b$ be a least favorable direction $\gamma$ for $H_0$ obtained as in equation (\ref{bet}) using a bootstrap sample.
We deduce that $\mathbb{P}( \widehat \gamma_n^b \neq \gamma_0^{(p)}\mid U_1,X_1,\cdots,U_n,X_n) \rightarrow 0,$ in probability. It remains to
reconsider the steps of Theorem \ref{as_law} with $\widetilde Z_i$ replaced by $\zeta_i$ and $K_{h,ij}(\gamma_0^{(p)})$  by $K_{h,ij}^b(\gamma_0^{(p)}) = \langle U_i,U_j \rangle K_{h,ij}(\gamma_0^{(p)})$. Consequently $ W_{ij} $ becomes
$ W_{ij}^b = n^{-1}(n-1)^{-1}h^{-1}\zeta_i\zeta_j K_{h,ij}^b(\gamma_0^{(p)}), $
$\sigma_{ij}^2$ is replaced by $ (\sigma_{ij}^b)^2 = [K_{h,ij}^b(\gamma_0^{(p)})]^2/[n^2(n-1)^2 h^2] $ and $\sigma(n)^2 $ is now $\sigma^b( n)^2 = 2 \sum_{i\neq j} (\sigma_{ij}^b)^2$.
Let us define the set
$
\mathcal{E}_{1n} = \{ \sigma^b( n)^2 \geq \underline{\sigma}^2 \}
$
where $\underline{\sigma}^2$ is the lower bound in Assumption \ref{D}-(c)-(i). Since $\lim_n \mathbb{E}( \sigma^b( n)^2)\geq 2\underline{\sigma}^2$ and
the variance of $ \sigma^b( n)^2$ tends to zero, we deduce that $\mathbb{P}(\mathcal{E}_{1n}^c)\rightarrow 0.$
Next, note that
$\lim_n \mathbb{E}( \sigma^b( n)^2)\leq  C$
where $C$ is some constant that depends on $C_2$ and $\nu$ defined in Assumption \ref{D}-(c)-(ii). Thus, if
$
\mathcal{E}_{2n} = \{ \sigma^b( n)^2  \leq 2C \},
$
since the variance of $ \sigma^b( n)^2$ tends to zero, we have $\mathbb{P}(\mathcal{E}_{2n}^c)\rightarrow 0.$
On $\mathcal{E}_{1n} \cap \mathcal{E}_{2n} ,$ Conditions 1 and 2 of Theorem 5.1 of de Jong (1987) are clearly satisfied, given $(U_1,X_1),\cdots,(U_n,X_n)$.
For checking Condition 3, let $\mathcal{K}^b$ denote the matrix with generic element $\mathcal{K}^b_{ij} = K_{h,ij}^b(\gamma_0^{(p)})/[n(n-1)h]$ if $i\neq j$ and $\mathcal{K}^b_{ij} =0$ otherwise.
Recall that $\mathbb{E}_n$ stands for the conditional expectation given $X_1,\cdots,X_n$ and note that $\mathbb{E}_n(\|U_i\|\|U_j\|)\leq
\mathbb{E}^{1/2}(\|U_i\|^2\mid X_i) \mathbb{E}^{1/2}(\|U_j\|^2\mid X_j)\leq C_2^{2/\nu}.$  Using the conditional independence between any $U_i$ and the rest of the sample, for any $w\in\mathbb{R}^n$ with $\|w\|=1,$
\begin{eqnarray*}
\mathbb{E}_n\left\Vert \mathcal{K}^b w\right\Vert ^{2} &\leq &
\color{black}\frac{1}{h^2n^2(n-1)^2} \sum_{i=1}^{n}\mathbb{E}(\|U_i\|^2\mid X_i) \mathbb{E}_n\left(
\sum_{j=1,j\neq i}^{n} \|U_j\|  K_{h,ij}(\gamma_0^{(p)})  |w_j| \right)^2 \color{black}\\
&& \qquad\qquad \quad\qquad\qquad \quad\qquad\qquad \quad \text{[Cauchy-Schwarz inequality]}\\
&\leq & \frac{C_2^{4/\nu}}{h^2n^2(n-1)^2} \sum_{i,j,k=1}^{n} K_{h,ij}(\gamma_0^{(p)}) K_{h,ik}(\gamma_0^{(p)}) |w_{j}w_k| \notag \\
&\leq & C_2^{4/\nu}K^2 (0) \frac{1}{h^2n(n-1)^2} \sum_{j,k=1}^{n} |w_{j}w_k| \\
&\leq & \frac{C_3}{n^2} \frac{1}{nh^2},\qquad \qquad\qquad \qquad \qquad \qquad\text{[Cauchy-Schwarz inequality]}
\end{eqnarray*}
where $C_3>0$ is some constant. We deduce that $\mathbb{E}\left\Vert \mathcal{K}^b w\right\Vert ^{2}=o(n^{-2}).$ Let $| \|  \mathcal{K}^b \| |_2$ denote the spectral norm of $\mathcal{K}^b$ and  define
$
\mathcal{E}_{3n} = \{ | \|  \mathcal{K}^b \| |_2 \leq 1/n \}.
$
We deduce from the above that
$\mathbb{P}(\mathcal{E}_{3n}^c)\rightarrow 0,$ and  thus the conditional probability of $\mathcal{E}_{3n}^c$ given $ (U_1,X_1),\cdots,(U_n,X_n)$ also tends to zero.
Condition 3 in Theorem 5.1 of de Jong (1987) is clearly satisfied on $\mathcal{E}_{3n}$ and hence de Jong's CLT could be applied, \color{black}given $ (U_1,X_1),\cdots,(U_n,X_n),$ \color{black} on the event $\mathcal{E}_{n} =\mathcal{E}_{1n} \cap \mathcal{E}_{2n}\cap \mathcal{E}_{3n}$ which has a probability tending to one.
Finally, it remains to note that equation (\ref{var_esty}) holds on $\mathcal{E}_{n},$ \color{black} given $ (U_1,X_1),\cdots,(U_n,X_n).$ \color{black} The arguments for the test statistic built with the estimated FPC basis (that is not changed in the bootstrap procedure) are similar.

Let us now consider the case where the null hypothesis does not hold true. Let $\widehat v_n^{b,2} (\gamma)$ be the variance estimator obtained by bootstrapping. Since $\zeta_i^2\geq (\sqrt{5} - 1)^2/4$ and $\langle U_i^b, U_j^b \rangle^2 > \langle U_i^b, U_j^b \rangle^2 (\sqrt{5} - 1)^4/16,$ by the second part of Lemma \ref{leem1},
\begin{equation*}
\sup_{\gamma} \{1/\widehat v_n^{b,2} (\gamma)\} = O_{\mathbb{P}}(1).
\end{equation*}
From this and the bound in equation (\ref{boot_inq}), we deduce
$
T_n^b = O_{\mathbb{P}}(p\ln n )
$
given  the original sample of $(U,X),$ in probability. That means that for any $C>0,$ $\mathbb{P} ( |T_n^b|>C\mid U_1,X_1,\cdots,U_n,X_n)\rightarrow 0,$ in probability.
Consequently, $T_n^b/\alpha_n = o_{\mathbb{P}}(1)$ given the original sample of $(U,X),$ in probability. In particular, this implies that for any $a\in(0,1)$, $z^b_{1-a,n}/\alpha_n = o_{\mathbb{P}}(1).$
On the other hand, we proved in Theorem \ref{altern} that  $T_n$ tend to infinity at the rate  $O_{\mathbb{P}}(nh^{1/2} r_n^2),$ which implies that $T_n/\alpha_n$ tends to infinity, in probability. Thus, the second statement of Theorem \ref{bbot} follows. Again, the arguments for the test statistic built with the estimated FPC basis  are similar.
\color{black}
\end{proof}

\bigskip

\bigskip

\color{black}
\noindent \textbf{Lemma A.}
Under the conditions of Lemma 3.1, for any $p\geq 1$ and $\gamma \in\mathcal{S}^p,$
$$
\mathbb{E}(U\mid \langle X,\gamma\rangle ) = \mathbb{E}(U\mid F_\gamma (\langle X,\gamma\rangle) ) \;\;a.s.
$$

\medskip

\begin{proof}[Proof of Lemma A]
It suffice to prove that for any $U$ as in Lemma 3.1 and any $Z$ a real-valued random variable with distribution function $F,$ we have
\begin{equation}\label{cdddd223}
\mathbb{E}(U\mid Z) = \mathbb{E}(U\mid F(Z)) \quad a.s.
\end{equation}
For any random variable $Z$ (not necessarily continuous) with distribution function $F$ we have $\mathbb{P}(Q( F(Z) )\neq Z) = 0$ where $Q(t)=\inf \{y : F(y)\geq t \}, \forall 0<t<1.$ (See, for instance Proposition 3, Chapter 1 in Shorack and Wellner (1986).) From this and the properties of the  conditional expectations,  for any bounded measurable function $g$ we have
\begin{multline*}
\mathbb{E}(g(Z)\mathbb{E}(U\mid Z))= \mathbb{E}(g(Z)U) \\
=  \mathbb{E}(g(Q( F(Z) ))U) = \mathbb{E}(g(Q( F(Z) ))\mathbb{E}(U\mid F(Z))) = \mathbb{E}(g(Z)\mathbb{E}(U\mid F(Z))).
\end{multline*}
Since $\mathbb{E}(U\mid F(Z))$ is a measurable function of $Z,$ the almost sure uniqueness of the conditional expectation implies
the equality in equation (\ref{cdddd223}).
\end{proof}

\bigskip


Now, let us provide some theoretical justification for the sequential numerical algorithm for searching an approximation of the  direction $\widehat \gamma_n$  defined in equation (\ref{bet}). A justification is necessary only in the case of alternative hypotheses to ensure that a vector $\widetilde \gamma $ like in Theorem \ref{altern}-(e) exists. No additional justification is required for the asymptotic results on the null hypothesis since Lemma \ref{beta} still holds with $\widehat \gamma_n$ replaced by the solution obtained through our sequential numerical algorithm.

It follows from the proof of Lemma \ref{lem1}-(B) that, if
$$
\mathbb{P}\left(  \mathbb{E}\left[U\mid \langle X,\psi_1 \rangle,\cdots,\langle X,\psi_p \rangle  \right] =0\right) <1,
$$
then the set
$$
\mathcal{A}_p = \{\gamma\in\mathcal{S}^p : \mathbb{E}(U \mid \langle X, \gamma \rangle )=0 \,\, a.s.\, \}
$$
has Lebesgue measure zero on the unit hypersphere  $\mathcal{S}^p$  and is not dense. See also Lemma 2.1 in Patilea, Saumard and Sanchez (2012).

Let us next investigate what could happen when searching a direction in $\mathcal{S}^{p+1}$ using a direction in $\mathcal{S}^{p} $ and one-dimensional optimization. For a vector $v\in\mathbb{R}^p,$ let $(v,1)\in\mathbb{R}^{p+1}$ denote the vector obtained by adding an additional component equal to 1. Moreover,  let $\mathbf{0}_p$ denote the null vector in $\mathbb{R}^p.$ We prove in the following result that if $\gamma\not\in \mathcal{A}_p,$ then there exists at most a finite number of angles $\theta\in[0,\pi)$ such that $$\cos\theta \cdot (\gamma,0)+ \sin\theta \cdot (\mathbf{0}_p,1)\in \mathcal{A}_{p+1}\subset \mathcal{S}^{p+1}. $$ Moreover, if $\gamma\in \mathcal{A}_p$ and
$\mathbb{E}(U\mid \langle X,\gamma \rangle, \langle X,\psi_{p+1} \rangle)\neq 0$ (for instance, this happens when  $\mathbb{E}(U\mid \langle X,\psi_{p+1} \rangle)\neq 0$), one could draw the same conclusion on the cardinality of the set of $\theta$ such that $\cos\theta \cdot (\gamma,0)+ \sin\theta \cdot (\mathbf{0}_p,1)\in \mathcal{A}_{p+1}.$

\bigskip

\noindent \textbf{Lemma B.}
Assume that $\mathbb{E}\|U\| <\infty,$ $\mathbb{E}(U)=0$ and there exists $s>0$ such that $\mathbb{E}(\|U\|\exp(s\|X\|)) <\infty.$ If either $\gamma\in \mathcal{S} ^p\setminus \mathcal{A}_p,$ or $$\gamma\in \mathcal{A}_p \quad \text{ and } \quad \mathbb{E}(U\mid \langle X,\gamma \rangle, \langle X,\psi_{p+1} \rangle)\neq 0,$$
then the set of $\theta\in[0,\pi)$ such that $$\cos\theta \cdot (\gamma,0)+ \sin\theta \cdot (\mathbf{0}_p,1)\in \mathcal{A}_{p+1}\subset \mathcal{S}^{p+1} $$
is empty or finite.

\bigskip

\begin{proof}[Proof of Lemma B]
If $\gamma\not\in \mathcal{A}_p,$ then $\mathbb{E}(U\mid \langle X,\gamma \rangle)\neq 0.$ In particular, we have  $\mathbb{E}(U\mid \langle X,\gamma \rangle, \langle X,\psi_{p+1} \rangle)\neq 0$ and thus the arguments below apply for both cases in the statement of Lemma B.
Now, if $\theta^*\in (0,\pi)$ is such that
\begin{equation}\label{thetoo}
\mathbb{E}(U\mid \cos \theta^* \cdot  \langle X,\gamma \rangle + \sin\theta^* \cdot \langle X,\psi_{p+1} \rangle)= 0,
\end{equation}
then, for any $b\in \mathbb{R},$ $\theta^*$ is a zero of the analytic function
\begin{equation}\label{analytic1}
\theta\mapsto \zeta_b(\theta) = \mathbb{E}\left[U \exp( b\{ \cos \theta\cdot  \langle X,\gamma \rangle + \sin\theta \cdot \langle X,\psi_{p+1} \rangle \}) \right],\qquad \theta\in(0,2\pi).
\end{equation}
(Since $X$ could be rescaled conveniently if necessary, we consider $\mathbb{E}(\|U\|\exp(s\|X\|)) <\infty$ for some $s\geq 1$. This guarantees that the expectation in the  display (\ref{analytic1}) is well defined.) Since $\mathbb{E}(U\mid \langle X,\gamma \rangle, \langle X,\psi_{p+1} \rangle)\neq 0,$ there exists $b$ such that the analytic function $\zeta_b(\cdot)$ is not the null function. Otherwise, by the unicity of the Fourier Transform, necessarily  $\mathbb{E}(U\mid \langle X,\gamma \rangle, \langle X,\psi_{p+1} \rangle)= 0$ a.s. Next, the zeros of a non-null analytic function in a bounded interval are necessarily in finite number. Hence, the set of $\theta^*$ satisfying equation (\ref{thetoo}) is necessarily finite.
\end{proof}
\color{black}


\newpage

\begin{center}
{\small ADDITIONAL REFERENCES }
\end{center}


\begin{enumerate}
\item {\footnotesize \textsc{Bosq, D.} (2000). \textsl{Linear Processes in Function Spaces: Theory and Applications}. Lecture Notes in Statistics (v. 149). Springer-Verlag, New-York.}
    \item {\footnotesize \textsc{Guerre, E., and Lavergne, P.} (2005).
Data-driven rate-optimal specification testing in regression models. \textsl{%
Ann.  Statist.} \textbf{33}, 840--870. }

\item {\footnotesize \textsc{Shorack, G.R., and Wellner, J.A.} (1986). \emph{Empirical Processes with Applications to Statistics.} John Wiley \& Sons.}

\end{enumerate}

\end{document}